\providecommand{\U}[1]{\protect\rule{.1in}{.1in}}
\newtheorem{theorem}{Theorem}[subsection]
\newtheorem{lemma}[theorem]{Lemma}
\newtheorem{proposition}[theorem]{Proposition}
\newtheorem*{theorem*}{Theorem}
\theoremstyle{definition}
\newtheorem{definition}[theorem]{Definition}
\newenvironment{example}
  {\pushQED{\qed}\examplex}
  {\popQED\endexamplex}
\theoremstyle{remark}
\newenvironment{remark}
  {\pushQED{\qed}\remarkx}
  {\popQED\endremarkx}
\thanks{}
\numberwithin{equation}{section}
\def\@tocline#1#2#3#4#5#6#7{\relax
  \ifnum #1>\c@tocdepth 
  \else
    \par \addpenalty\@secpenalty\addvspace{#2}%
    \begingroup \hyphenpenalty\@M
    \@ifempty{#4}{%
      \@tempdima\csname r@tocindent\number#1\endcsname\relax
    }{%
      \@tempdima#4\relax
    }%
    \parindent\z@ \leftskip#3\relax \advance\leftskip\@tempdima\relax
    \rightskip\@pnumwidth plus4em \parfillskip-\@pnumwidth
    #5\leavevmode\hskip-\@tempdima
      \ifcase #1
       \or\or \hskip 1.5 em \or \hskip 2em \else \hskip 3em \fi%
      #6\nobreak\relax
    \hfill\hbox to\@pnumwidth{\@tocpagenum{#7}}\par
    \nobreak
    \endgroup
  \fi}
\begin{document}
\newcommand{\R}{{\mathbb R}}
\newcommand{\C}{{\mathbb C}} 
\newcommand{\T}{{\mathbb T}}
\newcommand{\D}{{\mathbb D}}
\renewcommand{\P}{\mathbb P}

\newcommand{\Aa}{{\mathcal A}}
\newcommand{\Ii}{{\mathcal I}}
\newcommand{\Jj}{{\mathcal J}}
\newcommand{\Nn}{{\mathcal N}}
\newcommand{\Ll}{{\mathcal L}}
\newcommand{\Tt}{{\mathcal T}}
\newcommand{\Gg}{{\mathcal G}}
\newcommand{\Dd}{{\mathcal D}}
\newcommand{\Cc}{{\mathcal C}}
\newcommand{\Oo}{{\mathcal O}}

\newcommand{\pr}{\operatorname{pr}}
\newcommand{\bla}{\langle \! \langle}
\newcommand{\bra}{\rangle \! \rangle}
\newcommand{\blq}{[ \! [}
\newcommand{\brq}{] \! ]}
 \newcommand{\into}{\mathbin{\vrule width1.5ex height.4pt\vrule height1.5ex}}

\title[Holomorphic Jacobi Manifolds and Holomorphic Contact Groupoids]{Holomorphic Jacobi Manifolds \\ and Holomorphic Contact Groupoids}
\author{Luca Vitagliano}
\address{DipMat, Universit\`a degli Studi di Salerno, via Giovanni Paolo II n${}^\circ$ 123, 84084 Fisciano (SA) Italy.}
\email{lvitagliano@unisa.it}

\author{A\"issa Wade}
\address{Department of Mathematics, Penn State University, University Park, State College, PA 16802, USA.}
\email{wade@math.psu.edu}

\begin{abstract}
This paper belongs to a series of works aiming at exploring generalized (complex) geometry in odd dimensions. Holomorphic Jacobi manifolds were introduced and studied by the authors in a separate paper as special cases of \emph{generalized contact bundles}. In fact,  generalized contact bundles are nothing but  odd dimensional analogues of generalized complex manifolds.
In the present paper, we solve the integration problem for holomorphic Jacobi manifolds by proving that they integrate to holomorphic contact groupoids.  A crucial tool in our proof is what we call the \emph{homogenization scheme}, which allows us to identify holomorphic Jacobi manifolds with homogeneous holomorphic Poisson manifolds and holomorphic contact groupoids with homogeneous complex symplectic groupoids.
\end{abstract}
\maketitle

\noindent MSC: Primary 53Dxx; Secondary 53D10, 53D15, 53D17,  53D18, 53D35.

\bigskip

\noindent Keywords: Homogeneous Poisson structure, holomorphic Poisson structure, holomorphic Jacobi structure,   Spencer operators, Lie groupoid, multiplicative tensors.

\tableofcontents

\section{Introduction}

Generalized complex manifolds were introduced by Hitchin in  \cite{H2003} and they were unfolded  by Gualtieri in \cite{G2011}. Generalized complex manifolds must be even dimensional, so there arises a natural question:  is there an odd dimensional analogue? Several answers to this question have already been given \cite{IW2005,V2008,PW2011,W2012, S2015,AG2015}. Recently, elaborating on a previous work by Iglesias and the second author herself \cite{IW2005}, we proposed the notion of generalized contact bundle, which looks well motivated on conceptual grounds  \cite{VW2016}. 

There is a close relationship between generalized complex geometry and Poisson geometry. Namely, every generalized complex manifold is a Poisson manifold. Additionally, a holomorphic Poisson structure can be seen as a generalized complex structure (of special type) \cite{LSX2008}. Notice that the theory of  holomorphic Poisson structures has recently attracted a great deal of interest due to this connection (see, for instance, \cite{B2013, BX2015,  BCG2007, CGYS2015, H2012, LSX2008, SX2007}).

Similarly, every generalized contact bundle is a Jacobi bundle. Recall that a \emph{Jacobi bundle} is a line bundle with a Lie bracket on its sections, which is also a first order bi-differential operator.  In \cite{VW2016b} we also introduced holomorphic Jacobi manifolds, i.e.~complex manifolds $X$ equipped with a holomorphic line bundle $L \to X$ and a Lie bracket on the sheaf of holomorphic sections of $L$ which is also a first order bi-differential operator. Holomorphic Jacobi structures can be seen as generalized contact structures (of special type).

Holomorphic Jacobi manifolds can also be seen as homogeneous holomorphic Poisson-manifolds. Recall that a \emph{homogeneous holomorphic Poisson structure} on a complex manifold $X$ is a holomorphic Poisson bivector $\Pi$ together with a holomorphic vector field $H,$ called the homogeneity vector field, such that ${\mathcal L}_H \Pi = - \Pi$.  
 In \cite{VW2016b}, we showed that, as in the real case, given a complex manifold $X$ equipped with a holomorphic line bundle $L \to X$,  every holomorphic Jacobi structure $J$ on  
 $(X, L)$ determines a canonical homogeneous holomorphic Poisson structure on its slit complex dual $\widetilde{X}= L^* \smallsetminus 0$, called the \emph{Poissonization} of $J$. Here $0$ is the image of the zero section.
Conversely,  every homogeneous holomorphic Poisson manifold is the Poissonization of a canonical holomorphic Jacobi manifold around a non-singular point of the homogeneity vector. 

Basic examples of holomorphic Jacobi manifolds are provided by projective spaces over duals of complex Lie algebras and holomorphic contact manifolds. Holomorphic contact geometry is very different from real one, its study requires the use of different techniques \cite{Blair}, and the passage from the real to the complex case is not straightforward. For instance real contact manifolds are odd dimensional, while holomorphic contact manifolds have necessarily even real dimension. 

As we observed in \cite{VW2016b}, there is also a big difference between holomorphic Jacobi structures and holomorphic Poisson structures. Namely, as already mentioned, every holomorphic Poisson manifold is a generalized complex manifold. However, a holomorphic Jacobi structure on a complex manifold  $X=(M, j)$ gives rise to a generalized contact structure which is not on $M$ but rather on a circle bundle over $M$.  This observation generalizes the case of holomorphic contact manifolds which was considered by Kobayashi back in 1959 \cite{K1959}. Indeed, he noticed that given a holomorphic contact manifold  $X=(M, j)$,  there exists a principal $U(1)$-bundle over $M$  which is endowed with a natural real contact structure.

Various studies have been  conducted on the integration problem for real Jacobi structures (see \cite{CS2015, CZ2007} and references therein) and they show that \emph{Jacobi manifolds integrate to contact groupoids}. In this paper, we investigate the integration problem for holomorphic Jacobi  manifolds and show that they integrate to holomorphic contact groupoids. Recall that the analogous question for holomorphic Poisson manifolds has been answered in \cite{LSX2009}: \emph{holomorphic Poisson manifolds integrate to complex symplectic groupoids} \cite{LSX2009}.

 An important ingredient in our study is a general \emph{homogenization scheme}, i.e.~we establish a correspondence between calculus on a line bundle $L \to M$ and ``homogeneous calculus" on $\widetilde{M} = L^\ast \smallsetminus 0$. When applied to a Jacobi structure, the homogenization scheme produces its Poissonization. In \cite{CZ2007}, Crainic and Zhu proved that real Jacobi manifolds integrate to contact groupoids using the homogenization scheme but they only consider the case of trivial Jacobi bundles. Recently, Crainic and Salazar \cite{CS2015}, using their theory of Spencer operators \cite{CSS2015}, discuss the general case of Jacobi manifolds with generic Jacobi line bundles. But they do not use the homogenization scheme. Our strategy is a combination of those in \cite{CZ2007} and \cite{CS2015} (and \cite{LSX2009}) that we find particularly efficient. Namely, first we use Spencer operators to give a new proof for the following result established in \cite{LSX2009}: integrable Poisson-Nijenhuis manifolds are in one-to-one correspondence with source-simply connected symplectic-Nijenhuis groupoids. Using this result, we show that such a correspondence restricts to the smaller class of integrable homogeneous Poisson-Nijenhuis manifolds. Namely, integrable homogeneous Poisson-Nijenhuis manifolds are in one-to-one correspondence with source-simply connected homogeneous symplectic-Nijenhuis groupoids. Finally, we use this and the homogenization scheme to prove our main result (see Theorem \ref{main thm} for a precise statement):
 
 \begin{theorem*}
Integrable holomorphic Jacobi manifolds are in one-to-one correspondence with source-connected and simply connected holomorphic contact groupoids, i.e.~complex groupoids equipped with a multiplicative holomorphic contact structure.
 \end{theorem*}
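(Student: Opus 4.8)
The plan is to run the homogenization scheme in three stages: pass from holomorphic Jacobi data to homogeneous holomorphic Poisson data, integrate the latter viewed as a (homogeneous) Poisson--Nijenhuis manifold, and then de-homogenize the resulting groupoid. Concretely, to an integrable holomorphic Jacobi manifold $(X,L,J)$ I would associate its Poissonization, the homogeneous holomorphic Poisson manifold $(\widetilde X,\Pi,H)$ on the slit dual $\widetilde X = L^\ast\smallsetminus 0$, with $\Ll_H\Pi=-\Pi$. Recalling that a holomorphic Poisson bivector is the same datum as a real Poisson bivector together with a compatible complex structure $N=j$ (so that $(\widetilde X,\Pi,N)$ is a Poisson--Nijenhuis manifold with $N^2=-\mathrm{id}$), the integrability of $J$ becomes the integrability of this Poisson--Nijenhuis manifold, while the homogeneity field becomes a real vector field $H$ on $\widetilde X$ with $\Ll_H N=0$ and $\Ll_H\Pi=-\Pi$.

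First I would establish, using Spencer operators, the smooth statement that integrable Poisson--Nijenhuis manifolds correspond bijectively to source-connected, source-simply-connected symplectic--Nijenhuis groupoids (a new proof of the result of Laurent-Gengoux--Sti\'enon--Xu). The idea is that a multiplicative symplectic form on a Lie groupoid $\Gg \rightrightarrows M$ is encoded infinitesimally by an infinitesimally multiplicative $2$-form on the Lie algebroid $A=T^\ast M$, a multiplicative Nijenhuis $(1,1)$-tensor is encoded by an infinitesimally multiplicative $(1,1)$-tensor, i.e.~a Spencer operator, and the Poisson--Nijenhuis compatibility conditions on $M$ translate precisely into compatibility between these two pieces of infinitesimally multiplicative data; Lie's second theorem for these structures (integrating the data on $A$ to multiplicative data on the source-simply-connected $\Gg$) then yields the bijection, with inverse given by restriction/differentiation at the units.

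Next I would show that this correspondence restricts to the homogeneous setting. The homogeneity vector field $H$, being complete and compatible with both $\Pi$ and $N$, can be regarded as an infinitesimal automorphism of $A=T^\ast M$ preserving its infinitesimally multiplicative data, and hence lifts (by functoriality of integration) to a complete multiplicative vector field $\widetilde H$ on $\Gg$ with $\Ll_{\widetilde H}\omega=-\omega$ and $\Ll_{\widetilde H}\Nn=0$, so that $(\Gg,\omega,\Nn,\widetilde H)$ is a source-simply-connected homogeneous symplectic--Nijenhuis groupoid; conversely, the base of such a groupoid is a homogeneous Poisson--Nijenhuis manifold, and the two assignments are mutually inverse. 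Finally, de-homogenizing --- reading off the holomorphic contact structure on the line-bundle quotient of $\Gg$ determined by the flow of $\widetilde H$, exactly as in the homogeneous-symplectic-to-contact reduction at the manifold level, but now in a multiplicative fashion --- converts $(\Gg,\omega,\Nn,\widetilde H)$ into a complex groupoid carrying a multiplicative holomorphic contact structure, i.e.~a holomorphic contact groupoid, whose base is the original holomorphic Jacobi manifold. Composing the three bijections gives the theorem.

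The main obstacle I expect is the homogeneous-lifting step: one must verify that $H$ integrates to a genuinely multiplicative and \emph{complete} vector field $\widetilde H$ on $\Gg$ realizing the exact eigenvalue relation $\Ll_{\widetilde H}\omega=-\omega$, and --- more delicately --- that $\widetilde H$ preserves the multiplicative Nijenhuis tensor $\Nn$, which is what makes the de-homogenized groupoid a \emph{complex} groupoid and the contact form \emph{holomorphic}. A secondary technical point is propagating the ``integrable'' hypothesis and the source-(simple-)connectedness coherently through homogenization, since replacing $X$ by $\widetilde X = L^\ast\smallsetminus 0$ alters the topology of the base; this should be handled by observing that homogenization does not change the relevant Lie algebroid other than through the evident $\R_{>0}$- (respectively $\C^\ast$-) action, and hence preserves both integrability and the homotopy type of the source fibers.
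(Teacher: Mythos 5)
Your proposal follows essentially the same route as the paper: Poissonize to a homogeneous holomorphic Poisson manifold, establish the Poisson--Nijenhuis/symplectic--Nijenhuis correspondence via Spencer operators, refine it to the homogeneous setting, and de-homogenize; the obstacle you flag (lifting the homogeneity field to a complete multiplicative vector field generating a free and proper $\mathbb R^\times$-action) is resolved in the paper exactly along the lines you suggest, by observing that the cotangent algebroid of the Poissonization is the action algebroid of the jet algebroid acting on the slit dual, so the source-simply connected integration is an action groupoid carrying a manifest principal-bundle structure (and the $\mathbb R^\times$-action is integrated element-wise by Lie's second theorem rather than by integrating the vector field). The only notable difference is that the paper factors the complex homogenization through an intermediate $U(1)$-bundle $\widehat M$ carrying a Jacobi--Nijenhuis structure $(\widehat L,\widehat J,\widehat j)$ in order to organize the chain of integrability equivalences, but this is bookkeeping rather than a different argument.
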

 
Along the way we provide a new proof of Crainic and Salazar result. A key ingredient throughout the paper is the notion of a multiplicative Atiyah tensor. Multiplicative structures on Lie groupoids were investigated by many researchers (see, e.g., \cite{BD2017} and references therein). The natural framework for multiplicative Atiyah tensors is the setting on VB-groupoids \cite{M2005}, their derivations \cite{ETV2016} and $1$-jets. Multiplicative Atiyah tensors correspond to (ordinary) multiplicative tensor under the homogenization scheme. For instance, a multiplicative contact structure can be seen as a multiplicative symplectic Atiyah form, which, in its turn correspond, under the homogenization scheme, to a homogeneous multiplicative symplectic form. In this way we study Lie groupoids equipped with multiplicative Atiyah tensors, by studying Lie groupoids equipped with ordinary multiplicative tensors which are additionally homogeneous in a suitable sense.
 
  The paper contains three main sections. In Section 2,  we review the homogenization scheme and study the homogenization of multiplicative Atiyah tensors on line bundle groupoids. In Section 3, we  review known results about the integration of Poisson manifolds and extend them to homogeneous Poisson manifolds.  In particular, we establish that (homogeneous) Poisson-Nijenhuis manifolds integrate to (homogeneous) symplectic-Nijenhuis groupoids by taking the approach of Spencer operators. Section 4 contains our main result (Theorem \ref{main thm}):  integrable holomorphic Jacobi manifolds integrate to holomorphic contact groupoids. 
 
We assume the reader is familiar with Lie groupoids, Lie algebroids, and their Lie theory. Our main reference about this material is \cite{CF2011}.

\section{The homogenization scheme}\label{Sec:hom_trick}
Recall that there is a dictionary that allows to pass from symplectic to contact geometry, and from Poisson to Jacobi geometry, in a very natural way (see for example \cite{BGG2015, V2015}). This dictionary is ultimately based on what we call the \emph{homogenization scheme}. This consists in applying in a systematic way the equivalence between the categories of line bundles  and principal $\mathbb R^\times$-bundles. Here $\mathbb R^\times$ stands for the multiplicative group of non-zero reals. In this section we carefully review this equivalence and its implications for calculus on line bundles. Let us begin with line bundle maps.

Let $L_N \to N$ and $L \to M$ be line bundles, and let $\phi_L : L_N \to L$ be a vector bundle map covering a smooth map $\phi : M \to N$. In what follows by a line bundle map we will always mean a \emph{regular} vector bundle map $\phi_L : L_N \to L$, i.e.~a vector bundle map which is an isomorphism on fibers. In particular, $\phi_L$ induces an isomorphism $L_N \simeq \phi^\ast L$, which we will often understand if this does not lead to confusion. Sections of vector bundles can be pulled-back along regular vector bundle maps. In particular, sections of a line bundle $L$ can be pulled-back along a line bundle map $\phi_L : L_N \to L$. Namely, for $\lambda \in \Gamma (L)$ we denote by $\phi_L^\ast \lambda$ (or simply $\phi^\ast \lambda$ is this does not lead to confusion) the section of $L_N$ defined by
\[
(\phi^\ast_L \lambda)_y := \phi_L |_{(L_N)_y}^{-1} (\lambda_{\phi (y)}).
\]
This agrees with the standard pull-back construction under the identification $L_N \simeq \phi^\ast L$.

	\subsection{Line bundles and principal $\mathbb R^\times$-bundles} 

Let $L \to M$ be a line bundle, and let $\widetilde M = L^\ast \smallsetminus 0$ be its slit dual bundle. We will usually denote by $p : \widetilde M \to M$ the projection. Clearly, $\widetilde M$ is a principal $\mathbb R^\times$-bundle over $M$ and every principal $\mathbb R^\times$-bundle is of this form. Denote by
\[
h : \mathbb R^\times \times \widetilde M \to \widetilde M, \quad (r, \epsilon) \mapsto h_r \epsilon := r \cdot \epsilon
\]
the action of $\mathbb R^\times$ on $\widetilde M$. We will often call $\widetilde M$ the \emph{homogenization} of $L$. The reason for this terminology will be more clear later on in this section.

If $L_N \to N$ and $L \to M$ are two line bundles and $\phi_L : L_N \to L$ is a line bundle map covering a smooth map $\phi : N \to M$, then there is an obvious line bundle map $\phi^\star : L^\ast_N \to L^\ast$, also covering $\phi$, defined on fibers as the inverse of the transpose of $\phi_L$. We denote by $\widetilde \phi : \widetilde N \to \widetilde M$ the restriction of $\phi^\star$ to the homogenization. The map $\widetilde \phi$ is $\mathbb R^\times$-equivariant and every $\mathbb R^\times$-equivariant map $\widetilde N \to \widetilde M$ arises in this way. This shows that correspondences $L \mapsto \widetilde M$, and $\phi_L \mapsto \widetilde \phi$ establish an equivalence between the category of line bundles and line bundle maps and the category of principal $\mathbb R^\times$-bundles and equivariant maps. This equivalence determines a correspondence between calculus on a line bundle $L \to M$ and calculus on its homogenization $\widetilde M$ preserving the $\mathbb R^\times$-equivariance in a suitable sense. We illustrate this phenomenon in the remaining part of this section. We begin showing that sections of $L$ correspond to certain functions on $\widetilde M$.

First of all, denote by $Z$ the restriction to $\widetilde M$ of the Euler vector field on $L^\ast$. So $Z$ is the fundamental vector field corresponding to the canonical generator $1$ in the abelian Lie algebra $\mathbb R$ of $\mathbb R^\times$. We will keep calling it the \emph{Euler vector field}. Let $f \in C^\infty (\widetilde M)$. The following two conditions are equivalent
\begin{itemize}
\item[$\triangleright$] $Z(f) = f$, and $h_{-1}^\ast f = -f$,
\item[$\triangleright$] $h_r^\ast f = r \cdot f$ for all $r \in \mathbb R^\times$.
\end{itemize} 
A function satisfying one, hence both, of the above two conditions is called a \emph{homogeneous function} (of degree $1$). Every section $\lambda \in \Gamma (L)$ can be seen as a fiber-wise linear function on $L^\ast$, and, by restriction, as a homogeneous function, denoted $\widetilde \lambda$ on $\widetilde M$. Correspondence $\lambda \mapsto \widetilde \lambda$ is one-to-one. Function $\widetilde \lambda$ will be often called the \emph{homogenization} of section $\lambda$. There is a more geometric description of this construction. Namely, let 
\[
\mathbb R_{\widetilde M} = \widetilde M \times \mathbb R \to \widetilde M
\]
be the trivial line bundle over $\widetilde M$. Then, there is a canonical line bundle map $p_{L} : \mathbb R_{\widetilde M} \to L$ covering $p : \widetilde M \to M$:
\[
\begin{array}{c}
\xymatrix{ \mathbb R_{\widetilde M} \ar[r]^-{p_L} \ar[d] & L \ar[d] \\
 \widetilde M \ar[r]^-p & M}
 \end{array}.
\]
Line bundle map $p_L$ is defined as follows. Let $(\epsilon, r) \in \widetilde M \times \mathbb R$. Then $p_L (\epsilon, r) \in L_{p(\epsilon)}$ is implicitly given by 
\[
\langle \epsilon, p_L(\epsilon, r) \rangle = r,
\]
where $\langle -,- \rangle : L^\ast \otimes L \to \mathbb R$ is the duality pairing. It is now easy to see that, for every $\lambda \in \Gamma (L)$, its homogenization $\widetilde \lambda$, regarded as a section of $\mathbb R_{\widetilde M}$, is precisely the pull-back $p_L^\ast \lambda$:
\[
\widetilde \lambda = p_L^\ast \lambda.
\]

	\subsection{Atiyah tensors and their homogenization}\label{Sec:hom_Atiyah}
	
Not only sections of $L$ correspond to homogeneous functions on $\widetilde M$, actually all (first order) calculus on $L$ correspond to a \emph{homogeneous calculus} on $\widetilde M$. To see this, notice that the building blocks of first order calculus on $L$ are \emph{derivations}  and $1$-jets of $L$. Recall that a derivation of a vector bundle $E \to M$ is an $\mathbb R$-linear operator
\[
\Delta : \Gamma (E) \to \Gamma (E)
\]
satisfying the following \emph{Leibniz rule}
\[
\Delta (f \varepsilon) = X(f) \varepsilon + f \Delta \varepsilon, \quad f \in C^\infty (M), \quad \varepsilon \in \Gamma (E),
\]
for a, necessarily unique, vector field $X \in \mathfrak X (M)$, called the \emph{symbol} of $\Delta$ and denoted by $\sigma (\Delta)$. Derivations are sections of a Lie algebroid $DE \rightarrow M$, called the \emph{gauge algebroid}, or, sometimes, the \emph{Atiyah algebroid}, of $E$, whose anchor is the symbol, and whose bracket is the commutator of derivations. A point in $DE$ over a point $x \in M$ is an $\mathbb R$-linear map $\Delta : \Gamma (E) \to E_x$ satisfying the Leibniz rule $\Delta (f \varepsilon) = v(f) \varepsilon_x + f(x) \Delta \varepsilon$ for some tangent vector $v \in T_x M$, the \emph{symbol} of $\Delta$.

\begin{remark}
We stress for later purposes that correspondence $E \mapsto DE$ is functorial, in the sense that every regular vector bundle map $\phi_E : E_N \to E$ between vector bundles $E_N \to N$ and $E \to M$ induces a (generically non-regular) vector bundle map $D \phi_E : D E_N \to D E$ via formula
\[
D \phi_E (\Delta) \varepsilon = \phi_E  \Delta ( \phi_L^\ast \varepsilon)
\]
for all $\Delta \in D E_N$, and $\varepsilon \in \Gamma (E)$. The correspondence $\phi_E \mapsto D \phi_E$ preserves identity and compositions. When there is no risk of confusion, we write $D \phi$ instead of $D \phi_E$.
\end{remark}

Derivations of a vector bundle $E$ can be seen as \emph{linear vector fields} on $E$, i.e.~vector fields generating a flow by vector bundle automorphisms. Namely, for every derivation $\Delta$ of $E$, there exists a unique flow $\{ \phi_t \}$ by vector bundle automorphisms $\phi_t : E \to E$ such that 
\[
\Delta \varepsilon =  \frac{d}{dt} |_{t = 0} \phi_t^\ast \varepsilon
\]
for all $\varepsilon \in \Gamma (E)$. So $\Delta$ corresponds to a linear vector field on $E$ and this correspondence is one-to-one. 

 In the case of  a line bundle $L \to M$, every first order differential operator $\Gamma (L) \to \Gamma (L)$ is a derivation. Consequently there is a vector bundle isomorphism $DL \simeq \operatorname{Hom} (\mathfrak J^1 L, L)$, and a non-degenerate pairing $\langle -, - \rangle : \mathfrak J^1 L \otimes DL  \to L$, where $\mathfrak J^1 L \to M$ is the first jet bundle of $L$.

\begin{definition}
An \emph{Atiyah $(l, m)$-tensor} on $L$ is a section of vector bundle
\[
(DL)^\ast{}^{\otimes l} \otimes (\mathfrak J^1 L)^\ast{}^{\otimes m} \otimes L
\]
or, equivalently, a vector bundle map
\[
(DL)^{\otimes l} \otimes (\mathfrak J^1 L)^{\otimes m} \to L,
\]
where $m,l \in \mathbb N_0$.
\end{definition}

\begin{example}
Of special importance are skew-symmetric Atiyah $(l, 0)$-tensors. They can be seen as $l$-cochains in the de Rham complex of the gauge algebroid $DL$ with coefficients in its tautological representation $L$. Similarly, skew-symmetric Atiyah $(0,m)$-tensors are skew-symmetric $m$-multiderivations of $L$. See Examples \ref{ex:ext_diff} and \ref{ex:Schout_brack} for more details.
\end{example}

We want to show that Atiyah $(l,m)$-tensors on $L$ correspond to certain tensors on $\widetilde M$. First of all, let $\mathcal T$ be an $(l,m)$ tensor on $\widetilde M$, i.e.~a section of the tensor bundle
\[
(T^\ast \widetilde M)^{\otimes l} \otimes (T\widetilde M)^{\otimes m},
\]
or, equivalently, a vector bundle map
\[
(T\widetilde M)^{\otimes l} \otimes (T^\ast \widetilde M)^{\otimes m} \to \mathbb R_{\widetilde M},
\]

\begin{proposition}\label{prop:homo}
The following two conditions are equivalent
\begin{itemize}
\item[$\triangleright$] $\mathcal L_Z \mathcal T = (1 - m) \mathcal T$, and $h_{-1}^\ast \mathcal T = (-)^{1-m} \mathcal T$,
\item[$\triangleright$] $h_r^\ast \mathcal T = r^{1-m} \mathcal T$ for all $r \in \mathbb R^\times$.
\end{itemize}
\end{proposition}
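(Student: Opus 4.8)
The plan is to prove the equivalence of the two homogeneity conditions for an $(l,m)$-tensor $\mathcal{T}$ on $\widetilde M$ exactly as one does for a scalar function, reducing everything to the fact that $\widetilde M$ is a principal $\mathbb R^\times$-bundle whose action is generated by the Euler vector field $Z$. The second condition, $h_r^\ast \mathcal T = r^{1-m}\mathcal T$ for all $r\in\mathbb R^\times$, obviously implies the first: specializing to $r=-1$ gives $h_{-1}^\ast \mathcal T = (-1)^{1-m}\mathcal T$, and differentiating the identity $h_{\exp t}^\ast \mathcal T = e^{(1-m)t}\mathcal T$ at $t=0$ gives $\mathcal L_Z\mathcal T = (1-m)\mathcal T$, since $Z$ is by definition the fundamental vector field of the generator $1\in\mathbb R = \operatorname{Lie}(\mathbb R^\times)$ and hence the infinitesimal generator of the flow $\{h_{\exp t}\}$.

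For the converse, the key point is that $\mathbb R^\times$ has two connected components, $\mathbb R^\times_{>0}$ and $\mathbb R^\times_{<0}$, with $\mathbb R^\times_{>0} = \exp(\mathbb R)$. First I would handle the identity component: fix a tensor $\mathcal T$ with $\mathcal L_Z\mathcal T = (1-m)\mathcal T$, and for $t\in\mathbb R$ consider the path $\mathcal T_t := e^{-(1-m)t}\, h_{\exp t}^\ast \mathcal T$. Using that $\tfrac{d}{dt}h_{\exp t}^\ast \mathcal T = h_{\exp t}^\ast(\mathcal L_Z\mathcal T)$ (the flow/Lie-derivative identity, valid for any tensor and its generating vector field), one computes $\tfrac{d}{dt}\mathcal T_t = e^{-(1-m)t}h_{\exp t}^\ast(\mathcal L_Z\mathcal T - (1-m)\mathcal T) = 0$. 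Hence $\mathcal T_t \equiv \mathcal T_0 = \mathcal T$, i.e. $h_{\exp t}^\ast \mathcal T = e^{(1-m)t}\mathcal T$, which is the desired identity for all $r = \exp t \in \mathbb R^\times_{>0}$. To reach the other component, write an arbitrary $r\in\mathbb R^\times_{<0}$ as $r = (-1)\cdot s$ with $s = -r > 0$, so $h_r = h_{-1}\circ h_s$ and therefore $h_r^\ast \mathcal T = h_s^\ast h_{-1}^\ast \mathcal T = h_s^\ast\big((-1)^{1-m}\mathcal T\big) = (-1)^{1-m} s^{1-m}\mathcal T = r^{1-m}\mathcal T$, where the middle step uses the hypothesis $h_{-1}^\ast\mathcal T = (-1)^{1-m}\mathcal T$ together with the already-established positive-scalar case. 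This covers all $r\in\mathbb R^\times$ and completes the converse.

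The one genuinely delicate point — more a matter of care than of difficulty — is the sign bookkeeping with the factor $(-1)^{1-m}$: one must check that pulling back by $h_{-1}$ and by $h_s$ commute in the right way and that the exponents add correctly, which is immediate from $h_{rs}=h_r\circ h_s$ and functoriality of pullback, $(\psi\circ\chi)^\ast = \chi^\ast\circ\psi^\ast$. I expect no real obstacle here; the proof is the tensor-valued analogue of the scalar computation sketched earlier in this section for homogeneous functions, with the extra weight $-m$ accounting for the $m$ cotangent (dually, the $m$ tangent) slots of $\mathcal T$, each of which contributes a factor $r^{-1}$ under $h_r^\ast$ relative to a function, while the target line bundle $\mathbb R_{\widetilde M}$ contributes nothing and the $l$ covariant slots contribute $r^0$ after the identification. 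The only thing worth flagging explicitly is that the argument really uses the disconnectedness of $\mathbb R^\times$: the first bulleted condition packages the infinitesimal data on the identity component ($\mathcal L_Z\mathcal T = (1-m)\mathcal T$) together with the discrete datum on $\pi_0(\mathbb R^\times)$ ($h_{-1}^\ast\mathcal T = (-1)^{1-m}\mathcal T$), and both pieces are needed to recover the global equivariance in the second condition.
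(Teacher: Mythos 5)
Your proof is correct; the paper itself omits the argument (``The proof is straightforward and it is left to the reader''), and what you give --- the flow/Lie-derivative identity on the identity component of $\mathbb R^\times$ plus the discrete datum at $r=-1$ to reach the other component --- is precisely the standard argument the authors intend. No gaps.
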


The proof is straightforward and it is left to the reader. 

\begin{definition}\label{def:homo}
An $(l,m)$-tensor $\mathcal T$ on $\widetilde M$ is \emph{homogeneous} if it satisfies one, hence both, of the equivalent conditions in Proposition \ref{prop:homo}.
\end{definition}

At a first glance, Definition \ref{def:homo} may seem surprising. But it's main motivation is the following:

\begin{theorem}\label{theor:homo}
There is a one-to-one correspondence between Atiyah $(l,m)$-tensors on $L$ and homogeneous $(l,m)$-tensors on $\widetilde M$. Geometrically, it can be described as follows: tensor bundle $(T^\ast \widetilde M)^{\otimes l} \otimes (T\widetilde M)^{\otimes m}$ is canonically isomorphic to the pull-back bundle
\[
p^\ast \left( (DL)^\ast{}^{\otimes l} \otimes (\mathfrak J^1 L)^\ast{}^{\otimes m} \otimes L \right)
\]
and, under this isomorphism, homogeneous $(l,m)$-tensors correspond to pull-back sections.
\end{theorem}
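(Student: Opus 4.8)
The plan is to build the claimed canonical isomorphism of vector bundles $(T^\ast \widetilde M)^{\otimes l} \otimes (T\widetilde M)^{\otimes m} \simeq p^\ast\!\left( (DL)^\ast{}^{\otimes l} \otimes (\mathfrak J^1 L)^\ast{}^{\otimes m} \otimes L \right)$ from the more primitive pieces already at hand, and then to check that, under this isomorphism, the $\mathbb R^\times$-equivariance that characterizes pull-back sections is exactly the homogeneity condition of Definition \ref{def:homo}. First I would record the two fundamental ``rank-one'' identifications from which everything else is assembled: (i) the identification $T\widetilde M \simeq p^\ast(DL)$ of the tangent bundle of $\widetilde M$ with the pull-back of the gauge algebroid $DL$ — this is the classical fact that derivations of $L$ are linear vector fields on $L^\ast$ (equivalently on $\widetilde M$), restricted fibrewise; and dually (ii) the identification $T^\ast\widetilde M \simeq p^\ast\!\big((\mathfrak J^1 L)^\ast \otimes L\big)$, obtained by dualizing (i) together with the canonical isomorphism $DL \simeq \operatorname{Hom}(\mathfrak J^1 L, L)$ recalled just before the definition of Atiyah tensors. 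Concretely, (ii) should be made explicit by sending $df$, for $f$ a homogeneous function, to $p^\ast(\text{the $1$-jet of the corresponding section}) $ paired appropriately; the presence of the twist by $L$ here is the source of the off-by-one shift in the homogeneity weights. Since these two identifications are $\mathbb R^\times$-equivariant by construction (the $\mathbb R^\times$-action on $\widetilde M$ covers the identity on $M$), tensoring (i) $m$ times and (ii) $l$ times yields the stated bundle isomorphism, with an overall factor of $L^{\otimes l}$ appearing from the $l$ copies of the twist in (ii) — but one must be careful: only a single copy of $L$ survives in the target, because an Atiyah $(l,m)$-tensor is $L$-valued, not $L^{\otimes l}$-valued. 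I would resolve this by noting that the natural home of the construction is the bundle map picture: an Atiyah tensor is a map $(DL)^{\otimes l} \otimes (\mathfrak J^1 L)^{\otimes m} \to L$, and pulling back along $p$ and using (i)–(ii) turns this into a map $(T\widetilde M)^{\otimes l} \otimes (T^\ast\widetilde M)^{\otimes m} \to p^\ast L$; one then uses the canonical trivialization $p^\ast L \simeq \mathbb R_{\widetilde M}$ furnished by the tautological pairing (the map $p_L$ and the section $\widetilde\lambda = p_L^\ast\lambda$ discussed above) to land in $\mathbb R_{\widetilde M}$, i.e.\ to obtain an honest $(l,m)$-tensor on $\widetilde M$. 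This trivialization $p^\ast L \simeq \mathbb R_{\widetilde M}$ is precisely \emph{not} equivariant — it has weight one under $h_r$ — and tracking that weight is what converts the naive count ``$l$ copies of weight $1$ from $L$'' into the correct single shift, producing the weight $r^{1-m}$.

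Having set up the correspondence, the remaining task is the bijectivity and the weight bookkeeping. For injectivity and surjectivity it suffices to observe that the construction is reversible step by step: given a homogeneous $(l,m)$-tensor $\mathcal T$ on $\widetilde M$, it descends (because of its equivariance, via the equivalence of categories established in \S\ref{Sec:hom_trick}) to a pull-back section of $p^\ast\!\left((DL)^\ast{}^{\otimes l}\otimes(\mathfrak J^1 L)^\ast{}^{\otimes m}\otimes L\right)$, hence to a unique section of the un-pulled-back bundle, i.e.\ a unique Atiyah $(l,m)$-tensor; and this is inverse to the homogenization map. So the heart of the proof is the weight computation: $\mathcal L_Z$ acts on $T\widetilde M$-factors (i.e.\ on $p^\ast DL$) with weight $0$ — vector fields pulled back along $p$ are $Z$-invariant up to the flow of $Z$, and in fact the linear-vector-field picture shows $DL$-directions are weight $0$ — and on $T^\ast\widetilde M$-factors with weight $0$ as well once we remember the twist; the only nontrivial weight is carried by the trivialization $p^\ast L \simeq \mathbb R_{\widetilde M}$, which has weight $1$ (this is exactly the statement that homogenizations $\widetilde\lambda$ of sections are homogeneous functions of degree $1$, established earlier), together with the $m$ copies of $T^\ast\widetilde M$ each contributing a compensating $-1$ because the cotangent directions dual to $Z$-invariant vectors must transform with weight $-1$ to keep the pairing invariant — wait, more precisely, the $\mathfrak J^1 L$-twist redistributes these, and the net effect is that $h_r^\ast\mathcal T = r^{1-m}\mathcal T$, matching Proposition \ref{prop:homo}. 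I would present this weight count cleanly by first doing the cases $(l,m)=(1,0)$ and $(0,1)$ by hand and then invoking multiplicativity of weights under tensor product.

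The main obstacle I anticipate is precisely this $L$-twist / weight-shift reconciliation: making rigorous why $l$ copies of $L$ in the cotangent identification (ii) collapse to a single $L$ in the target bundle, and why the numerical weight is $1-m$ rather than, say, $l-m$ or $1$. The clean way through is to refuse to carry bare copies of $L$ around: work entirely with bundle maps into $L$ (the second, ``equivalently'' formulation in the Definition of Atiyah tensors), pull back the \emph{map} rather than the \emph{section}, and invoke the single canonical trivialization $p^\ast L\simeq\mathbb R_{\widetilde M}$ only at the very end. A secondary, purely technical point is to verify that the isomorphism $T\widetilde M\simeq p^\ast DL$ is genuinely canonical and $\mathbb R^\times$-equivariant — this follows from the intrinsic description of derivations as linear vector fields, applied to $E = L^\ast$ and restricted to $\widetilde M = L^\ast\smallsetminus 0$, together with the observation that the Euler field $Z$ is the linear vector field corresponding to the identity derivation, so conjugation by $h_r$ acts trivially on $DL$-directions. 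Once these two structural facts are in place, the theorem follows by the functoriality and equivalence-of-categories machinery of \S\ref{Sec:hom_trick} together with the elementary weight arithmetic, and the ``straightforward'' proof deferred to the reader in Proposition \ref{prop:homo} is the model to imitate.
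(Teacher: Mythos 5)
Your overall architecture is the paper's own: the two fibrewise identifications $T\widetilde M \simeq p^\ast DL$ and (the correct form of) $T^\ast\widetilde M \simeq p^\ast \mathfrak J^1 L$, tensored up, with the tautological trivialization $p^\ast L \simeq \mathbb R_{\widetilde M}$ invoked once at the end, and homogeneity read off as invariance under the resulting principal $\mathbb R^\times$-action. However, two steps as you state them do not go through. First, your identification (ii) is wrong: $(\mathfrak J^1 L)^\ast \otimes L \simeq \operatorname{Hom}(\mathfrak J^1 L, L) \simeq DL$, so as written (ii) asserts $T^\ast\widetilde M \simeq p^\ast DL$, which is (i) again. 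More importantly, the correct identification $T^\ast\widetilde M \simeq p^\ast \mathfrak J^1 L$ is \emph{not} obtained by ``dualizing (i)'': the naive dual gives $T^\ast\widetilde M \simeq p^\ast(\mathfrak J^1 L \otimes L^\ast)$, and one must still kill the $L^\ast$-twist using the tautological pairing with the base point $\epsilon \in L^\ast \smallsetminus 0$ (concretely, $d_\epsilon\widetilde\lambda \mapsto \mathfrak j^1_{p(\epsilon)}\lambda$). That extra pairing is precisely where a nontrivial homogeneity weight enters, so glossing over it undermines the weight count you correctly identify as the heart of the proof.

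Second, the weight bookkeeping itself is not carried out: you attribute a compensating $-1$ to each of the $m$ copies of $T^\ast\widetilde M$, but in $(T^\ast\widetilde M)^{\otimes l}\otimes(T\widetilde M)^{\otimes m}$ there are $l$ such copies, and in fact it is the $m$ contravariant factors $T\widetilde M$ (dual to the $\mathfrak J^1 L$-arguments, whose pull-back action is the tangent lift rescaled by $r^{-1}$) that each contribute $-1$, while the $l$ covariant factors contribute $0$ and the trivialization $p^\ast L\simeq\mathbb R_{\widetilde M}$ contributes $+1$, giving $1-m$. Your ``wait, more precisely \dots the net effect is'' leaves exactly this derivation undone. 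Note also that the paper sidesteps the bookkeeping entirely in its first half by \emph{defining} the correspondence through the contraction formula $\widetilde T(\widetilde\Delta_1,\ldots,\widetilde\Delta_l,\widetilde\psi_1,\ldots,\widetilde\psi_m) = \widetilde{T(\Delta_1,\ldots,\Delta_l,\psi_1,\ldots,\psi_m)}$, built up from the cases $(0,0)$, $(0,1)$, $(1,0)$; since homogeneous tensors are characterized by the property that contraction with homogeneous vector fields and $1$-forms yields degree-$1$ functions, both the bijection and the exponent $1-m$ fall out immediately. Adopting that formula as the definition, and reserving the pull-back-bundle picture for the second sentence of the statement, would repair both gaps.
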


\begin{proof}
We only sketch the proof, leaving the simple details to the reader. Atiyah $(0,0)$-tensors are just sections of $L$. We already know that they correspond to homogeneous $(0,0)$-tensors, i.e.~functions on $\widetilde M$. Continue with Atiyah $(0,1)$-tensors. They are the same as derivations of $L$. Now, a vector field on $\widetilde M$ is completely determined by its action on homogeneous functions. It follows that, for every derivation $\Delta$ there exists a unique vector field $\widetilde \Delta$ on $\widetilde M$ such that
\[
\widetilde \Delta (\widetilde \lambda) = \widetilde{ \Delta \lambda}
\]
for all sections $\lambda \in \Gamma (L)$. Additionally, $\widetilde \Delta$ is homogeneous. Indeed, for any homogeneous function $f$ on $\widetilde M$ we have
\[
(\mathcal L_{\mathcal E} \widetilde \Delta )(f) = \mathcal E \widetilde \Delta f - \widetilde \Delta \mathcal E f = \widetilde \Delta f - \widetilde \Delta f = 0,
\]
where we used that both $f$ and $\widetilde \Delta f$ are homogeneous functions. It follows that $\mathcal L_{\mathcal E} \widetilde \Delta = 0$, as claimed.

Next, we consider Atiyah $(1,0)$-tensors. They are the same as sections of $\mathfrak J^1 L$. As differential forms on $\widetilde M$ are completely determined by contraction with homogeneous vector fields, for every section $\psi$ of $\mathfrak J^1 L$, there exists a unique $1$-form $\widetilde \psi$ on $\widetilde M$ such that
\[
\langle \widetilde \psi, \widetilde \Delta \rangle = \widetilde{\langle \psi, \Delta \rangle}
\]
for all derivations $\Delta$ of $L$. 
One can see that $\widetilde \psi$ is homogeneous in a similar way as for derivations. It is now clear how to proceed: let $T$ be any Atiyah $(l,m)$-tensor, and interpret it as a vector bundle map 
\[
T : (DL)^{\otimes l} \otimes (\mathfrak J^1 L)^{\otimes m} \to L,
\]
then there exists a unique homogeneous $(l,m)$-tensor $\widetilde T$ on $\widetilde M$ such that
\begin{equation}\label{eq:T_tilde}
\widetilde T (\widetilde \Delta_1, \ldots, \widetilde \Delta_l, \widetilde \psi_1, \ldots, \widetilde \psi_m) = \widetilde{T(\Delta_1, \ldots, \Delta_l, \psi_1, \ldots, \psi_m)}
\end{equation}
for all derivations $\Delta_1, \ldots, \Delta_l$ of $L$, and all sections $\psi_1, \ldots, \psi_m$ of $\mathfrak J^1 L$.

For the second part of the statement, begin defining a vector bundle map $p_T : T \widetilde M \to DL$ covering $\widetilde M \to M$ as follows (see also \cite[Appendix]{V2015}). Let $\epsilon \in \widetilde M$. We map a tangent vector $v \in T_{\epsilon} \widetilde M$ to derivation $p_T (v) \in D_{p(\epsilon)} L$ implicitly defined by
\[
\langle \epsilon, p_T (v) \lambda \rangle = v \widetilde \lambda,
\]
for all $\lambda \in \Gamma (L)$. Clearly, $p_T$ is fiber-wise injective. For dimension reasons it is also fiber-wise surjective, so diagram
\[
\xymatrix{
T \widetilde M \ar[r]^-{p_T} \ar[d] & DL \ar[d] \\
\widetilde M \ar[r]^-p & M
}
\]
is a pull-back diagram inducing an isomorphism $T \widetilde M \simeq p^\ast DL$. 

Next define a vector bundle map $p_{T^\ast} : T^\ast \widetilde M \to \mathfrak J^1 L$ as follows. Let $\epsilon \in \widetilde M$, and notice that any covector $\theta \in T_\epsilon^\ast \widetilde M$ is of the form $d_{\epsilon} \widetilde \lambda$ for some homogeneous function $\widetilde \lambda$. Now put
\[
p_{T^\ast} (d_\epsilon \widetilde \lambda ) := \mathfrak j^1_{p(\epsilon)} \lambda.
\]
Equivalently, for any $\theta \in T^\ast_\epsilon \widetilde M$ and any $\Delta \in D_{p(\epsilon)}L$,
\[
\langle \epsilon, p_{T^\ast} (\theta) (\Delta) \rangle =  \theta (v),
\]
where $v \in T_\epsilon \widetilde M$ is any tangent vector such that $p_T (v) = \Delta$. This shows that $p_{T^\ast}$ is a well-defined vector bundle map and that diagram
\[
\xymatrix{
T^\ast \widetilde M \ar[r]^-{p_{T^\ast}} \ar[d] & \mathfrak J^1L \ar[d] \\
\widetilde M \ar[r]^-p & M
}
\]
is a pull-back diagram, so that $T^\ast \widetilde M \simeq p^\ast \mathfrak J^1 L$.

Finally, we get isomorphisms

\begin{equation}\label{eq:pull-back}
\begin{aligned}
& p^\ast \left( (DL)^\ast{}^{\otimes l} \otimes (\mathfrak J^1 L)^\ast{}^{\otimes m} \otimes L \right)  \\
& \simeq 
p^\ast (DL)^\ast{}^{\otimes l} \otimes p^\ast  (\mathfrak J^1 L)^\ast{}^{\otimes m} \otimes p^\ast  L   \\
& \simeq (T\widetilde M)^\ast{}^{\otimes l} \otimes (T^\ast \widetilde M)^\ast{}^{\otimes m} \otimes \mathbb R_{\widetilde M} \\
& \simeq (T^\ast \widetilde M)^{\otimes l} \otimes (T \widetilde M)^{\otimes m}
\end{aligned}
\end{equation}

We leave it to the reader to check that, under this isomorphism, homogeneous tensors correspond to pull-back sections.
\end{proof}

\begin{remark}
The proof of Theorem \ref{theor:homo} shows that (higher degree) homogeneous tensors are characterized by the property that contraction with homogeneous vector fields and $1$-forms gives a homogeneous function.
\end{remark}

\begin{remark}
It follows from isomorphism (\ref{eq:pull-back}) that there are natural vector bundle maps
\begin{equation}\label{eq:p_l,m}
p_{l,m} : (T^\ast \widetilde M)^{\otimes l} \otimes (T \widetilde M)^{\otimes m} \to (DL)^\ast{}^{\otimes l} \otimes (\mathfrak J^1 L)^\ast{}^{\otimes m} \otimes L  
\end{equation}
covering $p$. Explicitly they are defined as follows. Let $\epsilon \in \widetilde M$, and $\mathcal T \in (T_\epsilon^\ast \widetilde M)^{\otimes l} \otimes (T_\epsilon \widetilde M)^{\otimes m}$. Then $p_{l,m} (\mathcal T)$ is uniquely determined by
\[
\langle \epsilon, p_{l,m} (\mathcal T) (\Delta_1, \ldots, \Delta_l; \psi_1, \ldots, \psi_m)) \rangle = \mathcal T (v_1, \ldots, v_l; \theta_1, \ldots \theta_m)
\]
for all $\Delta_i \in D_{p(\epsilon)} L$, and all $\psi_j \in \mathfrak J^1_{p(\epsilon)} L$, where $v_i \in T_\epsilon \widetilde M$, and $\theta_j \in T^\ast_\epsilon \widetilde M$, are such that $\Delta_i = p_T (v_i)$, and $\psi_j = p_{T^\ast} (\theta_j)$.

The $p_{l,m}$ are all principal $\mathbb R^\times$-bundle projections. Let us describe the corresponding Euler vector field. First of all the Lie derivative $\mathcal L_Z$ of tensors along the Euler vector field $Z$ is a derivation of the vector bundle 
\[
(T^\ast \widetilde M)^{\otimes l} \otimes (T \widetilde M)^{\otimes m} \to \widetilde M.
\]
It easily follows from (the second part of) Theorem \ref{theor:homo} that the Euler vector field of the principal $\mathbb R^\times$-bundle (\ref{eq:p_l,m}) agrees with the linear vector field corresponding to derivation
\[
\mathcal L_Z + (m-1) \mathbb 1
\]
where $\mathbb 1$ is the identity derivation. 
\end{remark}

\begin{remark}
Beware that vector bundle $(T^\ast \widetilde M)^{\otimes l} \otimes (T \widetilde M)^{\otimes m} $ is also canonically isomorphic to the pull-back along $p$ of 
$
(DL)^\ast{}^{\otimes l} \otimes (\mathfrak J^1 L)^\ast{}^{\otimes m} 
$
(without the last factor $L$). Accordingly, there is another (free and proper) action of $\mathbb R^\times$ on $(T^\ast \widetilde M)^{\otimes l} \otimes (T \widetilde M)^{\otimes m} $. It can be checked that the latter is the natural lift to tensors of the $\mathbb R^\times$-action on $\widetilde M$. The corresponding Euler vector field is the linear vector field corresponding to the Lie derivative of tensors along $Z$. This is also useful in several situations. However we will not need it in this paper, so we will not insist in this direction.
\end{remark}

\begin{example}
The Euler vector field $Z$ is homogeneous itself. So it corresponds to a derivation $\Delta$ of $L$. Actually $\Delta$ is just \emph{minus} the identity $\mathbb 1 : \Gamma (L) \to \Gamma (L)$.
\end{example}

    \subsection{Some distinguished natural operations on Atiyah tensors} 
    
Denote by $T^{l,m} (\widetilde M)$ the space of $(l,m)$-tensors on $\widetilde M$, and by $T^{l,m}_L$ the space of Atiyah $(l,m)$-tensors on the line bundle $L \to M$. Take an $\mathbb R$-multilinear map
    \[
   \widetilde \mu : T^{l_1,m_1} (\widetilde M) \otimes_{\mathbb R} \cdots \otimes_{\mathbb R} T^{l_k, m_k} (\widetilde M) \to T^{l,m} (\widetilde M).
    \]
If $\widetilde \mu$ preserves homogeneous tensors, then it induces a map 
\[
\mu : T^{l_1,m_1}_L \otimes_{\mathbb R} \cdots \otimes_{\mathbb R} T^{l_k, m_k}_L \to T^{l,m}_L.
\]
There are several natural operations on Atiyah tensors arising in this way.  We now present a list of those operations that will be relevant in this paper.

\begin{example}[Contraction with a tensor valued vector field]
Contraction of one contravariant index with the first covariant index determines an $\mathbb R$-bilinear (actually $C^\infty (\widetilde M)$-bilinear) map
\begin{equation}\label{eq:contr}
T^{l_1, 1} (\widetilde M) \otimes T^{l_2, m_2} (\widetilde M) \to T^{l_1 + l_2 - 1, m_2} (\widetilde M), \quad (T, U) \mapsto T \into U.
\end{equation}
Clearly, contraction $\into$ preserves homogeneous tensors. So it induces a map
\[
T^{l_1, 1} _L \otimes T^{l_2, m_2}_L \to T^{l_1 + l_2 - 1, m_2}_L, \quad (T, U) \mapsto T \into U
\]
that can be described as follows. First notice that
\[
T^{l_1, 1}_L = (DL)^\ast{}^{\otimes l_1} \otimes (\mathfrak J^1 L)^\ast \otimes L \simeq (DL)^\ast{}^{\otimes l_1} \otimes DL
\]
So $T \in T^{l_1, 1}_L$ can be seen as a vector bundle map
\[
T : (DL)^{\otimes l_1} \to DL.
\]
If we see $U \in T^{l_2, m_2}_L$ as a vector bundle map
\[
U : (DL)^\ast{}^{\otimes l_2} \otimes (\mathfrak J^1 L)^\ast{}^{\otimes m_2} \to L
\]
Then $T \into U$ is the vector bundle map
\[
T \into U :  (DL)^\ast{}^{\otimes l_1 + l_2 -1} \otimes (\mathfrak J^1 L)^\ast{}^{\otimes m_2} \to L
\]
given by
\[
\begin{aligned}
& (T \into U)(\Delta_1, \ldots, \Delta_{l_1 +l_2 -1}, \psi_1, \ldots, \psi_{m_2}) \\
& = U (T (\Delta_1, \ldots, \Delta_{l_1}), \Delta_{l_1 +1}, \ldots, \Delta_{l_1 + l_2 -1} ; \psi_1, \ldots, \psi_{m_2}),
\end{aligned}
\]
for all derivations $\Delta_i $ of $L$ and all sections $\psi_j$ of $\mathfrak J^1 L$. By definition
\[
\widetilde{T \into U} = \widetilde T \into \widetilde U
\]
for all $T \in T^{l_1, 1}_L$ and $U \in T^{l_2, m_2}_L$. In the following, we will only consider the case $l_1 = l_2 = m_2 = 1$. In this case, contraction (\ref{eq:contr}) is just composition of $(1,1)$-tensors seen as endomorphisms $T \widetilde M \to T \widetilde M$ of the tangent bundle. At the level of Atiyah tensors, we get composition of Atiyah $(1,1)$-tensors seen as endomorphisms $DL \to DL$ of the gauge algebroid. 
\end{example}  

\begin{example}[Exterior differential]\label{ex:ext_diff}
We call \emph{Atiyah} forms, the skew-symmetric Atiyah $(\bullet ,0)$-tensors, and denote them by $\Omega^\bullet_L$. They can be seen as vector bundle maps
\[
\wedge^\bullet DL \to L,
\]
and correspond to differential forms $\Omega^\bullet (\widetilde M)$ under homogenization. The exterior differential
\[
d : \Omega^{\bullet} (\widetilde M) \to \Omega^{\bullet + 1} (\widetilde M)
\]
preserves homogeneous forms. So it induces a differential
\[
d_D : \Omega^\bullet_L \to \Omega^{\bullet + 1}_L.
\]
It is easy to see that $d_D$ agrees with the de Rham differential of the gauge algebroid with coefficient in its tautological representation $L$. We have
\[
\widetilde{d_D \omega} = d \widetilde \omega
\]
for all $\omega \in \Omega^\bullet_L$. On Atiyah $0$-forms $\Omega^0_L = \Gamma (L)$, differential $d_D$ agrees with the jet prolongation $\mathfrak j^1 : \Gamma (L) \to \Gamma (\mathfrak J^1 L) = \Omega^1_L$.
\end{example}

\begin{example}[Schouten bracket]\label{ex:Schout_brack}
A skew-symmetric Atiyah $(0, m)$-tensor $\Delta$ can also be seen as a skew-symmetric $m$-multiderivation of $L$, i.e.~a skew-symmetric, $\mathbb R$-multilinear, $m$-ary multibracket
\[
\{-, \ldots, -\} : \Gamma (L) \times \cdots \times \Gamma (L) \to \Gamma (L)
\]
which is a derivation in each argument. To see this, first interpret $\Delta$ as a vector bundle map
\[
\Delta : \wedge^m \mathfrak J^1 L \to L
\]
and then put
\[
\{ \lambda_1, \ldots, \lambda_m \} := \Delta (\mathfrak j^1 \lambda_1, \ldots, \mathfrak j^1 \lambda_m),
\]
$\lambda_1, \ldots, \lambda_m \in \Gamma (L)$. Skew-symmetric Atiyah $(0, \bullet)$-tensors will be denoted by $\mathcal D^\bullet L$. They correspond to multivectors $\mathfrak X^\bullet (\widetilde M)$ under homogenization. The Schouten bracket
\[
[-,-]^{S} : \mathfrak X^\bullet (\widetilde M) \otimes_{\mathbb R} \mathfrak X^\bullet (\widetilde M) \to \mathfrak X^\bullet (\widetilde M)
\]
preserves homogeneous multivectors. So it induces a bracket
\[
[-,-]^{SJ} : \mathcal D^\bullet L \otimes_{\mathbb R} \mathcal D^\bullet L \to  \mathcal D^\bullet L
\]
sometimes called the \emph{Schouten-Jacobi bracket} (see, e.g., \cite{LOTV2014, LTV2016}). Let $\Delta_1 \in  \mathcal D^{m_1} L$ and $\Delta_2 \in  \mathcal D^{m_2} L$, and denote by $\{-, \ldots, -\}_{[1]}$ and $\{-,\ldots, -\}_{[2]}$ the corresponding multiderivations. Then the Schouten-Jacobi bracket $[\Delta_1, \Delta_2]^{SJ}$ corresponds to $(m_1 + m_2 -1)$-multiderivation $\{-, \ldots, -\}$ given by the following \emph{Gerstenhaber formula}:
\begin{equation}\label{eq:SJ}
\begin{aligned}
& \{ \lambda_1, \ldots, \lambda_{m_1+m_2 -1}\} \\
& = \sum_{\sigma \in S_{m_2, m_1 -1}} (-)^\sigma \{ \{\lambda_{\sigma(1)}, \ldots, \lambda_{\sigma (m_2)} \}_{[2]}, \lambda_{\sigma(m_2 +1)}, \ldots, \lambda_{\sigma (m_1 + m_2 -1)} \}_{[1]}
\\
& \quad - (-)^{(m_1 +1)(m_2 +1)} \sum_{\sigma \in S_{m_1, m_2 -1}} (-)^\sigma \{ \{\lambda_{\sigma(1)}, \ldots, \lambda_{\sigma (m_1)} \}_{[1]}, \lambda_{\sigma(m_1 +1)}, \ldots, \lambda_{\sigma (m_1 + m_2 -1)} \}_{[2]}.
\end{aligned}
\end{equation}
We have
\[
\widetilde{[\Delta_1, \Delta_2]}{}^{SJ} = [\widetilde \Delta_1, \widetilde \Delta_2]^S.
\]
Notice that the Schouten-Jacobi bracket can be defined on skew-symmetric multiderivations of any (non-necessarily rank $1$) vector bundle by the same fomula (\ref{eq:SJ}).
\end{example}

\begin{example}[Frolicher-Nijenhuis bracket]
Skew-symmetric Atiyah $(\bullet, 1)$-tensors can be seen as vector bundle maps
\[
U : \wedge^\bullet DL \to DL.
\]
Denote them by $\Omega^\bullet_L (DL)$. They correspond to vector valued differential forms (equivalently, form valued vector fields) $\Omega^\bullet (\widetilde M, T \widetilde M)$ under homogenization. The Frolicher-Nijenhuis bracket
\[
[-,-]^{FN} : \Omega^\bullet (\widetilde M, T \widetilde M) \otimes_{\mathbb R} \Omega^\bullet (\widetilde M, T \widetilde M) \to \Omega^\bullet (\widetilde M, T \widetilde M)
\]
preserves homogeneity. So it induces a bracket
\[
[-,-]^{FN}_D : \Omega^\bullet_L (DL) \otimes_{\mathbb R} \Omega^\bullet_L (DL) \to \Omega^\bullet_L (DL).
\]
Notice that $\Omega^1_L (DL)$ consists of Atiyah $(1,1)$-tensors. For $U, V \in \Omega^1_L (DL)$, their bracket $[U, V]^{FN}_D \in \Omega^2_L (DL)$ is given by formula
\[
\begin{aligned}
& {[U, V]}^{FN}_D (\Delta_1, \Delta_2) \\
& = [U(\Delta_1), V(\Delta_2)] - U [V(\Delta_1), \Delta_2] - U[ \Delta_1, V(\Delta_2)] 
+ UV[\Delta_1, \Delta_2] + (U \leftrightarrow V),
\end{aligned}
\]
for all derivations $\Delta_1, \Delta_2$ of $L$. In particular, $[U,U]^{FN}_D$ is twice the \emph{Lie algebroid Nijenhuis torsion} $\mathcal T_U$ of $U : DL \to DL$:
\[
\mathcal T_U (\Delta_1, \Delta_2) = [U(\Delta_1), U(\Delta_2)] - U [U(\Delta_1), \Delta_2] - U[ \Delta_1, U(\Delta_2)] 
+ U^2[\Delta_1, \Delta_2].
\]
 We have
\begin{equation}\label{eq:hom_Frol}
\widetilde{[U, V]}{}^{FN}_D = [\widetilde U, \widetilde V]^{FN}.
\end{equation}
\end{example}

    \subsection{Examples}
    
    In this section we discuss more closely the homogenization of those specific classes of Atiyah tensors that will be relevant in the bulk of the paper.
    
    \begin{example}[Contact structures]\label{ex:contact}
Fix a line bundle $L \to M$, and consider a contact structure $H$ on $M$ with the additional property that the normal bundle $TM/H$ is isomorphic to $L$. We also fix once for all an isomorphism $TM/H \to L$. In this situation, $H$ is actually equivalent to a \emph{symplectic Atiyah form} on $L$, i.e.~a non degenerate, $d_D$-closed, Atiyah $2$-form $\omega$ \cite{V2015}. Correspondence $H \mapsto \omega$ can be defined in this way. Composing the projection $TM \to TM/H$ with the isomorphism $TM/H \to L$, we get an $L$-valued $1$-form $\theta$ on $M$:
\[
\theta : TM \to L
\]
with the property $\ker \theta = H$. The form $\theta$ will be sometimes referred to as the \emph{structure form} of $H$. Further composing $\theta$ with the symbol $\sigma : DL \to L$, we get an Atiyah $1$-form $\Theta$. Put $\omega = d_D \Theta$. It can be checked that $\omega$ is a symplectic Atiyah $2$-form in the above sense. Conversely, let $\omega \in \Omega^2_L$ be a symplectic Atiyah $2$-form. Contraction $i_{\mathbb 1} \omega$ descends to an $L$-valued $1$-form $\theta$ on $M$, i.e.~there is a unique $\theta \in \Omega^1 (M, L)$ such that
\[
i_{\mathbb 1} \omega = \theta \circ \sigma.
\]
Put $H = \ker \theta$. It can be checked that $H$ is a contact distribution and this construction inverts the previous one. 

Now let $H$ be a contact structure and let $\omega$ be the associated symplectic Atiyah form. The homogenization $\widetilde \omega$ of $\omega$ is a homogeneous symplectic form on $\widetilde M$, i.e.~$\mathcal L_Z \widetilde \omega = \widetilde \omega$. Triple $(\widetilde M, \widetilde \omega, Z)$ agrees with the standard \emph{symplectization} of the contact manifold $(M, H)$. 
    \end{example}
    
     \begin{example}[Jacobi structures]\label{ex:Jacobi_structures}
   This example generalizes previous one. Recall that a \emph{Jacobi structure} on a manifold $M$ is a line bundle $L \to M$ together with a biderivation of $L$:
   \[
   \Gamma (L) \times \Gamma (L) \to \Gamma (L)
   \]
   denoted either $J$ or $\{-,-\}$, such that $[J,J]^{SJ} = 0$, i.e.~$\{-,-\}$ is a Lie bracket. Triple $(M, L, J)$ is then called a \emph{Jacobi manifold}. The homogenization $\widetilde J$ of $J$ is a Poisson structure on $\widetilde M$ with the additional property that $\mathcal L_Z \widetilde J = - \widetilde J$. Triple $(\widetilde M, \widetilde J, Z)$ is sometimes called the \emph{Poissonization} of $(M, L, J)$.
   
Interpret $J$ as a $2$-form 
\[
J :\wedge^2 \mathfrak J^1 L \to L.
\]
When it is non-degenerate, then its inverse
\[
\omega = J^{-1} : \wedge^2 DL \to L
\]
is a symplectic Atiyah $2$-form, and conversely, the inverse of a symplectic Atiyah $2$-form is a Jacobi structure. It follows that there is a one-to-one correspondence between non-degenerate Jacobi structures on $L$ and contact structures $H$ such that $TM/H = L$. Notice that $J$ is a non-degenerate Jacobi structure iff its homogenization $\widetilde J$ is a non-degenerate Poisson structure, and, in this case
\[
\widetilde J{}^{-1} = \widetilde{J^{-1}}.
\]
i.e.~\emph{symplectization and Poissonization agree}.
    \end{example}
    
    \begin{example}[Nijenhuis structures] \label{ex:Nijenhuis}
Let $U : DL \to DL$ be an Atiyah $(1,1)$-tensor on a line bundle $L \to M$. We say that $U$ is a \emph{Nijenhuis Atiyah tensor} if its Lie algebroid Nijenhuis torsion vanishes: $\mathcal T_U = 0$. It immediately follows from (\ref{eq:hom_Frol}) that $U$ is a Nijenhuis Atiyah tensor iff its homogenization $\widetilde U$ is a Nijenhuis structure on $\widetilde M$. In this case, the pair $(\widetilde U, Z)$ is a \emph{homogeneous Nijenhuis structure}, ie. $\mathcal L_Z \widetilde U = 0$.

Recall that a \emph{Poisson-Nijenhuis structure} on a manifold $M$ is a pair $(\pi, N)$ where
\begin{enumerate}
\item[$\triangleright$] $\pi$ is a Poisson structure;
\item[$\triangleright$] $N : TM \to TM$ is a Nijenhuis structure;
\item[$\triangleright$] $\pi_N := \pi (N^\ast -, -)$ is skew-symmetric;
\item[$\triangleright$] $\mathcal L_{\pi^\sharp \eta} N^\ast \theta - \mathcal L_{\pi^\sharp \theta} N^\ast \eta - d \langle \pi_N ,\alpha\wedge \beta\rangle = N^\ast [\eta, \theta]_\pi$,
\end{enumerate} 
 for all $\eta, \theta \in \Omega^1 (M)$. Here $N^\ast : T^\ast M \to T^\ast M$ is the transpose of $N$, and we denoted by
$
[-,-]_\pi
$
 the Lie algebroid bracket in the cotangent algebroid associated to the Poisson structure $\pi$, i.e.~the Lie bracket on $1$-forms given by formula
\begin{equation}\label{eq:brack_pi}
[\eta, \theta]_\pi = \mathcal L_{\pi^\sharp \eta} \theta - \mathcal L_{\pi^\sharp \theta} \eta- d \langle \pi, \eta \wedge \theta \rangle
\end{equation}
for all $\eta, \theta \in \Omega^1 (M)$, where $\pi^\sharp : T^\ast M \to TM$ is the vector bundle map induced by $\pi$. One can define a \emph{Jacobi-Nijenhuis structure} exactly in the same way, but replacing the Poisson structure with a Jacobi structure and the Nijenhuis structure with a Nijenhuis Atiyah tensor (see Section \ref{Sec:JN} for more details). So, let $(J, U)$ be a pair consisting of a bi-derivation and an Atiyah 1-1 tensor. It's clear that $(J,U)$ is a Jacobi-Nijenhuis structure iff $(\widetilde J, \widetilde U)$ is a Poisson-Nijenhuis structure, and, in this case, the triple $(\widetilde J, \widetilde U, Z)$ is  a \emph{homogeneous Poisson-Nijenhuis structure}, i.e.~$\mathcal L_Z \widetilde J = - \widetilde J$, and $\mathcal L_Z \widetilde U = 0$.

Finally, let $(\pi, N)$ be a Poisson-Nijenhuis structure. If $\pi$ is non-degenerate and $\omega = \pi^{-1}$ is the associated symplectic form, we say that $(\omega, N)$ is a \emph{symplectic-Nijenhuis} structure. This is the same as to say that 
\begin{enumerate}
\item[$\triangleright$] $\omega$ is a symplectic form,
\item[$\triangleright$] $N$ is a Nijenhuis structure, 
\item[$\triangleright$] $\omega_N := \omega (N -, -)$ is skew-symmetric, and
\item[$\triangleright$] $d \omega_N = 0$.
\end{enumerate} 
One defines \emph{contact-Nijenhuis} structures in a similar (and obvious) way (see Section \ref{Sec:JN}). The homogenization of a contact-Nijenhuis structure is a symplectic-Nijenhuis structure.
   \end{example}

    \subsection{The complex case}\label{Sec:complex_case}
    
 Let $X = (M, j)$ be a complex manifold. Here $j : TM \to TM$ is the complex structure. Consider a holomorphic line bundle $L \to X$. We will write $L \to M$ when we want to forget about the complex structures, and want to see $L$ as a real, rank $2$ vector bundle over $M$. Let $L^\ast \to X$ be the complex dual of $L$, put $\widetilde X = L^\ast \smallsetminus 0$, and denote by $\widetilde p : \widetilde X \to X$ the projection. We call $\widetilde X$ the \emph{complex homogenization} of $L \to X$. It is a holomorphic principal $\mathbb C^\times$-bundle over $X$ and every holomorphic principal $\mathbb C^\times$-bundle arises in this way. More precisely there is an equivalence between the category of holomorphic line bundles and the category of holomorphic principal $\mathbb C^\times$-bundles. Even more, restricting to holomorphic sections, holomorphic derivations, and holomorphic jets, one easily sees that the \emph{homogenization construction} described above has a precise (and obvious) analogue in the holomorphic realm. For instance, holomorphic derivations of $L \to X$ correspond to \emph{homogeneous holomorphic vector fields} on $\widetilde X$, where, by ``homogeneous'', we mean that they commute with the (holomorphic) Euler vector field. The aim of this subsection is remarking that, in this complex case, the homogenization can be actually performed in two steps. First we pass from $L \to X$ to a real $U(1)$-principal bundle $\widehat M \to M$ equipped with a canonical real line bundle $\widehat L \to \widehat M$. Second we homogenize $\widehat L \to \widehat M$ and see that this homogenization agrees with $\widetilde X$. At any step we keep track of the complex structures in a suitable way. The idea behind this construction comes from the following trivial remark. The structure group $\mathbb C^\times$ of the principal bundle $\widetilde X \to X$ factorizes as $\mathbb C^\times \simeq U(1) \times \mathbb R^\times $. Isomorphism $\mathbb C^\times \simeq  U(1) \times \mathbb R^\times $ is given by
 \[
U(1) \times \mathbb R^\times  \to \mathbb C^\times, \quad (\varphi, r) \mapsto r e^{i \varphi/2}.
 \]
 Roughly, to construct the homogenization $\widetilde X$ of $L \to X$, one can take care of the $U(1)$-factor first, and the $\mathbb R^\times$ factor later on. We now describe how to do this in a precise way. Actually we already discussed this construction in \cite[Section 3.5]{VW2016b}. We report the details here for completeness. We also add some new details.
 

 First of all, in this setting, we denote by $\widetilde M$ the real manifold underlying $\widetilde X$, and by $j_{\widetilde M}$ the complex structure on it, so $\widetilde X = (\widetilde M, j_{\widetilde M})$. Now, regard $L^\ast \to M$ as a real vector bundle and take its real projective bundle $\widehat M := \mathbb R \mathbb P (L^\ast)$. Denote by $\widehat p : \widehat M \to M$ the projection. By construction $\widehat p : \widehat M \to M$ is a principal $U(1)$-bundle with group action given by
 \[
 U(1) \times \widehat M \to \widehat M, \quad (\varphi , [\phi]) \mapsto \varphi.[\phi] := [ e^{i \varphi/2} \phi], \quad \phi \in L^\ast\smallsetminus 0.
 \]
Now, let $\widehat L \to \widehat M$ be the dual of the tautological bundle.

\begin{remark}\label{rem:U(1)_hom}
By definition, the homogenization $\widehat L{}^\ast \smallsetminus 0$ of $\widehat L$ is exactly $\widetilde M$.
\end{remark}

We use Remark \ref{rem:U(1)_hom} to interpret the pair $(\widehat M, \widehat L)$ as an intermediate step towards the complex homogenization $\widetilde X$ of $L \to X$. We now discuss the main properties of $(\widehat M, \widehat L)$. In the following we interpret $\widetilde M$ as the homogenization of $\widehat L$. There are two main structures on $\widetilde M$:
\begin{itemize}
\item[$\triangleright$] the complex structure $j_{\widetilde M} : T \widetilde M \to T \widetilde M$
\item[$\triangleright$] the holomorphic Euler vector field $Z$.
\end{itemize}
Both can be seen as homogeneous tensors. Hence they induce Atiyah tensors on $\widehat L$. To see this, begin with $Z$, and let $\zeta = 2 \operatorname{Re} Z$, so that 
\[
Z = \frac{1}{2} (\zeta - i j_{\widetilde M} \zeta).
\]
Clearly $\zeta$ is the (restriction of the) Euler vector field of the real vector bundle $L^\ast \to M$. Identities
\[
[\zeta, j_{\widetilde M}]^{FN} = \mathcal L_\zeta j_{\widetilde M} = 0, \quad \text{and} \quad [\zeta, j_{\widetilde M} \zeta] = \mathcal L_\zeta j_{\widetilde M} \zeta = 0
\]
then show that
\begin{itemize}
\item[$\triangleright$] $j_{\widetilde M}$ is a homogeneous $(1,1)$-tensor,
\item[$\triangleright$] $j_{\widetilde M} \zeta$ is a homogeneous vector field.
\end{itemize}
Accordingly, 
\begin{itemize}
\item[$\triangleright$] $j_{\widetilde M}$ is the homogenization of an Atiyah $(1,1)$-tensor $\widehat j$ on $\widehat L$,
\item[$\triangleright$] $j_{\widetilde M} \zeta$ is the homogenization of derivation $\widehat j \mathbb 1$ of $\widehat L$.
\end{itemize}
Additionally, it follows from the fact that $j_{\widetilde M}$ is a complex structure, that $\widehat j$ is a complex structure on the gauge algebroid of $\widehat L$, i.e.
\[
\widehat j{}^2 = - \mathbb 1, \quad \text{and} \quad \mathcal T_{\widehat j} = 0.
\]

We conclude this section discussing the relationship between the line bundle $\widehat L$, its gauge algebroid and its first jet bundle, with the holomorphic line bundle $L$, its \emph{holomorphic gauge algebroid}, and its \emph{holomorphic first jet bundle} respectively. 

\begin{proposition}\label{prop:L^hat}
There is a one-to-one correspondence, denoted $\lambda \mapsto \widehat \lambda$, between smooth sections $\lambda$ of $L$ and sections $\widehat \lambda$ of the complexification $\widehat L \otimes \mathbb C \to \widehat M$ such that $(\widehat j \mathbb 1 - i) \widehat \lambda = 0$. Geometrically, there is a canonical regular complex line bundle map
\[
\begin{array}{c}
\xymatrix{ \widehat L \otimes \mathbb C \ar[r]^-{\widehat p_L} \ar[d] & L \ar[d] \\
\widehat M \ar[r]^-{\widehat p} & M
}
\end{array}.
 \]
In particular complex line bundle $\widehat L \otimes \mathbb C$ is canonically isomorphic to the pull-back $\widehat p{}^\ast L$. The pull-back of sections identifies $\lambda \in \Gamma (L)$ with $\widehat \lambda \in \Gamma (\widehat L \otimes \mathbb C)$. Finally, $\lambda$ is holomorphic iff $(\widehat j{}^\dag - i) \mathfrak j^1 \widehat \lambda = 0$.
\end{proposition}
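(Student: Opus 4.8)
The plan is to use the real manifold $\widetilde M$ underlying $\widetilde X = L^\ast \smallsetminus 0$ as a common bridge and to translate \emph{both} sides of the asserted correspondence into conditions on functions on $\widetilde M$. Recall from the discussion preceding the statement that $\zeta = 2\operatorname{Re} Z$ is the Euler vector field of the real bundle $L^\ast \to M$, which is exactly the homogenization Euler field of $\widehat L$ (Remark \ref{rem:U(1)_hom}), and that $j_{\widetilde M}\zeta$ is the homogenization of the derivation $\widehat j\mathbb 1$ of $\widehat L$. Note also that $j_{\widetilde M}\zeta$ at a point $\phi$ is the vertical vector $i\phi$, so its flow is the rotation $\phi \mapsto e^{it}\phi$ on $L^\ast$; in particular $h_{-1}$ (the map $\phi\mapsto-\phi$) is its time-$\pi$ flow. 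The one structural input used throughout is that homogenization turns the action of a derivation on a section into the action of the corresponding vector field on the corresponding function, and that it is compatible with the $L$-valued duality pairings (proof of Theorem \ref{theor:homo}); complexifying, the same holds for $\widehat L \otimes \mathbb C$.

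For the bijection I would argue as follows. On one side, the homogenization scheme for the \emph{real} line bundle $\widehat L$, complexified, identifies $\Gamma(\widehat L \otimes \mathbb C)$ with the $\mathbb C$-valued functions $f$ on $\widetilde M$ satisfying $\zeta f = f$ and $h_{-1}^\ast f = -f$; under this identification $(\widehat j\mathbb 1 - i)\widehat\lambda = 0$ reads $(j_{\widetilde M}\zeta)f = if$. On the other side, a smooth section $\lambda$ of $L \to X$ is the same as a fibrewise $\mathbb C$-linear function $\widetilde\lambda$ on $L^\ast$, i.e.\ (restricting to $\widetilde M = \widetilde X$, the complex analogue of the $p_L$-construction of Section \ref{Sec:hom_trick}) a smooth function on $\widetilde M$ with $h_c^\ast\widetilde\lambda = c\widetilde\lambda$ for all $c \in \mathbb C^\times$; since $\mathbb C^\times$ is connected and generated by the flows of $\zeta$ and $j_{\widetilde M}\zeta$, this is equivalent to the pair of equations $\zeta\widetilde\lambda = \widetilde\lambda$ and $(j_{\widetilde M}\zeta)\widetilde\lambda = i\widetilde\lambda$. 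Thus both sides are identified with $\{ f : \zeta f = f,\ (j_{\widetilde M}\zeta)f = if \}$, \emph{provided} the parity constraint $h_{-1}^\ast f = -f$ is a consequence of these two equations — and it is, because $h_{-1}$ is the time-$\pi$ flow of $j_{\widetilde M}\zeta$, so $(j_{\widetilde M}\zeta)f = if$ integrates to $h_{-1}^\ast f = e^{i\pi}f = -f$. This establishes the one-to-one correspondence $\lambda \leftrightarrow \widehat\lambda$, with $\widehat\lambda$ uniquely characterized by the equality $\widetilde{\widehat\lambda} = \widetilde\lambda$ of functions on $\widetilde M$ (homogenizations taken with respect to $\widehat L$ and $L$ respectively). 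I expect this parity remark — recognizing that the apparently missing sign condition is automatic because $\{\pm 1\}$ is the overlap of the $U(1)$- and $\mathbb R^\times$-factors — to be the only genuinely non-formal point of the first part; the rest is bookkeeping with the homogenization dictionary.

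It remains to read off the geometric description and the holomorphicity criterion. Over $[\phi] \in \widehat M = \mathbb R \mathbb P (L^\ast)$ one has $\widehat L_{[\phi]} \otimes \mathbb C = \operatorname{Hom}_{\mathbb R}(\mathbb R\phi, \mathbb C)$, and I would define $\widehat p_L(\eta) \in L_x = \operatorname{Hom}_{\mathbb C}(L^\ast_x, \mathbb C)$ by $\widehat p_L(\eta)(z\phi) := z\,\eta(\phi)$; an immediate check shows this is independent of the representative $\phi$ of $[\phi]$, is $\mathbb C$-linear, and is a fibrewise isomorphism, hence a regular complex line bundle map covering $\widehat p$, giving $\widehat L \otimes \mathbb C \simeq \widehat p{}^\ast L$. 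Evaluating $\widehat p_L^\ast\lambda$ and homogenizing shows at once that its homogenization equals $\widetilde\lambda$, whence $\widehat p_L^\ast\lambda = \widehat\lambda$ and the correspondence is pull-back along $\widehat p_L$. Finally, $\lambda$ is holomorphic if and only if $\widetilde\lambda$ is a holomorphic function on $\widetilde X$ (the fibrewise-linear description of holomorphic sections, already used for $\widetilde X$ in Section \ref{Sec:complex_case}), i.e.\ if and only if $j_{\widetilde M}^\ast\, d\widetilde\lambda = i\, d\widetilde\lambda$; using $d\widetilde\lambda = \widetilde{\mathfrak j^1\widehat\lambda}$ (Example \ref{ex:ext_diff}, since $d_D$ on Atiyah $0$-forms is $\mathfrak j^1$) and the fact that homogenization intertwines the transpose $\widehat j{}^\dag$ of $\widehat j$ on $\mathfrak J^1\widehat L$ with the transpose $j_{\widetilde M}^\ast$ of $j_{\widetilde M}$ on $T^\ast\widetilde M$ (a consequence of the pairing-compatibility in Theorem \ref{theor:homo}), this condition transports to $(\widehat j{}^\dag - i)\mathfrak j^1\widehat\lambda = 0$. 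Besides the parity remark above, the only place that needs a little care is this last translation, where one must match the signs so that holomorphicity of $\widetilde\lambda$ (vanishing of its $(0,1)$-part) becomes exactly the stated eigenvalue-$i$ equation for $\mathfrak j^1\widehat\lambda$.
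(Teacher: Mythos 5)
Your proof is correct and follows essentially the same route as the paper's: both pass to $\widetilde M$, characterize $\widetilde\lambda$ by the conditions $\mathcal L_\zeta f=f$ and $\mathcal L_{j_{\widetilde M}\zeta}f=if$, define $\widehat p_L$ by the same duality formula, and deduce the holomorphicity criterion from $(\widetilde j{}^\ast-i)\,d\widetilde\lambda=0$. The only difference is presentational: the paper writes the explicit formula $\widehat\lambda_{[\phi]}(\phi)=\langle\phi,\lambda_{\widehat p[\phi]}\rangle$ and checks its homogenization is $\widetilde\lambda$, whereas you match the two abstract characterizations directly, which forces you to observe (correctly) that the parity condition $h_{-1}^\ast f=-f$ is automatic from $(j_{\widetilde M}\zeta)f=if$.
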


The last equality in the statement deserves some explanations. Recall that $\widehat j$ is, in particular, an Atiyah $(1,1)$-tensor on $\widehat L$ and regard it as a vector bundle endomorphism $\widehat j : D\widehat L \to D \widehat L$. We denote by $\widehat j{}^\dag : \mathfrak J^1 \widehat L \to \mathfrak J^1 \widehat L$ the transpose of $\widehat j$ twisted by $L$, i.e.~its adjoint with respect to the duality pairing $\mathfrak J^1 \widehat L \otimes D \widehat L \to \widehat L$ (and similarly for all other Atiyah $(1,1)$-tensors).

\begin{proof}
Let $\lambda$ be a section of $L \to X$. First of all, notice that, if $\lambda$ is holomorphic, then it corresponds to a homogeneous holomorphic function $\widetilde \lambda$ on $\widetilde X$. If $\lambda$ is not holomorphic, one can still define a smooth complex function $\widetilde \lambda$ on $\widetilde X$ by the usual formula
\[
\widetilde \lambda (\epsilon) = \langle \epsilon, \lambda_{\widetilde p(\epsilon)} \rangle, \quad \epsilon \in \widetilde X.
\]
Functions $ f : \widetilde X \to \mathbb C$ of this form can be characterized as those smooth functions such that
\[
\mathcal L_Z f = f, \quad \text{and} \quad \mathcal L_{\overline Z} f = 0,
\]
or, equivalently,
\[
\mathcal L_\zeta f = f, \quad \text{and} \quad \mathcal L_{j_{\widetilde M}\zeta} f = i f.
\]

A (non-necessarily holomorphic) section $\lambda$ of $L$ does also define a section $\widehat \lambda$ of $\widehat L \otimes \mathbb C \to \widehat M$ as follows. Let $\phi \in L^\ast \smallsetminus 0$. We see $[\phi] \in \widehat M$ as the real line in $L^\ast_{\widehat p [\phi]}$ spanned by $\phi$. A point in $\widehat L \otimes \mathbb C$ over $[\phi]$ can then be seen as an $\mathbb R$-linear map $\mathsf{f} : [\phi] \to \mathbb C$. We define $\widehat \lambda_{[\phi]}$ by
\begin{equation}\label{eq:lambda^hat}
\widehat \lambda_{[\phi]} : [\phi] \to \mathbb C, \quad \phi \mapsto \langle \phi , \lambda_{\widehat p [\phi]} \rangle.
\end{equation}
It immediately follows from (\ref{eq:lambda^hat}) that the homogenization of $\widehat \lambda$ agrees with $\widetilde \lambda$. Hence, sections of $\widehat L \otimes \mathbb C$ of the form $\widehat \lambda$ can be characterized as those sections such that $(\widehat j \mathbb 1) \widehat \lambda = i \widehat \lambda$, as claimed.

Finally, we define $\widehat p_L $. Let $\phi \in L^\ast \smallsetminus 0$, and let $\mathsf f : [\phi] \to \mathbb C$ be a point in $\widehat L$ over $[\phi] \in \widehat M$. Then $\widehat p_L (\mathsf f)$ is the point in $L_{\widehat p [\phi]}$ implicitly defined by
\[
\langle \phi, \widehat p_L (\mathsf f) \rangle = \mathsf f (\phi).
\]
Clearly, $\widehat p_L (\mathsf f)$ is well-defined, i.e.~it is independent of the choice of $\phi$, and it is a regular complex vector bundle map. From (\ref{eq:lambda^hat}), pull-back section $\widehat p^\ast \lambda$ identifies with $\widehat \lambda$. The last part of the statement follows from the fact that $\lambda$ is holomorphic iff $\widetilde \lambda$ is so iff $(\widetilde j{}^\ast - i) d\widetilde \lambda = 0$.
 \end{proof}
 
 \begin{remark}
 Proposition \ref{prop:L^hat} shows that, while $L$ has no underlying real structure, i.e.~it is not the complexification of a real line bundle, in general, its pull-back $\widehat p{}^\ast L$ always is.
 \end{remark}

Notice that Proposition \ref{prop:L^hat} does not involve the complex structure on $M$ nor the holomorphic vector bundle structure on $L$, but only its complex vector bundle structure, except for what concerns holomorphic sections of $L$. We now discuss how does the holomorphic vector bundle structure on $L \to X$ reflects on $\widehat L$. To do this we consider the gauge algebroid of the real vector bundle $L \to M$. We denote it by $D_{\mathbb R} L$ in order to distinguish it from the \emph{holomorphic gauge algebroid} of the holomorphic line bundle $L \to X$ \cite{VW2016b}. The holomorphic vector bundle structure on $L \to X$ is equivalent to a fiber-wise complex structure $j_{\mathrm{fw}} : L \to L$, and an Atiyah $(1,1)$-tensor
\[
j_{DL} : D_{\mathbb R} L \to D_{\mathbb R} L,
 \]
such that \cite[Section 2.3]{VW2016}
\begin{itemize}
\item[$\triangleright$] $j_{DL}$ is \emph{almost complex}, i.e.~$(j_{DL})^2 = -\mathbb 1$,
\item[$\triangleright$] $j_{DL}$ is \emph{integrable}, i.e.~$\mathcal T_{j_{DL}} = 0$,
\item[$\triangleright$] the symbol $\sigma : D_{\mathbb R} L \to TM$ intertwines $j_{DL}$ and $j$,
\item[$\triangleright$] the restriction of $j_{DL}$ to endomorphisms $\operatorname{End}_{\mathbb R}\! L$ agrees with the map $h \mapsto j_{\mathrm{fw}} \circ h$.
\end{itemize}

Now, the holomorphic gauge algebroid $DL \to X$ of $L \to X$ is the complex subalgebroid of $D_{\mathbb R} L$ consisting of $\mathbb C$-linear derivations. Not only $DL \to X$ is a complex Lie algebroid, it is actually a holomorphic Lie algebroid. There is an alternative manifestation of the holomorphic gauge algebroid, denoted $D^{1,0} L$, which is equivalent to $DL$ up to a canonical isomorphism of holomorphic Lie algebroids. In order to define $D^{1,0} L$, consider first the complex (beware not \emph{holomorphic}) Lie algebroid $D_{\mathbb C} L \to M$ whose sections are complex derivations of $L \to M$, i.e.~$\mathbb C$-linear maps $\Delta : \Gamma (L) \to \Gamma (L)$ satisfying a Leibniz rule
\[
\Delta (f \lambda) = \sigma (\Delta) f \cdot \lambda + f \Delta \lambda, \quad f \in C^\infty (M, \mathbb C), \quad \lambda \in \Gamma (L),
\]
for some complex vector field $\sigma (\Delta) \in \mathfrak X (M) \otimes \mathbb C$ (the \emph{symbol} of $\Delta)$. We denote by $D^{1,0} L$ the complex subbundle in $D_{\mathbb C} L$ consisting of derivations whose symbol is in $\mathfrak X^{1,0} (X) = \Gamma (T^{1,0} X)$. There is an isomorphism $DL \simeq D^{1,0} L$ given by
\[
D^{1,0} L \to DL, \quad \Delta \mapsto \Delta + \overline{\partial}_{\overline{\sigma (\Delta)}}.
\]
We refer to \cite{VW2016b} for more details.  

Denote by $D_{\mathbb R} \widehat L$ the gauge algebroid of the real line bundle $\widehat L \to \widehat M$. 

\begin{proposition} \label{prop:hom_der_compl}
There is a one-to-one correspondence, denoted $\Delta \mapsto \widehat \Delta$, between smooth sections $\Delta$ of $D^{1,0}L$ and sections $\widehat \Delta$ of $D_{\mathbb R} \widehat L$ such that $[\widehat j \mathbb 1, \widehat \Delta] = 0$. Geometrically, there is a canonical regular complex vector bundle map
\[
\begin{array}{c}
\xymatrix{ D_{\mathbb R} \widehat L \ar[r]^-{\widehat p_D} \ar[d] & D^{1,0} L \ar[d] \\
\widehat M \ar[r]^-{\widehat p} & M
}
\end{array}.
\]
In particular $D_{\mathbb R} \widehat L $ is canonically isomorphic to the pull-back $\widehat p{}^\ast D^{1,0}L$. The pull-back of sections identifies section $\Delta$ of $D^{1,0}L$ with $\widehat \Delta$. Finally, derivation $\Delta$ is holomorphic iff $[ \widehat j, \widehat \Delta]^{FN}_D = 0$.
\end{proposition}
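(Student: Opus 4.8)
The plan is to prove this in strict parallel with the companion Proposition \ref{prop:L^hat}, replacing sections of $L$ by derivations: instead of the homogenization of sections I would use the homogenization of $(0,1)$-Atiyah tensors (derivations) supplied by Theorem \ref{theor:homo}. I use throughout that $\widetilde M$ is simultaneously the homogenization of the real line bundle $\widehat L \to \widehat M$ and, as $\widetilde X = (\widetilde M, j_{\widetilde M})$, the complex homogenization of $L \to X$, with $Z = \tfrac12(\zeta - i j_{\widetilde M}\zeta)$. First I would run the real homogenization of $\widehat L$: by Theorem \ref{theor:homo} with $(l,m)=(0,1)$, sections $\widehat\Delta$ of $D_{\mathbb R}\widehat L$ correspond bijectively to vector fields $\widetilde{\widehat\Delta}$ on $\widetilde M$ with $\mathcal L_\zeta\widetilde{\widehat\Delta}=0$, and this correspondence intertwines the commutator of derivations with the Lie bracket of vector fields. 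Since $\widehat j\mathbb 1$ homogenizes to $j_{\widetilde M}\zeta$ (established just before Proposition \ref{prop:L^hat}), the condition $[\widehat j\mathbb 1,\widehat\Delta]=0$ is equivalent to $[j_{\widetilde M}\zeta,\widetilde{\widehat\Delta}]=0$; so such $\widehat\Delta$ correspond exactly to the vector fields $V$ on $\widetilde M$ with $\mathcal L_\zeta V = \mathcal L_{j_{\widetilde M}\zeta}V = 0$, equivalently $[Z,V]=[\overline Z,V]=0$.

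Next I would match this class with smooth sections of $D^{1,0}L$ using the complex homogenization of $L \to X$. A real vector field $V$ on $\widetilde X$ annihilated by $Z$ and $\overline Z$ is $\mathbb C^\times$-invariant, hence preserves the space $\{\widetilde\lambda : \lambda\in\Gamma(L)\}$ of functions characterized by $\mathcal L_Z f = f$ and $\mathcal L_{\overline Z}f=0$ (see the proof of Proposition \ref{prop:L^hat}); therefore $V$ induces a $\mathbb C$-linear derivation of the complex line bundle $L$ with symbol $\widetilde p_\ast V$, i.e.~a smooth section of $DL$, which under the canonical isomorphism $DL\simeq D^{1,0}L$ recalled above gives a smooth section $\Delta$ of $D^{1,0}L$. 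This is a bijection, and composing with the one of the previous paragraph produces the correspondence $\Delta\leftrightarrow\widehat\Delta$.

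For the geometric statement I would construct $\widehat p_D$ in the style of $\widehat p_L$ from Proposition \ref{prop:L^hat} and of $p_T$ from Theorem \ref{theor:homo}: for $\phi\in L^\ast\smallsetminus 0$ and $\delta\in(D_{\mathbb R}\widehat L)_{[\phi]}$, extend $\delta$ $\mathbb C$-linearly and form the derivation $\lambda\mapsto\widehat p_L(\delta\widehat\lambda)$ of $L$, whose symbol is $\widehat p_\ast\sigma(\delta)$, using the section $\widehat\lambda$ and the map $\widehat p_L$ of Proposition \ref{prop:L^hat}. Using $(\widehat j\mathbb 1 - i)\widehat\lambda=0$ one checks that this is independent of the choice of $\phi$ on the line $[\phi]$ and defines a regular complex vector bundle map $D_{\mathbb R}\widehat L\to DL$, which postcomposed with $DL\simeq D^{1,0}L$ gives the desired regular map $\widehat p_D:D_{\mathbb R}\widehat L\to D^{1,0}L$ over $\widehat p$. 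For dimension reasons the associated square is a pull-back square, whence $D_{\mathbb R}\widehat L\simeq\widehat p^\ast D^{1,0}L$, and pull-back of sections recovers $\widehat\Delta=\widehat p_D^\ast\Delta$.

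Finally, $\Delta$ is holomorphic iff the vector field $W:=\widetilde{\widehat\Delta}$ on $\widetilde X$ is an infinitesimal automorphism of $j_{\widetilde M}$, i.e.~$\mathcal L_W j_{\widetilde M}=0$. Writing $\mathcal L_W j_{\widetilde M} = [W,j_{\widetilde M}]^{FN}$ and using $j_{\widetilde M} = \widetilde{\widehat j}$ together with (\ref{eq:hom_Frol}), this equals $\widetilde{[\widehat\Delta,\widehat j]}{}^{FN}_D$, so $\Delta$ is holomorphic iff $[\widehat j,\widehat\Delta]^{FN}_D=0$. I expect the one genuinely delicate point to be the symbol compatibility in the geometric step — checking that the naturally occurring $DL$-valued map, corrected through $DL\simeq D^{1,0}L$, remains regular and $U(1)$-equivariant; the rest is the routine translation already packaged by the homogenization dictionary and by Proposition \ref{prop:L^hat}.
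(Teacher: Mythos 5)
Your proposal is correct and follows essentially the same route as the paper: both proofs identify sections of $D^{1,0}L$ with the vector fields on $\widetilde M$ invariant under both $\zeta$ and $j_{\widetilde M}\zeta$ (equivalently, commuting with $Z$ and $\overline Z$), recognize these as homogenizations of derivations $\widehat\Delta$ of $\widehat L$ satisfying $[\widehat j \mathbb 1, \widehat\Delta]=0$, define $\widehat p_D$ pointwise through the sections $\widehat\lambda$ and $\widehat p_L$, and obtain the holomorphicity criterion from the identity (\ref{eq:hom_Frol}) applied to $[\,\widetilde{\widehat\Delta}, j_{\widetilde M}]^{FN}$. The only differences are cosmetic: you traverse the correspondence starting from $\widehat\Delta$ rather than from $\Delta$, and you package $\widehat p_D$ through $DL$ and the isomorphism $DL\simeq D^{1,0}L$ where the paper uses the $(1,0)$-part $\Delta^{1,0}=\tfrac12(\Delta-i\widehat j\Delta)$ directly.
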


\begin{proof}
Let $\Delta$ be a section of $D^{1,0} L \to X$. If $\Delta$ is holomorphic, then it corresponds to a homogeneous holomorphic vector field $\widetilde \Delta \in \mathfrak X^{1,0} (\widetilde X)$ on $\widetilde X$. If $\Delta$ is not holomorphic, one  uniquely defines a smooth section of $T^{1,0} \widetilde X$ by the same formula
\[
\widetilde \Delta (\widetilde \lambda) = \widetilde{\Delta \lambda}, \quad \lambda \in \Gamma (L).
\]
Sections of $T^{1,0} \widetilde X$ of this form can be characterized as those sections
\[
U = \frac{1}{2} (\upsilon - i j_{\widetilde M} \upsilon), \quad \upsilon \in \mathfrak X (\widetilde M),
\]
such that
\[
\mathcal L_Z U = \mathcal L_{\overline Z} U = 0,
\]
 or, equivalently,
\[
\mathcal L_\zeta  U = \mathcal L_{j_{\widetilde M} \zeta} U = 0.
\]
 or, yet in other terms,
\[
\mathcal L_\zeta  \upsilon = \mathcal L_{j_{\widetilde M} \zeta} \upsilon = 0.
\]

So $\upsilon$ is the homogenization of a derivation $\widehat \Delta$ of $\widehat L$ such that $[\widehat j \mathbb 1, \widehat \Delta] = 0$. Notice that $\widehat \Delta$ is uniquely determined by the condition that
\begin{equation}\label{eq:Delta^hat}
\widehat \Delta{}^{1,0} \widehat \lambda = \widehat{\Delta \lambda}, \quad \text{where} \quad \widehat \Delta{}^{1,0} = \frac{1}{2} \left(\widehat \Delta -i \widehat j \widehat \Delta  \right)
\end{equation}
for all $\lambda \in \Gamma (L)$.

Now, we define $\widehat p_D$. It is clear how to do this. We map a derivation $\Delta \in D_{\mathbb R} \widehat L$ over a point $[\phi] \in \widehat M$, to derivation $\widehat p_D (\Delta) \in D_{\widehat p [\phi]}^{1,0} L$ implicitly defined by
\begin{equation}\label{eq:p_D^hat}
\langle \phi, \widehat p_D (\Delta) \lambda \rangle = (\Delta^{1,0} \widehat \lambda ) (\phi) \quad \text{where} \quad \Delta^{1,0} = \frac{1}{2} \left( \Delta - i \widehat j \Delta \right),
\end{equation}
for all $\lambda \in \Gamma (L)$. Here we interpret the point $\Delta^{1,0} \widehat \lambda \in \widehat L_{[\phi]} \otimes \mathbb C$ as a linear map $[\phi] \to \mathbb C$.

Definitions (\ref{eq:Delta^hat}) and (\ref{eq:p_D^hat}) immediately imply that pull-back section $\widehat p_D^\ast \Delta$ identifies with $\widehat \Delta$. The last part of the statement follows from the following chain of equivalences 

\[
\begin{aligned}
& \text{a section $\Delta$ of $D^{1,0}L$ is holomorphic} \\
& {} \Leftrightarrow \text{$\widetilde \Delta$ is a holomorphic vector field} \\
& {} \Leftrightarrow\mathcal L_{\widetilde \Delta} j_{\widetilde M}= [\widetilde \Delta, j_{\widetilde M}]^{FN} = 0 \\
& {} \Leftrightarrow[ j_{\widetilde M}, 2 \operatorname{Re} \widetilde \Delta]^{FN} = 0.
\end{aligned}
\]

\end{proof}

Denote by $\mathfrak J^1_{\mathbb R} \widehat L$ the first jet bundle of $\widehat L \to \widehat M$, and by $\mathfrak J^1 L$ the holomorphic first jet bundle of $L$. Recall that $\mathfrak J^1_{\mathbb R} \widehat L = \operatorname{Hom} (D_{\mathbb R} \widehat L, \widehat L)$. Similarly, $\mathfrak J^1 L = \operatorname{Hom}_{\mathbb C} (D^{1,0} L, L)$. So, a simple \emph{duality argument} allows one to prove the following

\begin{proposition}\label{prop:hom_jet_compl}
There is a one-to-one correspondence, denoted $\psi \mapsto \widehat \psi$, between smooth sections $\psi$ of $\mathfrak J^1 L$ and section $\widehat \psi$ of $\mathfrak J^1_{\mathbb R} \widehat L$ such that $(\mathcal L_{\widehat j \mathbb 1} - \widehat j{}^\dag)\widehat \psi = 0$. Geometrically, there is a canonical regular complex vector bundle map
\[
\begin{array}{c}
\xymatrix{ \mathfrak J^1_{\mathbb R} \widehat L \ar[r]^-{\widehat p_{\mathfrak J^1}} \ar[d] & \mathfrak J^1 L \ar[d] \\
\widehat M \ar[r]^-{\widehat p} & M
}
\end{array}.
\]
In particular $ \mathfrak J^1_{\mathbb R} \widehat L$ is canonically isomorphic to the pull-back $\widehat p{}^\ast \mathfrak J^1 L$. The pull-back of sections identifies section $\psi$ of $\mathfrak J^1 L$ with $\widehat \psi$. Finally section $\psi$ of $\mathfrak J^1 L$ is holomorphic iff 
\[
\mathcal L_{\widehat j} \widehat \psi - d_D \widehat j{}^\dag \widehat \psi = d_D \widehat \psi + \mathcal L_{\widehat j} \widehat j{}^\dag \widehat \psi = 0
\]
(where $\mathcal L_{\widehat j}$ is the Lie derivative of Atiyah forms along the Atiyah $(1,1)$-tensor $\widehat j$).
\end{proposition}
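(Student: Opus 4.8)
The plan is to exploit the duality between $\mathfrak J^1 L$ and $D^{1,0}L$ together with Proposition \ref{prop:hom_der_compl}, exactly as the phrase ``a simple duality argument'' in the text suggests. Recall $\mathfrak J^1_{\mathbb R}\widehat L = \operatorname{Hom}(D_{\mathbb R}\widehat L,\widehat L)$ and $\mathfrak J^1 L = \operatorname{Hom}_{\mathbb C}(D^{1,0}L, L)$. Since $\widehat p{}^\ast D^{1,0}L \simeq D_{\mathbb R}\widehat L$ by Proposition \ref{prop:hom_der_compl} and $\widehat p{}^\ast L \simeq \widehat L\otimes\mathbb C$ by Proposition \ref{prop:L^hat}, one obtains $\widehat p{}^\ast \mathfrak J^1 L \simeq \operatorname{Hom}_{\mathbb C}(\widehat p{}^\ast D^{1,0}L, \widehat p{}^\ast L) \simeq \operatorname{Hom}_{\mathbb C}(D_{\mathbb R}\widehat L\otimes\mathbb C,\ \widehat L\otimes\mathbb C)$. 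The point is then to identify, inside the latter, a real form canonically isomorphic to $\mathfrak J^1_{\mathbb R}\widehat L = \operatorname{Hom}_{\mathbb R}(D_{\mathbb R}\widehat L,\widehat L)$. First I would define $\widehat p_{\mathfrak J^1}$ directly by dualizing $\widehat p_D$: given $\widehat\psi \in (\mathfrak J^1_{\mathbb R}\widehat L)_{[\phi]}$, i.e.\ an $\mathbb R$-linear map $D_{\mathbb R}\widehat L|_{[\phi]} \to \widehat L_{[\phi]}$, set $\widehat p_{\mathfrak J^1}(\widehat\psi)(\Delta) \in L_{\widehat p[\phi]}$ to be implicitly defined by $\langle\phi,\widehat p_{\mathfrak J^1}(\widehat\psi)(\Delta)\rangle = (\widehat\psi\, \Delta^{1,0})(\phi)$ for $\Delta\in D^{1,0}_{\widehat p[\phi]}L$, mirroring \eqref{eq:p_D^hat}; here one must check this is well-defined, $\mathbb C$-linear in $\Delta$, and independent of the representative $\phi$.

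Next I would verify that $\widehat p_{\mathfrak J^1}$ is a regular (fiberwise iso) complex vector bundle map over $\widehat p$. Injectivity is formal from the nondegeneracy of the pairing $\mathfrak J^1_{\mathbb R}\widehat L\otimes D_{\mathbb R}\widehat L\to\widehat L$; surjectivity (equivalently, a dimension count) follows because $\operatorname{rk}_{\mathbb R}\mathfrak J^1_{\mathbb R}\widehat L = 2(\dim_{\mathbb R}\widehat M + 1)$ matches $2\operatorname{rk}_{\mathbb C}\mathfrak J^1 L = 2(\dim_{\mathbb C}X+1)$, using $\dim_{\mathbb R}\widehat M = 2\dim_{\mathbb C}X - 1$ plus the circle direction. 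Hence the square is a pullback diagram and $\mathfrak J^1_{\mathbb R}\widehat L\simeq\widehat p{}^\ast\mathfrak J^1 L$. The characterization of the image sections $\widehat\psi$ by the equation $(\mathcal L_{\widehat j\mathbb 1}-\widehat j{}^\dag)\widehat\psi = 0$ is then obtained by dualizing the corresponding condition $[\widehat j\mathbb 1,\widehat\Delta]=0$ from Proposition \ref{prop:hom_der_compl}: contracting $\widehat\psi$ with a derivation of the form $\widehat\Delta$ and using that $\widehat j{}^\dag$ is the $L$-twisted transpose of $\widehat j$, one converts the bracket condition on the derivation side into the stated first-order condition on the jet side. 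Equivalently, one checks that homogeneous $1$-jet-valued homogeneous tensors on $\widetilde M$ are precisely those annihilated by $\mathcal L_\zeta - \mathbb 1$ and by $\mathcal L_{j_{\widetilde M}\zeta} - (\text{the relevant twist})$, which translates back via Theorem \ref{theor:homo}.

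For the holomorphy statement, I would proceed through the homogenization: $\psi\in\Gamma(\mathfrak J^1 L)$ is holomorphic iff its homogenization $\widetilde\psi$, a $1$-form on $\widetilde X$ (or rather a section of the relevant pullback), is a holomorphic $1$-jet, iff $\mathcal L_{\widetilde\psi}$-type compatibility with $j_{\widetilde M}$ holds, i.e.\ iff the $(0,1)$-part of $\widetilde\psi$ (in the appropriate sense on jets) vanishes. Writing this vanishing in terms of the real/imaginary decomposition $Z = \tfrac12(\zeta - i j_{\widetilde M}\zeta)$ and pushing down along $p_{T^\ast}$ and $\widehat p_{\mathfrak J^1}$ yields an equation purely on $\widehat L$; the two displayed equations $\mathcal L_{\widehat j}\widehat\psi - d_D\widehat j{}^\dag\widehat\psi = 0$ and $d_D\widehat\psi + \mathcal L_{\widehat j}\widehat j{}^\dag\widehat\psi = 0$ should appear as the real and imaginary parts of the single $\mathbb C$-linear condition, using the Cartan-type identity $\mathcal L_{\widehat j} = d_D\circ i_{\widehat j} + i_{\widehat j}\circ d_D$ on Atiyah forms (with $i_{\widehat j}$ the degree-zero contraction). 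I expect the main obstacle to be bookkeeping: carefully matching the factor $\tfrac12$'s, the twist by $L$ in $\widehat j{}^\dag$ versus the untwisted transpose, and the sign/normalization in $\mathcal L_{\widehat j\mathbb 1}$ so that the two real equations come out exactly as stated rather than off by a sign or a factor; the conceptual content is entirely routine duality once Propositions \ref{prop:L^hat} and \ref{prop:hom_der_compl} are in hand.
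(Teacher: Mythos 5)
Your proposal follows essentially the same route as the paper's (itself only sketched) proof: define $\widehat p_{\mathfrak J^1}$ by dualizing $\widehat p_D$ from Proposition \ref{prop:hom_der_compl}, characterize the sections in the image via the homogenization to $\widetilde X$ and the condition $(\mathcal L_{\widehat j \mathbb 1} - \widehat j{}^\dag)\widehat\psi = 0$, and deduce the holomorphy criterion from the fact that a real form is the real part of a holomorphic form iff the two displayed equations hold. The only blemish is the rank bookkeeping: $\operatorname{rk}_{\mathbb R}\mathfrak J^1_{\mathbb R}\widehat L = \dim_{\mathbb R}\widehat M + 1 = 2\dim_{\mathbb C}X + 2 = \operatorname{rk}_{\mathbb R}\mathfrak J^1 L$ (your stated formula is off by a factor of $2$, and $\dim_{\mathbb R}\widehat M = 2\dim_{\mathbb C}X + 1$), but the conclusion that the ranks agree, hence that the fiberwise injection is an isomorphism, is correct.
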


\begin{proof}
We only sketch the proof leaving the details to the reader. Let $\psi$ be a (non-necessarily holomorphic) section of $\mathfrak J^1 L $. There is a unique $1$-form $\Omega^{1,0} (\widetilde X)$ such that 
\[
\langle \widetilde \psi, \widetilde \Delta \rangle = \widetilde{\langle \Delta, \psi \rangle}, 
\]
for all sections $\Delta$ of $D^{1,0} L$. Forms in  $\Omega^{1,0} (\widetilde X)$ of this kind can be characterized as those forms
\[
\psi = \frac{1}{2} (\varphi - i j_{\widetilde M}^\ast \varphi), \quad \varphi \in \Omega^1 (\widetilde M),
\]
such that
\[
\mathcal L_\zeta  \varphi - \varphi = \mathcal L_{j_{\widetilde M} \zeta} \varphi - j_{\widetilde M}^\ast \varphi = 0.
\]
So $\varphi$ is the homogenization of an Atiyah $1$-form $\widehat \psi$ on $\widehat L$ such that $(\mathcal L_{\widehat j \mathbb 1} - \widehat j{}^\dag)  \widehat \psi = 0$. Actually, $\widehat \psi$ is uniquely determined by the condition that
\begin{equation}
\langle \widehat \psi{}^{1,0}, \widehat \Delta{}^{1,0} \rangle = \widehat{\langle \psi, \Delta \rangle}, \quad \text{where} \quad \widehat \psi{}^{1,0} = \frac{1}{2} \left(\widehat \psi -i \widehat j{}^\dag \widehat \psi  \right)
\end{equation}
for all sections $\Delta$ of $DL$.

It is clear how to define $\widehat p_{\mathfrak J^1}$. Any point $\psi$ in $\mathfrak J^1_{\mathbb R} \widehat L$ is of the form
\[
2 \operatorname{Re} \mathfrak j_{[\phi]}^1 \widehat \lambda
\]
for some $\lambda \in \Gamma (L)$, and $[\phi] \in \widehat M$. We put
\[
\widehat p_{\mathfrak J^1} (\psi) = \mathfrak j^{1,0}_{\widehat p[\phi]} \lambda
\]
(see \cite{VW2016b} for the precise definition of $\mathfrak j^{1,0}$). Equivalently
\[
\langle \widehat p_{\mathfrak J^1} (\psi), \Delta \rangle = \langle \psi, \widehat p_D|_{D_{[\phi]} \widehat L}^{-1} \Delta \rangle,
\]
for all $\Delta \in D^{1,0} L$. 

The last part of the statement follows from the fact that a real form $\omega$ on a complex manifold $(M, j)$ is the real part of a holomorphic form iff
\[
\mathcal L_j \omega - d j^\ast \omega = d \omega + \mathcal L_j j^\ast \omega = 0.
\]

\end{proof}

We summarize the above discussion with the following commutative diagrams

\[
\xymatrix@C=15pt@R=15pt{ & & & \mathbb C_{\widetilde X} \ar[dd] \ar[dlll]_-{\mathbb R^\times} \ar[ddll]^-{\mathbb C^\times} \\
 \widehat L \otimes \mathbb C \ar[dd] \ar[dr]^-{U(1)} & & & \\
 & L \ar[dd] & & \widetilde X \ar[dlll]|!{[ll];[lldd]}\hole \ar[ddll]  \\
 \widehat M \ar[dr] & & & \\
 & X & &
} \quad 
\xymatrix@C=15pt@R=15pt{ & & & T^{1,0}\widetilde X \ar[dd] \ar[dlll]_-{\mathbb R^\times} \ar[ddll]^-{\mathbb C^\times} \\
 D_{\mathbb R} \widehat L \ar[dd] \ar[dr]^-{U(1)} & & & \\
 & D^{1,0} L \ar[dd] & & \widetilde X \ar[dlll]|!{[ll];[lldd]}\hole \ar[ddll]  \\
 \widehat M \ar[dr] & & & \\
 & X & &
} \quad 
\xymatrix@C=15pt@R=15pt{ & & & T^\ast{}^{1,0} \widetilde X \ar[dd] \ar[dlll]_-{\mathbb R^\times} \ar[ddll]^-{\mathbb C^\times} \\
 \mathfrak J^{1}_{\mathbb R} \widehat L \ar[dd] \ar[dr]^-{U(1)} & & & \\
 & \mathfrak J^1 L \ar[dd] & & \widetilde X \ar[dlll]|!{[ll];[lldd]}\hole \ar[ddll]  \\
 \widehat M \ar[dr] & & & \\
 & X & &
}
\]

The top horizontal arrows are principal bundles in the category of vector bundles with regular vector bundle maps as morphisms.

\begin{remark}
It follows from Propositions \ref{prop:hom_der_compl} and \ref{prop:hom_jet_compl}, that, for all $l,m$ there is a one-to-one correspondence between smooth/holomorphic sections $T$ of the \emph{bundle of holomorphic Atiyah tensors}
\[
(D^{1,0}L)^{\ast \otimes_{\mathbb C} l} \otimes_{\mathbb C} (\mathfrak J^1 L)^{\ast \otimes_{\mathbb C} m} \otimes_{\mathbb C} L
\]
and sections $\widehat T$ of
\[
(D_{\mathbb R} \widehat L)^{\ast \otimes l} \otimes (\mathfrak J_{\mathbb R}^1 \widehat L)^{\ast \otimes m} \otimes \widehat L \otimes \mathbb C
\]
satisfying certain identities involving $\widehat j$. For instance, in the case when $T = J$ is a holomorphic, skew-symmetric biderivation such that $[J,J]^{SJ} = 0$, these identities say that $(\widehat J, \widehat j)$ is a Jacobi-Nijenhuis structure \cite{VW2016b}.

\end{remark}
     
\subsection{Homogenization of line bundle groupoids}
In Section \ref{Sec:Cont_Group} we will deal with contact groupoids, i.e.~Lie groupoids $\mathcal G \rightrightarrows M$ equipped with a compatible contact structure $H \subset T \mathcal G$. In this situation, the normal bundle $L = T\mathcal G / H$ is equipped with a groupoid structure so that $L \to \mathcal G$ is a \emph{VB-groupoid}, i.e.~a \emph{vector bundle in the category of Lie groupoids}. In this section we discuss homogenization in this setting, and show that the homogenization of $L$ is a Lie groupoid itself, actually a \emph{principal $\mathbb R^\times$-bundle in the category of Lie groupoids} (see also \cite{BGG2015}). We begin recalling the definition of a VB-groupoid. For more details we refer to \cite{M1992, M2000}, see also \cite{GM2015, BCdH2016}.
	
\begin{definition}\label{def:VB_group}
A \emph{VB-groupoid} $(\Omega \rightrightarrows E; \mathcal G \rightrightarrows M)$ is a vector bundle in the category of Lie groupoids, i.e.~a diagram
\[
\xymatrix{ \Omega \ar[d] \ar@<+2pt>[r] \ar@<-2pt>[r]& E \ar[d] \\
\mathcal G \ar@<+2pt>[r] \ar@<-2pt>[r] & M
}
\]
denoted shortly  by $(\Omega, E; \mathcal G, M)$, such that
\begin{itemize}
\item[(VB1)] rows are Lie groupoids,
\item[(VB2)] columns are vector bundles,
\item[(VB3)] all vector bundle structure maps are Lie groupoid maps.
\end{itemize}
The \emph{core} of $(\Omega, E; \mathcal G, M)$ is vector bundle $C := 1^\ast (\ker \bar s )$.\end{definition}

\begin{remark}
Condition (VB3) in Definition \ref{def:VB_group} can be partially relaxed, requiring only that multiplication by $r $ in the fibers of $\Omega \to \mathcal G$ is a Lie groupoid map, for all $r \in \mathbb R$. This results often in great simplifications.
\end{remark}

\begin{remark}\label{rem:Euler_mult}
The Euler vector field of a VB-groupoid is multiplicative.
\end{remark}

\begin{example}[Tangent VB-groupoid]
Let $\mathcal G \rightrightarrows M$ be a Lie groupoid. Then $(T \mathcal G \rightrightarrows TM; \mathcal G \rightrightarrows M)$ is a VB-groupoid called the \emph{tangent VB-groupoid}. The structure maps of the top groupoid $T \mathcal G \rightrightarrows TM$ are the tangent maps to the structure maps of the bottom groupoid $\mathcal G \rightrightarrows M$. The core of $(T \mathcal G \rightrightarrows TM; \mathcal G \rightrightarrows M)$ is canonically isomorphic to the Lie algebroid $A$ of $\mathcal G$.
\end{example}

\begin{example}
Let $(\Omega \rightrightarrows E; \mathcal G \rightrightarrows M)$ be a VB-groupoid, with core $C$. Then $(\Omega^\ast \rightrightarrows C^\ast; \mathcal G \rightrightarrows M)$ is a VB-groupoid with core $E^\ast$, called the \emph{dual VB-groupoid}. For more details about the the dual VB-groupoid see \cite{M2005}. In particular, given a Lie groupoid $\mathcal G \rightrightarrows M$ with Lie algebroid $A$, $(T^\ast \mathcal G \rightrightarrows A^\ast, \mathcal G \rightrightarrows M)$ is a VB-groupoid with core $T^\ast M$, called the \emph{cotangent VB-groupoid}.
\end{example}

\begin{example}
Let $\mathcal G \rightrightarrows M$ be a Lie groupoid acting on a vector bundle $q : E \to M$. Denote by $s,t,1$ source, target, and unit of $\mathcal G$. Then $(s^\ast E \rightrightarrows E; \mathcal G  \rightrightarrows M)$ is a VB-groupoid with trivial core. The structure maps of $s^\ast E \rightrightarrows E$ are the following:
\begin{itemize}
\item[$\triangleright$] the source $\tilde s$ and target $\tilde t$ are $\tilde s (g,e) = e$ and $\tilde t (g, e) = g.e$; 
\item[$\triangleright$] the multiplication $\tilde m$ is $\tilde m ((g,e),(g',e')) = (gg',e')$;
\item[$\triangleright$] the unit $\tilde 1$ and the inversion are $\tilde 1 (e) = (1_{q(e)}, e)$ and $(g,e)^{-1} = (g^{-1}, g.e)$;
\end{itemize}
where $g,g' \in \mathcal G$, and $e,e' \in E$. All VB-groupoids with trivial core are of this kind. 
\end{example}

\begin{example}
Let $\mathcal G \rightrightarrows M$ be a Lie groupoid acting on a vector bundle $C$. Then $(t^\ast C  \rightrightarrows 0_M; \mathcal G  \rightrightarrows M)$ is a VB-groupoid with core $C$. Here $0_M \to M$ is the trivial vector bundle. The structure maps in $t^\ast C \rightrightarrows 0_M$ are the following
\begin{itemize}
\item[$\triangleright$] the source $\tilde s$ and target $\tilde t$ are $\tilde s (g,c) = 0_{s(g)}$ and $\tilde t (g, c) = 0_{t(g)}$; 
\item[$\triangleright$] the multiplication $\tilde m$ is $\tilde m ((g,c),(g',c')) = (gg',c+ g.c')$;
\item[$\triangleright$] the unit $\tilde 1$ and the inversion are $\tilde 1 (0_x) = (1_x, 0_x)$ and $(g,c)^{-1} = (g^{-1}, -g^{-1}.c)$;
\end{itemize}
where $g,g' \in \mathcal G$, $c,c' \in C$, and $x \in M$. All VB-groupoids $(\Omega  \rightrightarrows E; \mathcal G  \rightrightarrows M)$ with trivial \emph{side bundle} $E$ are of this kind. Additionally, VB-groupoids with trivial side bundle and VB-groupoids with trivial core are in duality. More precisely, let $(s^\ast E  \rightrightarrows E; \mathcal G  \rightrightarrows M)$ be a VB-groupoid with trivial core corresponding to an action of $\mathcal G$ on vector bundle $E$. Then the dual VB-groupoid is (canonically isomorphic to) the VB-groupoid $(t^\ast E^\ast  \rightrightarrows 0_M; \mathcal G  \rightrightarrows M)$ with trivial side bundle associated to the action of $\mathcal G$ on $E^\ast$, and vice-versa.
\end{example}

\begin{remark}\label{rem:LB-groupoids}
Let $(\mathcal L , E ; \mathcal G, M)$ be a VB-groupoid such that $\mathcal L \to \mathcal G$ is a line bundle. As the source and target maps in a Lie groupoid are always submersions by definition, it follows that either $E = 0_M$, $C = L$ is a line bundle, and $\mathcal L = t^\ast L$, or $E = L$ is a line bundle, $C = 0$, and $\mathcal L = s^\ast L$. In the former case we speak about an \emph{LB-groupoid}. In the following we discuss homogenization of LB-groupoids.
\end{remark}

Let $\mathcal G \rightrightarrows M$ be a Lie groupoid acting on a line bundle $L \to M$, and let $(t^\ast L  \rightrightarrows 0_M; \mathcal G  \rightrightarrows M)$ be the corresponding LB-groupoid. Take the dual VB-groupoid $(s^\ast L^\ast  \rightrightarrows L^\ast; \mathcal G  \rightrightarrows M)$. In a VB-groupoid all structure maps of the top groupoid are vector bundle maps, in particular they map the zero section to the zero section. Hence, if we remove the zero section from $s^\ast L^\ast$ we get a (new) Lie groupoid over $L^\ast \smallsetminus 0$. We put $\widetilde{\mathcal G} := s^\ast L^\ast \smallsetminus 0= s^\ast (L \smallsetminus 0)$ and $\widetilde M = L^\ast \smallsetminus 0$. So the resulting Lie groupoid is $\widetilde{\mathcal G} = s^\ast \widetilde M \rightrightarrows \widetilde M$. Additionally, it is clear that diagram
\[
\xymatrix{\widetilde{\mathcal G} \ar[d] \ar@<+2pt>[r] \ar@<-2pt>[r]& \widetilde M \ar[d] \\
\mathcal G \ar@<+2pt>[r] \ar@<-2pt>[r] & M
}
\]
is a principal $\mathbb R^\times$-bundle in the category of Lie groupoids, i.e.
\begin{itemize}
\item[$\triangleright$] rows are Lie groupoids,
\item[$\triangleright$] columns are principal $\mathbb R^\times$-bundles,
\item[$\triangleright$] $\mathbb R^\times$ acts on $\widetilde{\mathcal G}$ by Lie groupoid automorphisms.
\end{itemize}
Actually, all principal $\mathbb R^\times$-bundles in the category of Lie groupoids can be obtained in this way. Notice that, by linearity, the action of $\mathcal G$ on $L^\ast$ restricts to an action on $\widetilde M$ by principal bundle automorphisms, and $\widetilde{\mathcal G} \rightrightarrows \widetilde M$ is the corresponding action groupoid.

	\subsection{Multiplicative Atiyah tensors}

Multiplicative structures on Lie groupoids play an important role in Poisson geometry. Recently, Bursztyn, and Drummond, elaborating on earlier works on multiplicative forms, multivectors, and $(1,1)$-tensors, proposed a general scheme to deal with generic multiplicative tensors \cite{BD2017}. Their idea consists in viewing a tensor $\mathcal T$ on a groupoid as a real valued function $\mu_{\mathcal T}$ on a certain fibered product which is a groupoid itself (in \cite{LSX2008} the same idea has been used to discuss multiplicative bivectors, see also \cite{ISX2012}). So, it makes sense to declare that $\mathcal T$ is \emph{multiplicative} if $f_{\mathcal T}$ is a \emph{multiplicative function} i.e.~it is a Lie groupoid cocycle. In this section, we extend this idea to Atiyah tensors and we relate the multiplicativity of an Atiyah tensor to the multiplicativity of its homogenization. Our main examples will come from multiplicative contact structures, seen as symplectic Atiyah forms (see Example \ref{ex:contact}).

We begin recalling Bursztyn and Drummond idea. Let $\mathcal G \rightrightarrows M$ be a Lie groupoid with Lie algebroid $A$, and let $\mathcal T$ be an $(l,m)$-tensor on it, i.e.~a vector bundle map 
\[
(T \mathcal G )^{\otimes l} \otimes (T^\ast \mathcal G)^{\otimes m} \to \mathbb R_{\mathcal G}.
\]
Clearly, $\mathcal T$ can be regarded as a smooth function on the fibered product
\[
\mathfrak T^{l,m} \mathcal G  := \underset{\text{$l$ times}}{\underbrace{T\mathcal G  \times_{\mathcal G } \cdots \times_{\mathcal G }  T\mathcal G }} \times_{\mathcal G } \underset{\text{$m$ times}}{\underbrace{T^\ast \mathcal G  \times_{\mathcal G } \cdots \times_{\mathcal G } T^\ast \mathcal G }},
\]
i.e.~$f_{\mathcal T} : \mathfrak T^{l,m} \mathcal G \to \mathbb R$.

It is easy to see that $\mathfrak T^{l,m} \mathcal G$ is a Lie groupoid itself. The manifold of objects is the fibered product
\[
(\mathfrak T^{l,m} \mathcal G)_0 :=  \underset{\text{$l$ times}}{\underbrace{TM  \times_{M } \cdots \times_{M}  TM }} \times_{M } \underset{\text{$m$ times}}{\underbrace{A^\ast  \times_{M } \cdots \times_{M} A^\ast}},
\]
and the structure maps are induced, from those of the tangent and the cotangent groupoids, in the obvious, component-wise, way. Additionally, diagram
\[
\xymatrix{\mathfrak T^{l,m} \mathcal G \ar[d] \ar@<+2pt>[r] \ar@<-2pt>[r]& (\mathfrak T^{l,m} \mathcal G)_0 \ar[d] \\
\mathcal G \ar@<+2pt>[r] \ar@<-2pt>[r] & M
}
\]
is a \emph{fibered groupoid} (beware, \emph{not} a VB-groupoid), meaning that the fiber bundle projection $\mathfrak T^{l,m} \mathcal G \to \mathcal G$ is a groupoid map. We say that $\mathcal T$ is \emph{multiplicative} if $f_{\mathcal T}$ is a multiplicative function, i.e.
\[
f_{\mathcal T} (\Theta_1 \cdot \Theta_2) = f_{\mathcal T} (\Theta_1) + f_{\mathcal T} (\Theta_2),
\]
for all composable arrows $\Theta_1, \Theta_2 \in \mathfrak T^{l,m} \mathcal G$. When $\mathcal T$ is a differential form, a multivector or a $(1,1)$-tensor, one recovers earlier definitions. 

\begin{remark}\label{rem:mult_tens}
One can equivalently regard $f_{\mathcal T}$ as a fiber bundle map $\mathfrak T^{l,m} \mathcal G \to \mathbb R_{\mathcal G}$, also denoted by $f_{\mathcal T}$. It is immediate to check that $\mathcal T$ is multiplicative iff $f_{\mathcal T} : \mathfrak T^{l,m} \mathcal G \to \mathbb R_{\mathcal G}$ is a (necessarily fibered) groupoid map. 
We will always take this point of view without further comments.

Alternatively, one can see an $(l,m)$-tensor $\mathcal T$ on $\mathcal G$ as a map $\mathcal T': \mathfrak T^{l-1, m} \mathcal G \to T^\ast \mathcal G$ (resp., a map $\mathcal T'': \mathfrak T^{l, m-1} \mathcal G \to T\mathcal G$) in $l$ (resp.,~$m$) different ways. For instance, a $(1,1)$-tensor $\mathcal N$ can be seen as a map $\mathcal N : T \mathcal G \to T \mathcal G$. Similarly, a $2$-form $\Omega$, can be seen as a map $\Omega_\flat : T\mathcal G \to T^\ast \mathcal G$. A straightforward computations, shows that $\mathcal T$ is multiplicative iff any of the $\mathcal T'$ (resp.~$\mathcal T''$) is a groupoid map. For instance a $(1,1)$-tensor $\mathcal N$ is multiplicative iff, seen as a map $\mathcal N : T\mathcal G \to T \mathcal G$, it is a VB-groupoid map, and a $2$-form $\Omega$ is multiplicative iff the associated map $\Omega_\flat : T\mathcal G \to T^\ast \mathcal G$ is a VB-groupoid map, and this is often useful in practice.
\end{remark}

We refer to \cite{K2016} for a recent review on multiplicative structures on Lie groupoids. For later use, we only recall an alternative, equivalent definition of \emph{multiplicative differential form} which is often useful in practice. A differential form $\omega$ on a groupoid $\mathcal G$ is multiplicative if
\begin{equation}\label{eq:mult_form}
m^\ast \omega = \operatorname{pr}_1^\ast \omega + \operatorname{pr}_2^\ast \omega,
\end{equation}
where $m, \operatorname{pr}_i : \mathcal G^{(2)} \to \mathcal G$ are the multiplication and the two projections respectively, $i = 1,2$, on the manifold $\mathcal G^{(2)}$ of composable arrows.

We now pass to Atiyah tensors. The natural setting for multiplicative Atiyah tensors is that of LB-groupoids (Remark \ref{rem:LB-groupoids}). So, let $(L_{\mathcal G}  \rightrightarrows 0_M; \mathcal G  \rightrightarrows M)$ be an LB-groupoid with core $L$, so that $L_{\mathcal G} \simeq t^\ast L$. In the following, we will always understand the latter isomorphism. Denote by $A$ the Lie algebroid of $\mathcal G$. 

\begin{proposition}\label{prop:D_J^1}
Both $(D L_{\mathcal G}  \rightrightarrows DL ; \mathcal G  \rightrightarrows M)$ and $(\mathfrak J^1 L_{\mathcal G}  \rightrightarrows A^\ast \otimes L; \mathcal G  \rightrightarrows M)$ are VB-groupoids in a natural way.
\end{proposition}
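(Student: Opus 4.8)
The plan is to exhibit the groupoid structures on $DL_{\mathcal G}$ and $\mathfrak J^1 L_{\mathcal G}$ concretely, using the functoriality of the gauge-algebroid construction together with the $\mathbb R^\times$-principal bundle picture developed earlier, and then verify the three VB-groupoid axioms (VB1)--(VB3). Since $L_{\mathcal G} \simeq t^\ast L$ arises from an action of $\mathcal G \rightrightarrows M$ on the line bundle $L \to M$, its dual $L_{\mathcal G}^\ast \simeq s^\ast L^\ast$ is an honest action VB-groupoid, and removing the zero section gives the principal $\mathbb R^\times$-bundle of Lie groupoids $\widetilde{\mathcal G} = s^\ast(L^\ast \smallsetminus 0) \rightrightarrows \widetilde M = L^\ast \smallsetminus 0$ over $\mathcal G \rightrightarrows M$, as in the subsection on homogenization of line bundle groupoids.

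First I would homogenize: by Theorem \ref{theor:homo} applied fibrewise, $T\widetilde{\mathcal G} \simeq p^\ast D L_{\mathcal G}$ and $T^\ast \widetilde{\mathcal G} \simeq p^\ast \mathfrak J^1 L_{\mathcal G}$ as vector bundles over $\widetilde{\mathcal G}$ (where $p : \widetilde{\mathcal G} \to \mathcal G$ is the projection), and similarly $T\widetilde M \simeq \bar p^\ast DL$, $T^\ast \widetilde M \simeq \bar p^\ast \mathfrak J^1 L$ over $\widetilde M$. Now $\widetilde{\mathcal G} \rightrightarrows \widetilde M$ is a Lie groupoid, so its tangent VB-groupoid $(T\widetilde{\mathcal G} \rightrightarrows T\widetilde M; \widetilde{\mathcal G} \rightrightarrows \widetilde M)$ and cotangent VB-groupoid $(T^\ast \widetilde{\mathcal G} \rightrightarrows A(\widetilde{\mathcal G})^\ast; \widetilde{\mathcal G} \rightrightarrows \widetilde M)$ are available off the shelf. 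The key observation is that the $\mathbb R^\times$-action on $\widetilde{\mathcal G}$ is by groupoid automorphisms, hence its lift to $T\widetilde{\mathcal G}$ (resp.\ $T^\ast\widetilde{\mathcal G}$) is also by groupoid automorphisms; combining this with the relevant Euler vector field (the linear vector field associated to $\mathcal L_Z + (m-1)\mathbbm 1$ from the remark following Theorem \ref{theor:homo}, which is multiplicative on $\widetilde{\mathcal G}$ by Remark \ref{rem:Euler_mult}) one checks that the subbundle of homogeneous tensors is a Lie subgroupoid of the tangent (resp.\ cotangent) groupoid, and this subgroupoid descends along the $\mathbb R^\times$-quotient to a Lie groupoid structure on $DL_{\mathcal G}$ (resp.\ $\mathfrak J^1 L_{\mathcal G}$). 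I would then identify the base: $A(\widetilde{\mathcal G})$, being the restriction of $T\widetilde{\mathcal G}$ to units, inherits the analogous homogenization, so the space of objects of $DL_{\mathcal G} \rightrightarrows (?)$ is $DL$, while for $\mathfrak J^1 L_{\mathcal G}$ one gets $A(\widetilde{\mathcal G})^\ast / \mathbb R^\times$, which unwinds to $A^\ast \otimes L$ because $A(\widetilde{\mathcal G}) \simeq \bar p^\ast A \oplus (\text{Euler direction})$ and dualizing-then-quotienting twists $A^\ast$ by $L$. Axiom (VB2) holds since $DL_{\mathcal G} \to \mathcal G$ and $\mathfrak J^1 L_{\mathcal G} \to \mathcal G$ are vector bundles by construction, and (VB3) — that the vector bundle structure maps are groupoid morphisms — follows because the corresponding maps upstairs on $T\widetilde{\mathcal G}$, $T^\ast\widetilde{\mathcal G}$ are groupoid morphisms and are $\mathbb R^\times$-equivariant, hence descend to groupoid morphisms.

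Alternatively, and perhaps more cleanly for the write-up, one can give the structure maps directly via the functoriality remarked after the definition of the gauge algebroid: the source, target, multiplication, unit and inversion of the LB-groupoid $L_{\mathcal G} \rightrightarrows 0_M$ are all (regular, on the relevant pieces) vector bundle maps, so applying $D(-)$ and $\mathfrak J^1(-)$ to them — using $D\phi_E(\Delta)\varepsilon = \phi_E\Delta(\phi_L^\ast \varepsilon)$ and the dual construction on jets — produces candidate structure maps for $DL_{\mathcal G} \rightrightarrows DL$ and $\mathfrak J^1 L_{\mathcal G} \rightrightarrows A^\ast \otimes L$; the groupoid identities for these follow formally from functoriality (preservation of identities and compositions) applied to the groupoid identities downstairs, and linearity over $\mathcal G$ is manifest. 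I would spell out, in particular, that $D(t^\ast L) \simeq t^\ast(DL)$ only after one is careful that $t$ is a submersion, so the relevant vector bundle maps are regular; this is exactly the dichotomy recorded in Remark \ref{rem:LB-groupoids}.

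The main obstacle I anticipate is bookkeeping around the twist by $L$ in the jet side: showing the space of objects of $\mathfrak J^1 L_{\mathcal G} \rightrightarrows (?)$ is genuinely $A^\ast \otimes L$ (and not merely $A^\ast$) requires tracking how the duality pairing $\mathfrak J^1 L \otimes DL \to L$ interacts with the core/side-bundle exchange under dualization of VB-groupoids (the core of the cotangent groupoid $T^\ast\widetilde{\mathcal G} \rightrightarrows A(\widetilde{\mathcal G})^\ast$ is $T^\ast\widetilde M$, and one must see that the homogeneous part over the unit section picks out precisely $A^\ast \otimes L$ inside $\mathfrak J^1 L_{\mathcal G}|_M$). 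A secondary subtlety is checking that the two $\mathbb R^\times$-actions on tensor bundles mentioned in the remark after Theorem \ref{theor:homo} don't get conflated — one must consistently use the action whose Euler field is $\mathcal L_Z + (m-1)\mathbbm 1$ so that the quotient reproduces the Atiyah tensor bundle with its $L$-twist rather than the un-twisted lift. Everything else is routine, and I would leave those verifications to the reader in the style of the surrounding text.
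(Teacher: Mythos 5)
Your second, ``alternative'' route is essentially the paper's proof: for $DL_{\mathcal G}$ the paper (following \cite[Proposition 4.10]{ETV2016}) builds the structure maps by applying the functor $D(-)$ to the (regular) line bundle maps covering the structure maps of $\mathcal G$, e.g.\ $t_D = Dt$, $s_D = D(s_L\circ\tau^{-1})$, $1_D = D1_L$, with the multiplication obtained from the identification $(DL_{\mathcal G})^{(2)}\simeq D(m^\ast L_{\mathcal G})$; for $\mathfrak J^1 L_{\mathcal G}$ it then invokes exactly the duality/tensor argument you gesture at (the tensor product of a VB-groupoid with one having trivial side bundle is again a VB-groupoid, so $\mathfrak J^1 L_{\mathcal G} = (DL_{\mathcal G})^\ast\otimes L_{\mathcal G}$ inherits a groupoid structure), and writes out the resulting source, target, multiplication, unit and inversion explicitly. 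Your first route --- descending the tangent and cotangent VB-groupoids of $\widetilde{\mathcal G}\rightrightarrows\widetilde M$ along the $\mathbb R^\times$-actions with Euler fields $\mathcal L_Z$ and $\mathcal L_Z - \mathbb 1$ respectively --- is genuinely different and also workable; it has the advantage of making Proposition \ref{prop:mult_Atiyah} (where $p_T$ and $p_{T^\ast}$ are asserted to be VB-groupoid maps) automatic, at the cost of having to verify that the twisted cotangent action is by automorphisms of the cotangent groupoid and that the quotients are smooth. The paper's direct construction buys explicit formulas for the jet-groupoid structure maps, which are used later.

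One concrete error to fix in your descent argument: the Lie algebroid of the action groupoid $\widetilde{\mathcal G} = s^\ast\widetilde M\rightrightarrows\widetilde M$ is the action algebroid, i.e.\ the pull-back of $A$ to $\widetilde M$, of the \emph{same} rank as $A$; there is no extra ``Euler direction'' summand (the Euler vector field of $\widetilde{\mathcal G}$ is multiplicative but is not tangent to the source fibres, since the source is the projection onto the $\widetilde M$-factor). Consequently the identification of the side bundle of $\mathfrak J^1 L_{\mathcal G}$ with $A^\ast\otimes L$ does not come from ``dualizing an Euler summand''; it comes from the fact that the $\mathbb R^\times$-action on $T^\ast\widetilde{\mathcal G}$ whose quotient is $\mathfrak J^1 L_{\mathcal G}$ carries the extra fibrewise weight (Euler field $\mathcal L_Z - \mathbb 1$ rather than $\mathcal L_Z$), so the quotient of the pulled-back $A^\ast$ acquires the tensor factor $L = \widetilde M\times_{\mathbb R^\times}\mathbb R$. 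With that correction the identification of the space of objects goes through.
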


\begin{proof}[Proof. (A sketch)]
We only sketch the proof. The straightforward details are left to the reader. For $(D L_{\mathcal G}  \rightrightarrows DL ; \mathcal G  \rightrightarrows M)$, the statement is contained in \cite[Proposition 4.10]{ETV2016}. Notice however, that the authors of \cite{ETV2016} use $s^\ast L$ instead of $L_{\mathcal G} = t^\ast L$. Actually, this choice is purely conventional as there is a canonical vector bundle isomorphism $\tau : s^\ast L \to t^\ast L$, given by $(g, \ell) \mapsto (g, g.\ell)$. Here, we limit ourselves to describe the structure maps of $D L_{\mathcal G} \rightrightarrows DL$, using our conventions. The source $s_D : DL_{\mathcal G} \to DL$ is $s_D = D (s_L \circ \tau^{-1})$. Here we denoted by $s_L : s^\ast L \to L$ the regular line bundle map induced by the source $s : \mathcal G \to M$. The target $t_D : DL_{\mathcal G} \to DL$ is $t_D = Dt$. The manifold $(DL_{\mathcal G})^{(2)}$ of composable arrows in $DL_{\mathcal G}$ is then a vector bundle over $\mathcal G^{(2)}$ canonically isomorphic to $D (m^\ast L_G)$, where $m : \mathcal G^{(2)} \to \mathcal G$ is the multiplication in $\mathcal G$. The isomorphism 
\[
D(m^\ast L_{\mathcal G}) \to (DL_{\mathcal G})^{(2)}
\]
maps $\square$ to the pair $(D \operatorname{pr}_1 (\square), D \operatorname{pr}_2 (\square_\tau))$, where $\operatorname{pr}_i : \mathcal G^{(2)} \to \mathcal G$ are the projections, and $\square_\tau \in D m^\ast s^\ast L$ is obtained from $\square$ identifying $L_{\mathcal G}$ with $s^\ast L$ via $\tau$. Here we are also identifying $m^\ast L_{\mathcal G}$ with $\operatorname{pr}_1^\ast L_{\mathcal G}$ using that $t \circ m = t \circ \operatorname{pr}_1$, and we are identifying $m^\ast s^\ast L$ with $\operatorname{pr}_2^\ast s^\ast L$ using that $t \circ m = s \circ \operatorname{pr}_2$. The unit is $1_D = D 1_L$, where we denoted by $1_L : L \to L_{\mathcal G}$ the regular line bundle map given by $\ell \mapsto (1, \ell)$. It follows that the inversion is $i_D = D(i_L \circ \tau^{-1})$, where $i_L : s^\ast L \to L_{\mathcal G}$ is the regular vector bundle map given by $(g, \ell) \mapsto (g^{-1}, \ell)$. The core of $(DL_{\mathcal G}  \rightrightarrows DL; \mathcal G  \rightrightarrows M)$ is (canonically isomorphic to) $A$. If $\alpha$ is a section of $A$, and $x \in M$, then the embedding $A \hookrightarrow DL_{\mathcal G}$ identifies point $\alpha_x$ of $A$, with the value at $1_x$ of derivation $\mathbb D_\alpha : L_{\mathcal G} \to L_{\mathcal G}$ given by the composition 
\[
\mathbb D_\alpha = \tau \circ \mathcal L_{\overrightarrow \alpha} \circ \tau^{-1}, 
\]
where $\mathcal L_{\overrightarrow \alpha}$ is the Lie derivative of a section of $s^\ast L$ along the right invariant vector field $\overrightarrow{\alpha}$ corresponding to $\alpha$. As $\overrightarrow{\alpha}$ is tangent to the source fibers, this is well-defined.

The groupoid structure on $\mathfrak J^1 L_{\mathcal G}$ can be obtained by \emph{duality}. Namely, the tensor product of two VB-groupoids is not a VB-groupoid in general. However the tensor product of a generic VB-groupoid and a VB-groupoid with trivial side bundle \emph{is} a VB-groupoid. It follows that $\mathfrak J^1 L_{\mathcal G} = \operatorname{Hom} (D L_{\mathcal G}, L_{\mathcal G}) = (DL_{\mathcal G})^\ast \otimes L_{\mathcal G}$ is a VB-groupoid. Its side bundle is $A^\ast \otimes L = \operatorname{Hom} (A, L)$ and the structure maps are the following. Source and target $s_{\mathfrak J^1}, t_{\mathfrak J^1} : \mathfrak J^1 L_{\mathcal G} \to A^\ast \otimes L$ are given by
\[
\langle s_{\mathfrak J^1} (\psi), a \rangle = - \langle \psi, m_D (0, i_D(\mathbb D_a)) \rangle, \quad
\langle t_{\mathfrak J^1} (\psi), b \rangle = \langle \psi, m_D(\mathbb D_b ,0) \rangle 
\]
for all $\psi \in \mathfrak J^1 L_{\mathcal G}$ and $a, b \in A$, where we identified the fiber of $L_{\mathcal G}$ over $g$ with $L_{t(g)}$. The multiplication $m_{\mathfrak J^1}$ is uniquely determined by
\[
\langle m_{\mathfrak J^1} (\psi_1, \psi_2), m_D (\Delta_1, \Delta_2) \rangle = \langle \psi_1, \Delta_1 \rangle \cdot \langle \psi_2, \Delta_2 \rangle = \langle \psi_1, \Delta_1 \rangle + g_1. \langle \psi_2, \Delta_2 \rangle ,
\]
for all composable pairs $(\psi_1, \psi_2)$ in $\mathfrak J^1 L_{\mathcal G}$, and all composable pairs $(\Delta_1, \Delta_2)$ in $DL_{\mathcal G}$ over $(g_1, g_2) \in \mathcal G^{(2)}$, where a dot ``$\cdot$'' denotes the multiplication in $L_{\mathcal G}$.
The unit $1_{\mathfrak J^1} : A^\ast \otimes L \to \mathfrak J^1 L_{\mathcal G}$ is given by
\[
\langle 1_{\mathfrak J^1} (\phi), \Delta \rangle = \langle \phi, \Delta - (1_D \circ s_D) (\Delta) \rangle,
\]
for all $\phi \in A^\ast \otimes L$, and $\Delta \in DL_{\mathcal G}$ (as $\Delta - (1_D \circ s_D) (\Delta) $ is in the core of $DL_{\mathcal G}$, this is well-defined). It follows that the inversion
$i_{\mathfrak J^1} : \mathfrak J^1 L_{\mathcal G} \to \mathfrak J^1 L_{\mathcal G}$ is given by
\[
\langle i_{\mathfrak J^1} \psi, \Delta \rangle = \langle \psi, i_D (\Delta) \rangle^{-1} =  - g^{-1}.  \langle \psi, i_D (\Delta) \rangle
\]
for all $\psi \in \mathfrak J^1 L_{\mathcal G}$ and all $\Delta \in DL_{\mathcal G}$ over $g \in \mathcal G$, where a superscript ${}^{-1}$ in the middle term denotes the inversion in $L_{\mathcal G}$.
\end{proof}

\begin{remark}
Notice that nor derivations, nor $1$-jets of a generic VB-groupoid form a groupoid. However derivations and 1-jets of a VB-groupoid with trivial side bundle, in particular of an LB-groupoid, do. \end{remark}

It follows from Proposition \ref{prop:D_J^1} that the fibered product
\[
\mathfrak T^{l,m}_L  := \underset{\text{$l$ times}}{\underbrace{DL_{\mathcal G}  \times_{\mathcal G } \cdots \times_{\mathcal G }  DL_{\mathcal G}   }} \times_{\mathcal G } \underset{\text{$m$ times}}{\underbrace{\mathfrak J^1 L_{\mathcal G}  \times_{\mathcal G } \cdots \times_{\mathcal G } \mathfrak J^1 L_{\mathcal G} }},
\]
 is a Lie groupoid as well. The manifold of objects is the fibered product
\[
(\mathfrak T^{l,m}_L)_0 :=  \underset{\text{$l$ times}}{\underbrace{DL  \times_{M } \cdots \times_{M} DL }} \times_{M } \underset{\text{$m$ times}}{\underbrace{(A^\ast \otimes L)  \times_{M } \cdots \times_{M} (A^\ast \otimes L)}},
\]
and the structure maps are induced component-wise from those of $DL_{\mathcal G} \rightrightarrows DL$ and $\mathfrak J^1 L_{\mathcal G} \rightrightarrows A^\ast \otimes L$. Additionally, diagram
\[
\xymatrix{\mathfrak T^{l,m}_L \ar[d] \ar@<+2pt>[r] \ar@<-2pt>[r]& (\mathfrak T^{l,m}_L)_0 \ar[d] \\
\mathcal G \ar@<+2pt>[r] \ar@<-2pt>[r] & M
}
\]
is a fibered groupoid (however, not a VB-groupoid). We are now ready to define \emph{multiplicative Atiyah tensors}. First of all, notice that an Atiyah $(l,m)$-tensor $\mathcal T$ on $L_{\mathcal G}$ can be regarded as a map
\[
f_{\mathcal T} : \mathfrak T^{l,m}_L \to L_{\mathcal G}
\]

\begin{definition}
An Atiyah $(l,m)$ tensor $\mathcal T$ on $L_{\mathcal G}$ is \emph{multiplicative} if $f_{\mathcal T}$ is a (fibered) groupoid map.
\end{definition}

\begin{proposition}\label{prop:mult_Atiyah}
An Atiyah $(l,m)$ tensor $\mathcal T$ on $L_{\mathcal G}$ is multiplicative iff its homogenization $\widetilde{\mathcal T}$ is a multiplicative tensor on the groupoid $\widetilde{\mathcal G}$.
\end{proposition}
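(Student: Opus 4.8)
The plan is to compare the two ``multiplicativity functions'' directly, using the homogenization identifications from Theorem~\ref{theor:homo} and the VB-groupoid structures from Proposition~\ref{prop:D_J^1}. First I would set up the homogenization of the groupoid $\mathfrak T^{l,m}_L$ itself. The homogenization $\widetilde{\mathcal G} = s^\ast\widetilde M \rightrightarrows \widetilde M$ carries the tangent groupoid $T\widetilde{\mathcal G} \rightrightarrows T\widetilde M$ and cotangent groupoid $T^\ast \widetilde{\mathcal G} \rightrightarrows A_{\widetilde{\mathcal G}}^\ast$; by the second part of Theorem~\ref{theor:homo} (the pull-back diagrams $T\widetilde M \simeq p^\ast DL$, $T^\ast \widetilde M \simeq p^\ast \mathfrak J^1 L$, extended over $\widetilde{\mathcal G}$), one gets canonical identifications $T\widetilde{\mathcal G} \simeq \widetilde p_{\mathcal G}^\ast (DL_{\mathcal G})$ and $T^\ast \widetilde{\mathcal G} \simeq \widetilde p_{\mathcal G}^\ast(\mathfrak J^1 L_{\mathcal G})$ that are compatible with the $\mathbb R^\times$-actions, where $\widetilde p_{\mathcal G} : \widetilde{\mathcal G} \to \mathcal G$ is the projection. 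The point I would establish carefully here is that these identifications are \emph{groupoid} isomorphisms: the structure maps of $T\widetilde{\mathcal G} \rightrightarrows T\widetilde M$ are the tangent maps of the structure maps of $\widetilde{\mathcal G}$, which are themselves the homogenizations of the structure maps of $\mathcal G$; and the homogenization construction $\phi_L \mapsto \widetilde\phi$, being functorial and compatible with the derivation functor $D$ and with duals, intertwines $D(\text{structure maps})$ with $T(\widetilde{\text{structure maps}})$. This is exactly the content of Remark after Proposition~\ref{prop:homo} combined with the functoriality remark after Definition of $DE$. Consequently $\mathfrak T^{l,m}\widetilde{\mathcal G}$ is, as a groupoid, identified with $\widetilde p_{\mathcal G}^\ast(\mathfrak T^{l,m}_L)$, the pull-back of the fibered groupoid $\mathfrak T^{l,m}_L$ along $\widetilde p_{\mathcal G}$.

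Next I would match the two multiplicativity functions. By definition, $\mathcal T$ multiplicative means $f_{\mathcal T} : \mathfrak T^{l,m}_L \to L_{\mathcal G}$ is a groupoid map, and $\widetilde{\mathcal T}$ multiplicative means $f_{\widetilde{\mathcal T}} : \mathfrak T^{l,m}\widetilde{\mathcal G} \to \mathbb R_{\widetilde{\mathcal G}}$ is a groupoid map. I claim the defining equation~\eqref{eq:T_tilde} of $\widetilde{\mathcal T}$ says precisely that, under the identification $\mathfrak T^{l,m}\widetilde{\mathcal G} \simeq \widetilde p_{\mathcal G}^\ast \mathfrak T^{l,m}_L$ and the homogenization map $p_L^{\mathcal G} : \mathbb R_{\widetilde{\mathcal G}} \to L_{\mathcal G}$ covering $\widetilde p_{\mathcal G}$, one has
\[
f_{\widetilde{\mathcal T}} = (p_L^{\mathcal G})^\ast \bigl(f_{\mathcal T}\bigr),
\]
i.e.\ $f_{\widetilde{\mathcal T}}$ is the pull-back (in the line-bundle-map sense of Section~\ref{Sec:hom_trick}) of $f_{\mathcal T}$. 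Indeed, evaluating $f_{\widetilde{\mathcal T}}$ on a point $\bigl(\widetilde\Delta_1,\dots,\widetilde\psi_m\bigr)$ of $\mathfrak T^{l,m}\widetilde{\mathcal G}$ over $\epsilon\in\widetilde{\mathcal G}$ gives, by~\eqref{eq:T_tilde} and the pointwise formula defining $p_{l,m}$ in the remark after Theorem~\ref{theor:homo}, exactly $\bigl\langle \epsilon, f_{\mathcal T}(\Delta_1,\dots,\psi_m)\bigr\rangle$, which is the definition of $(p_L^{\mathcal G})^\ast f_{\mathcal T}$ evaluated there. (Both $f_{\mathcal T}$ and $f_{\widetilde{\mathcal T}}$ are the appropriate ``tautological evaluation'' maps, so this is really a bookkeeping identity.)

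Finally I would invoke a general principle: if $q : P \to X$ is a principal $\mathbb R^\times$-bundle in the category of Lie groupoids, $q_L : \mathbb R_P \to L$ the associated line bundle map covering $q$ (so $q_L$ is a VB-groupoid map of LB-groupoids), and $f : L \to \mathbb R_X$... wait --- more precisely: a line-bundle map $\Phi : L_{\mathcal G} \to \mathbb R_{\mathcal G}$-type statement. The clean formulation: since $\widetilde p_{\mathcal G} : \widetilde{\mathcal G} \to \mathcal G$ is a surjective submersion and a groupoid map, and since $p_L^{\mathcal G}$ is a fiberwise-invertible VB-groupoid map whose pull-back of sections is bijective (Proposition~\ref{prop:L^hat}-style argument, here the real $\mathbb R^\times$ version: $f_{\mathcal T}$ is recovered from $f_{\widetilde{\mathcal T}}$ by the homogeneity condition $Z(f_{\widetilde{\mathcal T}}) = f_{\widetilde{\mathcal T}}$, $h_{-1}^\ast f_{\widetilde{\mathcal T}} = -f_{\widetilde{\mathcal T}}$), the map $f_{\mathcal T}$ is a (fibered) groupoid map if and only if its pull-back $f_{\widetilde{\mathcal T}} = (p_L^{\mathcal G})^\ast f_{\mathcal T}$ is a (fibered) groupoid map. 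The ``only if'' direction is immediate: a composition of groupoid maps is a groupoid map, and $p_L^{\mathcal G}$ together with the identifications of Step~1 are groupoid maps. The ``if'' direction uses that $\widetilde p_{\mathcal G}$ is a surjective groupoid submersion, so the multiplicativity identity for $f_{\widetilde{\mathcal T}}$ descends: given composable $\Theta_1,\Theta_2 \in \mathfrak T^{l,m}_L$ lying over composable $g_1,g_2\in\mathcal G$, lift $g_1,g_2$ to composable $\epsilon_1,\epsilon_2\in\widetilde{\mathcal G}$ and lift $\Theta_i$ to $\widetilde\Theta_i$ over $\epsilon_i$; applying the multiplicativity of $f_{\widetilde{\mathcal T}}$ and pairing back via $\epsilon_1\epsilon_2$ (using that $p_L^{\mathcal G}$ intertwines groupoid multiplications, which is how the $\mathbb R^\times$-action and the core-line-bundle multiplication are built) yields the multiplicativity of $f_{\mathcal T}$ at $(\Theta_1,\Theta_2)$.

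\medskip

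I expect the main obstacle to be Step~1: verifying honestly that the pull-back identifications $T\widetilde{\mathcal G}\simeq\widetilde p_{\mathcal G}^\ast DL_{\mathcal G}$ and $T^\ast\widetilde{\mathcal G}\simeq\widetilde p_{\mathcal G}^\ast\mathfrak J^1 L_{\mathcal G}$ are isomorphisms of \emph{groupoids}, not merely of vector bundles. The vector-bundle statements are Theorem~\ref{theor:homo} applied on $\mathcal G$ in place of $M$, but one must chase through the descriptions of the structure maps of $DL_{\mathcal G}\rightrightarrows DL$ and $\mathfrak J^1 L_{\mathcal G}\rightrightarrows A^\ast\otimes L$ given in Proposition~\ref{prop:D_J^1} and check they correspond, under homogenization, to the tangent/cotangent groupoid structure maps — in particular that $s_D$, $t_D$, $m_D$, $1_D$, $i_D$ homogenize to $Ts_{\widetilde{\mathcal G}}$, $Tt_{\widetilde{\mathcal G}}$, etc., and dually for jets. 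The duality in Proposition~\ref{prop:D_J^1} (the cotangent/jet groupoid is built by dualizing the derivation groupoid) should make the jet case automatic once the derivation case is done, since homogenization is compatible with the duality pairing $\mathfrak J^1 L\otimes DL\to L$. Once Step~1 is in place, Steps~2 and~3 are essentially formal, and indeed the whole proof is ``straightforward'' in the sense the authors use throughout Section~\ref{Sec:hom_trick}; I would write it as a short argument rather than a long computation.
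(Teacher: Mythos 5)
Your proposal is correct and follows essentially the same route as the paper's proof: the paper likewise first records that $p_T : T\widetilde{\mathcal G} \to DL_{\mathcal G}$ and $p_{T^\ast} : T^\ast\widetilde{\mathcal G} \to \mathfrak J^1 L_{\mathcal G}$ are VB-groupoid maps, then uses the tautological pairing relation $f_{\widetilde{\mathcal T}}(v_1,\ldots,\theta_m) = \langle \epsilon, f_{\mathcal T}(p_T(v_1),\ldots,p_{T^\ast}(\theta_m))\rangle$ together with surjectivity of $p_T$ and $p_{T^\ast}$ to transfer the multiplicativity identity in both directions. The only difference is presentational: you package the comparison as a pull-back of fibered groupoids and a general descent principle, while the paper writes out the two chains of equalities explicitly.
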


\begin{proof}
First of all, straightforward computations show that diagrams
\[
\begin{array}{c}
\xymatrix{
T \widetilde{\mathcal G} \ar[r]^-{p_T} \ar[d] & DL_{\mathcal G} \ar[d] \\
\widetilde{\mathcal G} \ar[r]^-p & \mathcal G
}
\end{array}, \quad \text{and} \quad
\begin{array}{c}
\xymatrix{
T^\ast \widetilde{\mathcal G} \ar[r]^-{p_{T^\ast}} \ar[d] & \mathfrak J^1L_{\mathcal G} \ar[d] \\
\widetilde{\mathcal G} \ar[r]^-p & \mathcal G
}
\end{array}
\]
are VB-groupoid maps, i.e.~they are simultaneously vector bundle maps and Lie groupoid maps. Now let $\mathcal T$ and $\widetilde{\mathcal T}$ be as in the statement and consider the corresponding smooth maps
\[
f_{\mathcal T} : \mathfrak T^{l,m}_L \to L_{\mathcal G}, \quad \text{and} \quad f_{\widetilde{\mathcal T}} : \mathfrak T^{l,m}\widetilde{\mathcal G} \to \mathbb{R}_{\widetilde{\mathcal G}}.
\]
Regardless whether or not $\mathcal T$ or $\widetilde{\mathcal T}$ is multiplicative, both $f_{\mathcal T}$ and $f_{\widetilde{\mathcal T}}$ preserve automatically composability. Hence, it is enough to check that $f_{\mathcal T}$ preserves multiplication iff so does $f_{\widetilde{\mathcal T}}$. So, let $\epsilon, \zeta \in \widetilde{\mathcal G}$ be composable arrows, and take composable
\[
(v_1, \ldots, v_l; \eta_1, \ldots, \eta_m), (w_1, \ldots, w_l; \theta_1, \ldots, \theta_m) \in \mathfrak T^{l,m} \widetilde{\mathcal G}
\]
over them, meaning that $v_i,w_i$ and $\eta_j, \theta_j$ are composable for each $i$ and $j$. Additionally, put $\Delta_i = p_T(v_i)$, $\square_i = p_T (w_i)$, $\psi_j = p_{T^\ast} (\eta_j)$, and $\psi_j = p_{T^\ast} (\theta_j)$, so that 
\[
\begin{aligned}
& f_{\widetilde{\mathcal T}} ((v_1, \ldots, v_l; \eta_1, \ldots, \eta_m) \cdot (w_1, \ldots, w_l; \theta_1, \ldots, \theta_m))  \\ 
& = f_{\widetilde{\mathcal T}} (v_1 \cdot w_1, \ldots, v_l \cdot w_l; \eta_1 \cdot \theta_1, \ldots, \eta_m \cdot \theta_m) \\
& = \langle \epsilon \cdot \zeta, f_{\mathcal T} (p_T(v_1 \cdot w_1), \ldots, p_T(v_l \cdot w_l); p_{T^\ast}(\eta_1 \cdot \theta_1), \ldots, p_{T^\ast}(\eta_m \cdot \theta_m)) \rangle
\\
& = \langle \epsilon \cdot \zeta, f_{\mathcal T} (\Delta_1 \cdot \square_1, \ldots, \Delta_l \cdot \square_l; \chi_1 \cdot \psi_1, \ldots, \chi_m \cdot \psi_m) \rangle
\end{aligned} 
\]
Now $f_{\widetilde{\mathcal T}}$ preserves multiplication iff
\[
\begin{aligned}
&  f_{\widetilde{\mathcal T}} ((v_1, \ldots, v_l; \eta_1, \ldots, \eta_m) \cdot (w_1, \ldots, w_l; \theta_1, \ldots, \theta_m)) \\
& = f_{\widetilde{\mathcal T}} (v_1, \ldots, v_l; \eta_1, \ldots, \eta_m) + f_{\widetilde{\mathcal T}} (w_1, \ldots, w_l; \theta_1, \ldots, \theta_m)) \\
& = \langle \epsilon, f_{\mathcal T} (\Delta_1, \ldots, \Delta_l; \chi_1, \ldots, \chi_m) \rangle 
 + \langle \zeta,  f_{\mathcal T} (\square_1, \ldots, \square_l; \psi_1, \ldots, \psi_m)) \rangle\\
& = \langle \epsilon \cdot \zeta,  f_{\mathcal T} (\Delta_1, \ldots, \Delta_l; \chi_1, \ldots, \chi_m) + p(\epsilon).  f_{\mathcal T} (\square_1, \ldots, \square_l; \psi_1, \ldots, \psi_m)) \rangle,
\end{aligned}
\]
and the claim follows from surjectivity of $p_T, p_{T^\ast}$.
\end{proof}

\begin{example}[Multiplicative contact structures as multiplicative symplectic Atiyah forms]\label{ex:mult_contact}
A distribution $\mathcal D \subset T \mathcal G$ on a Lie groupoid $\mathcal G \rightrightarrows M$ is \emph{multiplicative} if it is a VB-subgroupoid of the tangent VB-groupoid $(T \mathcal G  \rightrightarrows TM; \mathcal G \rightrightarrows M)$. In particular, there is a distribution $\mathcal D_0 \subset TM$ on $M$ so that $(\mathcal D \rightrightarrows \mathcal D_0; \mathcal G \rightrightarrows M)$ is a VB-groupoid, and we say that $\mathcal D$ \emph{covers} $\mathcal D_0$. Denote by $\nu (\mathcal D) = T\mathcal G / \mathcal D$ and $\nu (\mathcal D_0) = TM/ \mathcal D_0$ the normal bundles to $\mathcal D$ and $\mathcal D_0$ respectively. Then $(\nu (\mathcal D)  \rightrightarrows \nu (\mathcal D_0); \mathcal G  \rightrightarrows M)$, with the obvious structure maps, is a VB-groupoid as well, and projection $T\mathcal G \to \nu (\mathcal D)$ is a VB-groupoid map.

Now, a \emph{contact groupoid} is a Lie groupoid $\mathcal G \rightrightarrows M$ equipped with a \emph{multiplicative contact structure}, i.e.~a multiplicative contact distribution $H \subset T \mathcal G$ with the additional property that $H$ covers $TM$. In other words, the normal bundle $\nu (H)$ sits in an LB-groupoid $(\nu (H)  \rightrightarrows 0_M; \mathcal G  \rightrightarrows M)$. In the following, we denote by $L_{\mathcal G}$ the normal bundle $\nu (H)$ of a multiplicative contact structure $H$, and by $L$ its core, so that, in particular, $\mathcal G$ acts on $L$, and $L_{\mathcal G} = t^\ast L$ as VB-groupoids over $\mathcal G$. 

Now, let $(L_{\mathcal G}  \rightrightarrows 0_M; \mathcal G  \rightrightarrows M)$ be an LB-groupoid, and let $H$ be a contact structure such that $\nu (H) = L_{\mathcal G}$. Consider the symplectic Atiyah $2$-form $\omega \in \Omega^2_{L_{\mathcal G}}$ corresponding to $H$. Then $H$ is a multiplicative contact structure iff $\omega$ is a multiplicative Atiyah $2$-form. To see this we argue as follows.

First of all, Crainic and Salazar \cite{CS2015} (see also \cite{CSS2015}) shows that $H$ is a multiplicative contact structure iff its structure form $\theta \in \Omega^1 (M, t^\ast L)$ is \emph{multiplicative}, in the sense that
\[
(m^\ast \theta)_{(g, h)} = \operatorname{pr}_1^\ast \theta_{g} + g . \operatorname{pr}_2^\ast \theta_{h}.
\]
for all $(g, h) \in \mathcal G^{(2)}$. When $L = \mathbb R_M$ is the trivial line bundle, equipped with the trivial representation, one recovers definition (\ref{eq:mult_form}) of a multiplicative $1$-form. It is also easy to see that $\theta : T\mathcal G \to t^\ast L$ is multiplicative iff it is a VB-groupoid map. Now, put $\Theta := \theta \circ \sigma \in 
\Omega^1_{L_{\mathcal G}}$, and recall that $\omega = d_D \Theta$. Then $\omega$ is multiplicative
 iff $\Theta$ is so. To see this, we pass to the homogenizations $\widetilde \omega$ and
  $\widetilde \Theta$. Then $\widetilde \omega = d \widetilde \Theta$, and $\widetilde 
  \Theta = i_{\mathcal Z} \widetilde \omega$, where $\mathcal Z$ is the Euler vector field
   on $\widetilde{\mathcal G}$. From Remark \ref{rem:Euler_mult}, $\mathcal Z$ is a
    multiplicative vector field. As exterior differential and contraction with a multiplicative
     vector field preserve multiplicative forms, it immediately follows that $\widetilde 
     \omega$ is multiplicative iff $\widetilde \Theta$ is so. It remains to prove that $\theta$
      is multiplicative iff $\Theta$, or, equivalently, $\widetilde \Theta$, is so. To do this, identify $\widetilde{\mathcal G}$ with the pull-back bundle $t^\ast \widetilde M = t^\ast L^\ast \smallsetminus 0$, denote by $p : \widetilde{\mathcal G} \to \mathcal G$ the projection, and notice that, from the definition of $\widetilde \Theta$,
      \[
      \widetilde \Theta_{(g, \epsilon)} (v) = \langle \epsilon, \theta_{g} (p_{\ast} (v)) \rangle,
      \]
     for all $(g,\epsilon) \in \widetilde{\mathcal G}$, i.e.~$g \in \mathcal G$ and $\epsilon \in L^\ast_{t(g)} \smallsetminus 0$, and all $v \in T_{\epsilon} \widetilde{\mathcal G}$.
     Now, take $((g, \epsilon), (h, \eta)) \in \widetilde{\mathcal G}{}^{(2)}$. This means that $(g,h) \in \mathcal G^{(2)}$, and $\epsilon = h.\eta$ (and, in this case, $(g, \epsilon) \cdot
       (h, \eta) = (gh, \eta)$). For all $(v, w) \in T \widetilde{\mathcal G}{}^{(2)}$ we have
\[
\begin{aligned}
& (m^\ast \widetilde \Theta - \operatorname{pr}_1^\ast \widetilde \Theta - \operatorname{pr}_2^\ast \widetilde \Theta)_{((g, \epsilon), (h, \eta))} (v,w) \\
& = \widetilde \Theta_{(gh, \eta)} (v \cdot w) - \widetilde \Theta_{(g, \epsilon)}(v) - \widetilde \Theta_{(h, \eta)}(w) \\
& = \langle \eta, \theta_{gh} (p_\ast (v\cdot w)) - \theta_h (p_\ast (w)) \rangle - \langle \epsilon, \theta_g(p_\ast (v)) \rangle \\
& =  \langle \eta, \theta_{gh} (p_\ast (v) \cdot p_\ast (w)) - \theta_h (p_\ast (w)) \rangle - \langle h.\eta, \theta_g(p_\ast (v)) \rangle \\
& = \langle \eta, \theta_{gh} (p_\ast (v) \cdot p_\ast (w)) - \theta_h (p_\ast (w)) \rangle - \langle \eta, h.\theta_g(p_\ast (v)) \rangle \\
& =  \langle \eta, \theta_{gh} (p_\ast (v) \cdot p_\ast (w)) - \theta_h (p_\ast (w)) - h.\theta_g(p_\ast (v)) \rangle \\
& =  \langle \eta, ((m^\ast \theta)_{(g,h)}  -  \operatorname{pr}_1^\ast \theta_g  - h. \operatorname{pr}_2^\ast  \theta_h) (p_\ast(v), p_\ast(w)) \rangle.
\end{aligned}
\] 
As $p_\ast$ is surjective, and $\eta$ is arbitrary, we immediately see that $\Theta$ is multiplicative iff $\theta$ is so.

We conclude that a contact groupoid is basically the same as an LB-groupoid equipped with a multiplicative symplectic Atiyah $2$-form.

\end{example}


\section{Integration of Poisson structures}

\subsection{Integration of Jacobi manifolds}

Recall from Example \ref{ex:Jacobi_structures} that a \emph{Jacobi manifold} is a manifold $M$ equipped with a \emph{Jacobi structure}, i.e.~a pair $(L, \{-,-\})$ consisting of a line bundle $L \to M$ and a Lie bracket $\{-,-\} : \Gamma (L) \times \Gamma (L) \to \Gamma (L)$ which is a bi-derivation. Bracket $\{-,-\}$ is called the \emph{Jacobi bracket}, and $L \to M$, equipped with the Jacobi bracket, is called the \emph{Jacobi bundle}. A \emph{Jacobi map} between Jacobi manifolds $(M_1, L_1, \{-,-\}_1)$ and $(M_1, L_2, \{-,-\}_2)$ is a (regular) line bundle map $F : L_1 \to L_2$ such that $\{F^\ast \lambda, F^\ast \mu\}_1 = F^\ast \{ \lambda, \mu\}_2$ for all $\lambda, \lambda' \in \Gamma (L_2)$. Jacobi manifolds encompass contact, locally conformally simplectic, and Poisson manifolds as special instances. Notice that a Jacobi bracket on $L$ is the same as a skew-symmetric Atiyah $(0,2)$-tensor $J : \wedge^2 \mathfrak J^1 L \to L$ satisfying the integrability condition 
\[
[J,J]^{SJ} = 0,
\] 
where $[-,-]^{SJ}$ is the Schouten-Jacobi bracket, and we write $\{-,-\} \equiv J$. Every Jacobi manifold $(M, L, \{-,-\} \equiv J)$ determines a Lie algebroid structure, denoted $(\mathfrak J^1 L)_J$, and called the \emph{jet algebroid} of $(M, L, J)$, on the first jet bundle $\mathfrak J^1 L$. The Lie bracket $[-,-]_J$ on sections of $(\mathfrak J^1 L)_J$ is given by 
\[
[\psi, \chi]_J = \mathcal L_{J^\sharp \psi} \chi - \mathcal L_{J^\sharp \chi} \psi - d_D \langle J, \psi \wedge \chi \rangle
\]
for all $\psi, \chi \in \Gamma (\mathfrak J^1 L)$, where $J^\sharp : \mathfrak J^1 L \to DL$ is the vector bundle map induced by $J$ in the obvious way. The anchor $\rho_J : (\mathfrak J^1 L)_J \to TM$ is given by the composition $\sigma \circ J^\sharp$. Additionally, the jet algebroid $(\mathfrak J^1 L)_J$ acts on the Jacobi bundle $L$ in a canonical way. The flat connection defining the action is $J^\sharp$.

Crainic and Salazar \cite{CS2015} proved that \emph{Jacobi manifolds integrate to contact groupoids} in the following sense (see also \cite{KS1993} where only partial results were obtained, and \cite{CZ2007} where only the case when $L = \mathbb R_M$ is the trivial line bundle is considered; foundational results on this topic can be found in the older references \cite{L1993,D1997}). First of all a Jacobi manifold $(M, L, J)$ is said to be \emph{integrable} if the associated jet groupoid is integrable, i.e.~it is isomorphic to the Lie algebroid of a Lie groupoid. We then have the following 

\begin{theorem}\label{theor:int_Jacobi}
\quad
\begin{enumerate}
\item Let $(\mathcal G \rightrightarrows M, H)$ be a contact groupoid with $\nu (H) = L_{\mathcal G} $. Then $M$ is equipped with a unique Jacobi structure $(L, J)$, such that $L$ is the core of the LB-groupoid $(L_{\mathcal G} \rightrightarrows 0_M; \mathcal G \rightrightarrows M)$, and the natural bundle map $t_L : L_{\mathcal G} \simeq t^\ast L \to L$ is a Jacobi map (covering the target $t : \mathcal G \to M$). The Jacobi manifold $(M, L, J)$ is called the differentiation of $(\mathcal G, H)$. 
\item Conversely, let $(M, L, J)$ be an integrable Jacobi manifold and let $\mathcal G$ be the source-simply connected integration of the jet algebroid $(\mathfrak J^1 L)_J$. Then $\mathcal G$ is equipped with a unique multiplicative contact structure $H$ inducing precisely the Jacobi structure $(L, J)$ on $M$ by differentiation. The contact groupoid $(\mathcal G, H)$ is called the integration of $(M, L, J)$.
\end{enumerate}
Integration and differentiation establish a one-to-one correspondence between integrable Jacobi manifolds and source-simply connected contact groupoids. Shortly, Jacobi manifolds integrate to contact groupoids.
\end{theorem}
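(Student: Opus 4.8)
The plan is to deduce the statement from the classical integration theory of Poisson manifolds (to be recalled in the remainder of this section) by means of the homogenization scheme of Section \ref{Sec:hom_trick}. Given a Jacobi manifold $(M, L, J)$ I will write $(\widetilde M, \widetilde J, Z)$ for its Poissonization (Example \ref{ex:Jacobi_structures}), so $\widetilde J$ is a Poisson bivector on $\widetilde M = L^\ast \smallsetminus 0$ with $\mathcal L_Z \widetilde J = -\widetilde J$; and given an LB-groupoid $(L_{\mathcal G} \rightrightarrows 0_M; \mathcal G \rightrightarrows M)$ I will write $\widetilde{\mathcal G} \rightrightarrows \widetilde M$ for its homogenization. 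Recall that $\widetilde{\mathcal G} = s^\ast \widetilde M$ is the action groupoid of the $\mathcal G$-action on $\widetilde M$ dual to the $\mathcal G$-action on $L$, that its Euler vector field $\mathcal Z$ is multiplicative (Remark \ref{rem:Euler_mult}), and that, the $\mathcal G$-action on $L^\ast$ being fibrewise linear and hence $\mathbb R^\times$-equivariant, $\mathcal Z$ is both $\widetilde s$- and $\widetilde t$-related to $Z$.

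For part (1) I would start from a contact groupoid $(\mathcal G, H)$ with $\nu(H) = L_{\mathcal G}$, which by Remark \ref{rem:LB-groupoids} and Example \ref{ex:mult_contact} is the same as an LB-groupoid carrying a multiplicative symplectic Atiyah $2$-form $\omega$ (with $L_{\mathcal G} \simeq t^\ast L$, core $L$). By Theorem \ref{theor:homo}, Proposition \ref{prop:mult_Atiyah} and Example \ref{ex:contact} the homogenization $\widetilde\omega$ is a multiplicative symplectic form on $\widetilde{\mathcal G}$ with $\mathcal L_{\mathcal Z}\widetilde\omega = \widetilde\omega$, so $(\widetilde{\mathcal G}, \widetilde\omega)$ is a symplectic groupoid and $\widetilde M$ inherits a Poisson structure $\widetilde J$ for which $\widetilde t$ is Poisson. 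I would then check $\mathcal L_Z \widetilde J = -\widetilde J$ by a scaling argument: the flow $\Phi_t$ of $\mathcal Z$ is by groupoid automorphisms, so $\Phi_t^\ast\widetilde\omega$ is again multiplicative symplectic and, from $\mathcal L_{\mathcal Z}\widetilde\omega = \widetilde\omega$, equals $e^t\widetilde\omega$; pushing its Poisson structure forward along $\widetilde t$, using $\widetilde t \circ \Phi_{-t} = \phi_{-t} \circ \widetilde t$ with $\phi_t$ the flow of $Z$, gives $(\phi_{-t})_\ast \widetilde J = e^{-t}\widetilde J$, which is exactly $\mathcal L_Z\widetilde J = -\widetilde J$. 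Dehomogenizing, $(\widetilde M, \widetilde J, Z)$ becomes a Jacobi structure $(L, J)$ on $M$, and the assertion that $t_L : L_{\mathcal G} \to L$ is a Jacobi map is the dehomogenization of ``$\widetilde t$ is a Poisson map''.

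For part (2), given an integrable Jacobi manifold $(M, L, J)$ I would let $\mathcal G \rightrightarrows M$ be the source-simply connected integration of the jet algebroid $(\mathfrak J^1 L)_J$; source-simple-connectedness lets me integrate the flat connection $J^\sharp$ to a $\mathcal G$-action on $L$, and dualizing and removing the zero section produces $\widetilde{\mathcal G} = s^\ast \widetilde M$. The crucial point is to identify $\operatorname{Lie}(\widetilde{\mathcal G})$ — the action algebroid $p^\ast(\mathfrak J^1 L)_J$ — with the cotangent algebroid $(T^\ast\widetilde M)_{\widetilde J}$ of the Poissonization under the isomorphisms $T\widetilde M \simeq p^\ast DL$, $T^\ast\widetilde M \simeq p^\ast \mathfrak J^1 L$ of Theorem \ref{theor:homo}: the anchors agree (both are $\widetilde J^\sharp$) and the brackets agree on homogeneous $1$-forms, which are precisely the homogenizations $\widetilde\psi$ of sections of $\mathfrak J^1 L$ and on which both brackets are given by the same Gerstenhaber-type formula, since homogenization respects $d_D$, $\mathcal L$ and contractions. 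Granting this, $\widetilde{\mathcal G}$ is the source-simply connected symplectic groupoid of the integrable Poisson manifold $(\widetilde M, \widetilde J)$, so it carries a unique multiplicative symplectic $\widetilde\omega$ with $\widetilde t$ Poisson; running the scaling argument in reverse — from $\mathcal L_Z\widetilde J = -\widetilde J$ and uniqueness of $\widetilde\omega$ on the source-simply connected $\widetilde{\mathcal G}$ — forces $\Phi_t^\ast\widetilde\omega = e^t\widetilde\omega$, i.e. $\mathcal L_{\mathcal Z}\widetilde\omega = \widetilde\omega$, so $\widetilde\omega$ is homogeneous and dehomogenizes (Theorem \ref{theor:homo}, Proposition \ref{prop:mult_Atiyah}, Example \ref{ex:contact}) to a multiplicative symplectic Atiyah $2$-form on $L_{\mathcal G} = t^\ast L$, equivalently to a multiplicative contact structure $H$ on $\mathcal G$ with $\nu(H) = L_{\mathcal G}$ differentiating to $(L, J)$. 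Bijectivity and the stated correspondence then follow from the analogous Poisson/symplectic-groupoid statement and the categorical equivalence underlying the homogenization scheme, together with the fact that $\mathcal G$ is source-simply connected iff $\widetilde{\mathcal G} = s^\ast\widetilde M$ is.

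The hard part, I expect, is precisely the identification in part (2) of $\operatorname{Lie}(\widetilde{\mathcal G})$ with the cotangent algebroid $(T^\ast\widetilde M)_{\widetilde J}$ together with the matching of the two homogeneity conditions $\mathcal L_Z\widetilde J = -\widetilde J$ and $\mathcal L_{\mathcal Z}\widetilde\omega = \widetilde\omega$ under the Lie functor — in other words, checking that homogenization intertwines differentiation of Jacobi/jet data with differentiation of Poisson/cotangent data. Everything genuinely new compared with the trivial-line-bundle case treated in \cite{CZ2007} is absorbed into the homogenization of Atiyah tensors developed in Section \ref{Sec:hom_trick}, which is what makes the generic line bundle case go through without invoking Spencer operators directly.
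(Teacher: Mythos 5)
Your proposal is correct and follows essentially the route the paper itself takes: the proof of Theorem \ref{theor:int_Jacobi} is deferred to Section \ref{Sec:Cont_Group}, where it is carried out by Poissonizing, integrating the Poissonization to a symplectic groupoid, identifying $(T^\ast \widetilde M)_{\widetilde J}$ with the action algebroid of $(\mathfrak J^1 L)_J$ on $\widetilde M$ (your ``crucial point'' is literally Lemma \ref{lem:Poiss}), and dehomogenizing the multiplicative symplectic form via Proposition \ref{prop:mult_Atiyah} and Example \ref{ex:mult_contact} (this is Theorem \ref{theor:Jac_hom_Poiss}). The one step where you genuinely diverge is the homogeneity of $\widetilde\omega$: the paper obtains it through Lie algebroid derivations and Spencer operators (Lemma \ref{lem:hP} and the computation (\ref{eq:3}) in Proposition \ref{prop:hP}, showing that the derivation $\mathcal L_\zeta - \mathbb 1$ integrates to a multiplicative vector field $Z$ with $\mathcal L_Z \Omega = \Omega$), whereas you exploit that the Euler vector field is already present on $\widetilde{\mathcal G} = s^\ast \widetilde M$ and argue by uniqueness of the multiplicative symplectic form under the groupoid automorphisms of its flow. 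Your scaling argument is more elementary and does work; what the paper's route buys is the more general Proposition \ref{prop:hP}, valid for an arbitrary multiplicative homogeneity vector field rather than an Euler vector field, which is then reused for the Nijenhuis, homogeneous Poisson--Nijenhuis and holomorphic refinements. One small point you should add: $\mathcal L_{\mathcal Z}\widetilde\omega = \widetilde\omega$ alone is not homogeneity in the sense of Definition \ref{def:homo} --- Proposition \ref{prop:homo} also requires $h_{-1}^\ast \widetilde\omega = -\widetilde\omega$, and Theorem \ref{theor:homo} needs the full $\mathbb R^\times$-equivariance before you may dehomogenize $\widetilde\omega$ to an Atiyah $2$-form; your uniqueness argument applies verbatim to the automorphism covering $h_{-1}$ (which pulls $\widetilde J$ back to $-\widetilde J$), so the gap closes immediately, but it must be said.
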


Theorem \ref{theor:int_Jacobi} was first proved by Crainic and Zhu in \cite{CZ2007}, via the homogenization scheme. However, they only consider the case when the Jacobi bundle is trivial. Later Crainic and Salazar reproved Theorem \ref{theor:int_Jacobi} using the so called \emph{Spencer operators} \cite{CS2015,CSS2015}, without any reference to homogenization.

In this paper we provide a proof of the general case exploiting homogenization, thus filling the gap between the proofs of \cite{CZ2007} and \cite{CS2015} (Theorem \ref{theor:Jac_hom_Poiss}). However, our main concern is generalizing Theorem \ref{theor:int_Jacobi} to the holomorphic setting. This is done in the last Section \ref{Sec:Hol_Jac}. Also in this case we use the homogenization scheme. The homogenization of a holomorphic Jacobi manifold is a homogeneous holomorphic Poisson manifold. In particular, we need to discuss, in some details, the integration problem for homogeneous holomorphic Poisson manifolds. This is done in the present section by stages. We first need to discuss integration of Poisson manifolds equipped with additional compatible structures, namely Nijenhuis (more specifically complex) structures, homogeneity vector fields, or both. Accordingly, we provide a hierarchy of integration theorems. Some of those already appeared in the literature. We provide (partially) new proofs relying on the language of Spencer operators \cite{CSS2015}, which we think is particularly efficient when working with differential forms on Lie groupoids. 

\subsection{Spencer operators}

We begin recalling what are Spencer operators. Conceptually they are \emph{infinitesimal data} associated to multiplicative (vector valued) forms on Lie groupoids. We will only need \emph{scalar valued} Spencer operators. In this case, the original definition \cite{CSS2015} boils down to the following. Let $k$ denote a non-negative integer. A (scalar valued) \emph{$k$-Spencer operator} on a Lie algebroid $A \Rightarrow M$ is a pair $(\mathcal D, \ell)$ consisting of
\begin{itemize}
\item[$\triangleright$] a first order differential operator $\mathcal D : \Gamma (A) \to \Omega^k (M)$, and 
\item[$\triangleright$] a vector bundle map $\ell : A \to \wedge^{k-1} T^\ast M$
\end{itemize}
such that
\[
\mathcal D (f \alpha) = f \mathcal D (\alpha) + df \wedge \ell (\alpha),
\]
and, additionally,
\begin{equation}\label{eq:Spencer}
\begin{aligned}
\mathcal L_{\rho (\alpha)} \mathcal D(\beta) - \mathcal L_{\rho (\beta)} \mathcal D (\alpha) - \mathcal D ([\alpha, \beta]) & = 0 \\
\mathcal L_{\rho (\alpha)} \ell (\beta) + i_{\rho (\beta)} \mathcal D (\alpha) - \ell ([\alpha, \beta])& = 0 \\
i_{\rho (\alpha)} \ell (\beta) + i_{\rho (\beta)} \ell (\alpha) & = 0
\end{aligned}
\end{equation}
for all $\alpha, \beta \in \Gamma (A)$.
Notice that this definition is equivalent to the earlier definition of \emph{IM differential form} \cite{BC2012} (but not exactly the same). The relevance of Spencer operators resides in the following

\begin{theorem}[Crainic, Salazar, Struchiner {\cite{CSS2015}}]
\quad
\begin{enumerate}
\item Let $\mathcal G \rightrightarrows M$ be a Lie groupoid with Lie algebroid $A$, and let $\omega \in \Omega^k (\mathcal G)$ be a multiplicative $k$-form on $\mathcal G$. Then $A$ is equipped with a canonical $k$-Spencer operator $(\mathcal D, \ell)$, called the differentiation of $\omega$, and given by 
\begin{equation}\label{eq:Spencer_groupoid}
\begin{aligned}
\mathcal D (\alpha) & = 1^\ast (\mathcal L_{\overrightarrow \alpha} \omega) \\
\ell (\alpha) & = 1^\ast (i_{\overrightarrow \alpha} \omega)
\end{aligned}
\end{equation}
for all $\alpha \in \Gamma (A)$, where $1 : M \to \mathcal G$ is the unit, and $\overrightarrow \alpha$ is the right invariant vector field on $\mathcal G$ corresponding to $\alpha$.
\item Conversely, let $A \Rightarrow M$ be an integrable Lie algebroid equipped with a $k$-Spencer operator $(\mathcal D, \ell)$, and let $\mathcal G$ be its source-simply connected integration. Then $\mathcal G$ is equipped with a unique multiplicative $k$-form $\alpha$, called the integration of $(\mathcal D, \ell)$, and satisfying (\ref{eq:Spencer_groupoid}).
\end{enumerate}

Integration and differentiation establish a one-to-one correspondence between integrable Lie algebroids equipped with a $k$-Spencer operator and source-simply connected Lie groupoids equipped with a multplicative $k$-form.
\end{theorem}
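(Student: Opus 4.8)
The statement has three components: the \emph{differentiation} construction sending a multiplicative $k$-form to a $k$-Spencer operator, the \emph{integration} construction going back over a source-simply connected groupoid, and the verification that the two are mutually inverse. The plan is to obtain differentiation by a direct computation with Cartan calculus on right-invariant vector fields, to obtain integration by recognizing a $k$-Spencer operator as the infinitesimal counterpart of a suitable groupoid morphism and invoking Lie's second theorem (or, equivalently, by solving an overdetermined system along the source fibers), and to deduce the ``mutually inverse'' clause from a uniqueness statement.

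\emph{Differentiation.} Given a multiplicative $\omega \in \Omega^k(\mathcal G)$, set $\mathcal D(\alpha) := 1^\ast(\mathcal L_{\overrightarrow{\alpha}}\omega)$ and $\ell(\alpha) := 1^\ast(i_{\overrightarrow{\alpha}}\omega)$ as in (\ref{eq:Spencer_groupoid}). The Leibniz rule for $\mathcal D$ follows from $\overrightarrow{f\alpha} = (t^\ast f)\,\overrightarrow{\alpha}$ together with $\mathcal L_{(t^\ast f)\overrightarrow{\alpha}}\omega = (t^\ast f)\mathcal L_{\overrightarrow{\alpha}}\omega + d(t^\ast f)\wedge i_{\overrightarrow{\alpha}}\omega$, after pulling back along the unit and using $t\circ 1 = \operatorname{id}_M$; the $C^\infty(M)$-linearity of $\ell$ is immediate for the same reason. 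For the three identities (\ref{eq:Spencer}) the key input is an infinitesimal form of multiplicativity: differentiating $m^\ast\omega = \operatorname{pr}_1^\ast\omega + \operatorname{pr}_2^\ast\omega$ in the direction of right-invariant fields on the second leg shows that $i_{\overrightarrow{\alpha}}\omega$ and $\mathcal L_{\overrightarrow{\alpha}}\omega$ are controlled on all of $\mathcal G$ by their restrictions to the units. The identities then drop out by applying a second right-invariant field, restricting to units, and using that right-invariant fields are closed under bracket (realizing the algebroid bracket) together with the standard commutation rules $[\mathcal L_X,\mathcal L_Y] = \mathcal L_{[X,Y]}$ and $[\mathcal L_X, i_Y] = i_{[X,Y]}$. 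This part is routine.

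\emph{Integration and uniqueness.} Let $\mathcal G$ be the source-simply connected integration of $A$. For uniqueness, if two multiplicative forms have the same Spencer operator, their difference $\eta$ is multiplicative with vanishing Spencer operator, so $i_{\overrightarrow{\alpha}}\eta$ and $\mathcal L_{\overrightarrow{\alpha}}\eta$ vanish at the units; by the propagation noted above they vanish along each source fiber, and since the right-invariant fields span the tangent spaces of the (connected, simply connected) source fibers one gets $\eta = 0$. For existence, following Remark~\ref{rem:mult_tens} I would view a multiplicative $k$-form as a fibered groupoid map $\omega' : \mathfrak T^{k-1,0}\mathcal G \to T^\ast\mathcal G$ — a VB-groupoid map $\omega_\flat : T\mathcal G \to T^\ast\mathcal G$ in the familiar case $k = 2$ — and identify a $k$-Spencer operator on $A$ with the corresponding morphism between the Lie algebroids of $\mathfrak T^{k-1,0}\mathcal G$ and $T^\ast\mathcal G$. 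Since $A$ is integrable, so are the groupoids built from it, and $\mathfrak T^{k-1,0}\mathcal G$ is source-simply connected (it is an affine bundle over $\mathcal G$, so its source fibers are affine bundles over those of $\mathcal G$); Lie's second theorem then integrates the infinitesimal morphism to $\omega'$, and one checks the result comes from a genuine multiplicative form satisfying (\ref{eq:Spencer_groupoid}). More in the spirit of \cite{CZ2007, CS2015}, one may instead solve directly, along the source fibers, the overdetermined system governing $\omega$ that is assembled from $\mathcal D(\alpha)$ and $\ell(\alpha)$ with $\omega|_M = 0$, the three identities (\ref{eq:Spencer}) being precisely the Frobenius-type integrability conditions and simple connectedness killing the monodromy.

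\emph{Mutually inverse, and the main obstacle.} That the two constructions are inverse follows from uniqueness: differentiating the integrated form returns $(\mathcal D, \ell)$ by construction, and the form integrating the Spencer operator of a given multiplicative $\omega$ must be $\omega$, since $\omega$ itself satisfies (\ref{eq:Spencer_groupoid}). The hard part is the existence half of the integration: making rigorous that a $k$-Spencer operator \emph{is} the infinitesimal datum of a groupoid morphism — i.e.\ identifying the Lie algebroids of $\mathfrak T^{k-1,0}\mathcal G$ and $T^\ast\mathcal G$ and checking the hypotheses of Lie's second theorem — or, in the direct approach, verifying that the fiberwise solutions depend smoothly on the base point and assemble into a genuinely \emph{multiplicative} form on all of $\mathcal G$. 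Both routes hinge on the Spencer identities (\ref{eq:Spencer}) being exactly the obstruction that has to vanish, which is the technical heart of \cite{CSS2015}.
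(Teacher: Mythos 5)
The paper does not actually prove this statement: it is imported verbatim from \cite{CSS2015} and used as a black box, so there is no in-paper argument to compare yours against. Judged on its own terms, your outline follows the standard route of \cite{BC2012,CSS2015}. The differentiation half and the uniqueness part of the integration half are essentially complete: the Leibniz rule from $\overrightarrow{f\alpha}=(t^\ast f)\,\overrightarrow{\alpha}$ and Cartan calculus is correct, and the uniqueness argument via $1^\ast \eta = 0$ together with the propagation of $i_{\overrightarrow{\alpha}}\eta$ and $\mathcal L_{\overrightarrow{\alpha}}\eta$ along the source fibers is the standard one (you should isolate and prove that propagation lemma explicitly, since it is also the input you need to verify the identities (\ref{eq:Spencer}) in part (1), which you currently dispatch as ``routine''). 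The existence half, by contrast, is a plan rather than a proof: the assertion that a $k$-Spencer operator satisfying (\ref{eq:Spencer}) is \emph{exactly} the infinitesimal counterpart of a groupoid morphism $\mathfrak T^{k-1,0}\mathcal G \to T^\ast \mathcal G$, i.e.\ a morphism of the corresponding Lie algebroids, and the verification that the morphism produced by Lie's second theorem is fiberwise multilinear and alternating (hence a genuine multiplicative $k$-form satisfying (\ref{eq:Spencer_groupoid})), is precisely the technical content of \cite{BC2012,CSS2015}, and you defer it explicitly. That is the same gap the paper itself leaves by citing \cite{CSS2015}; as an account of how the theorem is proved your sketch is accurate and correctly locates the difficulty, but it is not a self-contained proof.
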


\begin{example}\label{ex:symplectic_groupoid}
It is well known that Poisson manifolds integrate to \emph{symplectic groupoids}, i.e.~groupoids equipped with a multiplicative symplectic form. Recall from Example \ref{ex:Nijenhuis} that a Poisson manifold $(M, \pi)$ determines a \emph{cotangent Lie algebroid} $(T^\ast M)_\pi$, i.e.~a Lie algebroid structure on $T^\ast M$. The Lie bracket $[-,-]_\pi$ on sections $\Omega^1 (M)$ of $(T^\ast M)_\pi$ is given by Equation (\ref{eq:brack_pi}), and the anchor $(T^\ast M)_\pi \to TM$ is the vector bundle map $\pi^\sharp : T^\ast M \to TM$ induced by $\pi$. A Poisson manifold is said to be \emph{integrable} if its cotangent algebroid is so. The following \emph{integration results} hold for Poisson manifolds.

If $(\mathcal G \rightrightarrows M, \omega)$ is a symplectic groupoid, then $M$ is canonically equipped with a Poisson structure $\pi$. Conversely, if $(M, \pi)$ is an integrable Poisson manifold and $\mathcal G$ is a source-simply connected integration of $(T^\ast M)_\pi$, then $\mathcal G$ is canonically equipped with a multiplicative symplectic structure $\omega$. These constructions establish a one-to-one correspondence between integrable Poisson manifolds and source-simply connected symplectic groupoids. If $(\mathcal D, \ell)$ is the Spencer operator on $(T^\ast M)_\pi$ corresponding to $\omega$, then 
\[
\mathcal D = d : \Omega^1 (M) \to \Omega^2 (M)
\]
is the de Rham differential, and
\[
\ell = \mathbb 1 : T^\ast M \to T^\ast M
\]
is the identity. We will often use these remarks in what follows.
\end{example}

    \subsection{Poisson-Nijenhuis}
    We now pass to the integration problem for Poisson-Nijenhuis manifolds. This has been first discussed in \cite{SX2007}. Recall that a \emph{Poisson-Nijenhuis manifold} is a manifold $M$ equipped with a \emph{Poisson-Nijenhuis structure}, i.e.~a pair $(\pi, N)$ consisting of a bivector $\pi$ and a $(1,1)$-tensor $N : TM \to TM$ such that
\begin{enumerate}
\item $\pi$ is Poisson,
\item $N$ is Nijenhuis (i.e.~its torsion vanishes),
\item the following formulas hold:
\begin{align}
\pi (\alpha, N^\ast \beta) + \pi (\beta, N^\ast \alpha) & = 0  \label{eq:1}\\
\mathcal L_{\pi^\sharp \alpha} N^\ast \beta - \mathcal L_{\pi^\sharp \beta} N^\ast \alpha - d \pi (N^\ast \alpha, \beta) & = N^\ast [\alpha, \beta]_\pi \label{eq:2}
\end{align}
\end{enumerate}
where $N^\ast : T^\ast M \to T^\ast M$ is the transpose of $N$.

\begin{remark}
Equations (\ref{eq:1}) and (\ref{eq:2}) can be actually written in several equivalent ways. The present one is particularly convenient in our setting. Indeed it precisely says that $(d \circ N^\ast, N^\ast)$ is a \emph{Spencer operator} for the cotangent algebroid $(T^\ast M)_\pi$.
\end{remark}

From (\ref{eq:1}), tensor $\pi_N = \pi (N -,-)$ is actually a bi-vector. It then follows from (\ref{eq:2}) and the other axioms, that $\pi_N$ is actually a Poisson bivector.

Let $(\pi, N)$ be a Poisson-Nijenhuis structure. If $\pi$ is non degenerate we denote by $\Omega$ the corresponding symplectic form, and call $(\Omega, N)$ a \emph{symplectic-Nijenhuis structure}. In this case tensor $\Omega_N = \Omega (N -, -)$ is actually a (non-necessarily non-degenerate) closed $2$-form.

\begin{lemma}[Magri, Morosi {\cite{MM1984}}]\label{lem:Magri-Morosi}
Let $\pi, N$ be a Poisson bivector, and a $(1,1)$-tensor respectively. Assume that $\pi$ is non-degenerate, so $\pi = \Omega^{-1}$ for some symplectic form $\Omega$. Assume also that Equation (\ref{eq:1}) holds, so that both $\Omega_N := \Omega (N-,-)$, and $\Omega_{N^2} = \Omega (N^2-,-)$ are $2$-forms. Then the following conditions are equivalent
\begin{enumerate}
\item $\Omega_N$ and $\Omega_{N^2}$ are closed;
\item the torsion of $N$ vanishes and Equation (\ref{eq:2}) holds, i.e.~$(\pi, N)$ is a Poisson-Nijenhuis structure.
\end{enumerate}
\end{lemma}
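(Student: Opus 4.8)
The statement is local and purely tensorial, so I would work in a neighbourhood of an arbitrary point and exploit the non-degeneracy of $\pi$ systematically, translating everything into the language of the symplectic form $\Omega = \pi^{-1}$. The implication $(2)\Rightarrow(1)$ is the easy direction: if $(\pi,N)$ is Poisson–Nijenhuis then $N$ is Nijenhuis and one knows (from the compatibility of $\pi$ and $N$, axioms \eqref{eq:1}–\eqref{eq:2}) that $\pi_N = \pi(N-,-)$ is again a Poisson bivector; dualizing, $\Omega_N = \Omega(N-,-) = (\pi_N)^{-1}$-type expression is closed, and the same argument applied to the iterate $(\pi, N^2)$ (which is again Poisson–Nijenhuis, a standard consequence of the Poisson–Nijenhuis axioms) shows $\Omega_{N^2}$ is closed. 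So the content is the reverse implication $(1)\Rightarrow(2)$.

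For $(1)\Rightarrow(2)$, my strategy is to encode the data as a Spencer operator on the cotangent algebroid $(T^\ast M)_\pi$ and reduce the torsion-freeness and \eqref{eq:2} to closedness statements, reversing the Remark that precedes the Lemma. Concretely: by the discussion in Example~\ref{ex:symplectic_groupoid}, since $\pi$ is non-degenerate the cotangent algebroid is isomorphic (via $\pi^\sharp$ and $\Omega_\flat$) to the tangent algebroid $TM$ with its standard bracket, and under this identification $(d\circ N^\ast, N^\ast)$ being a Spencer operator is equivalent to the single closedness condition $d\Omega_N = 0$. Unwinding: the Spencer axioms \eqref{eq:Spencer} for the pair $(\mathcal D,\ell) = (d\circ N^\ast, N^\ast)$ split into three identities; the first two together are exactly the content of \eqref{eq:2} (this is the Remark), and the third, $i_{\rho(\alpha)}\ell(\beta) + i_{\rho(\beta)}\ell(\alpha) = 0$, is exactly the skew-symmetry condition \eqref{eq:1}, which we are assuming. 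Thus, granting \eqref{eq:1}, the condition $d\Omega_N = 0$ already delivers \eqref{eq:2}. What remains is to extract the vanishing of the Nijenhuis torsion $\mathcal T_N$ of $N$ from the conjunction of $d\Omega_N = 0$ and $d\Omega_{N^2}=0$.

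The remaining step — that $d\Omega_N = 0 = d\Omega_{N^2}$ forces $\mathcal T_N = 0$ — is the heart of the lemma and the place where I expect to spend real effort. The plan is to compute $d\Omega_{N^2}$ in terms of $d\Omega_N$, $N$, and $\mathcal T_N$: using $N^2 = $ (composition) and the Leibniz-type identities for the exterior derivative of $\Omega(N-,-)$, one obtains a Cartan-calculus formula of the shape $d\Omega_{N^2}(X,Y,Z) = (\text{terms built from } d\Omega_N) + \Omega(\mathcal T_N(X,Y), N Z) + \text{cyclic}$, i.e. the "extra" piece beyond $d\Omega_N$-terms is precisely a contraction of the torsion with $\Omega$ and $N$. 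Setting $d\Omega_N = 0$ collapses the right-hand side to the torsion term $\Omega(\mathcal T_N(X,Y),NZ) + \text{(cyclic permutations)}$; then $d\Omega_{N^2}=0$ says this cyclic sum vanishes identically, and a short linear-algebra argument (polarization / using that $\Omega$ is non-degenerate and $N$ arbitrary-generic, or the standard trick of symmetrizing the cyclic identity) forces $\mathcal T_N = 0$ outright. The main obstacle is carrying out this Cartan-calculus identification cleanly — one has to be careful with the sign conventions and with the fact that $N$ need not be invertible, so the reduction "$\Omega(\mathcal T_N(X,Y),NZ)=0$ for all $X,Y,Z$" $\Rightarrow$ "$\mathcal T_N=0$" needs the cyclic structure rather than mere non-degeneracy of $\Omega$; I would either reproduce Magri–Morosi's original computation or invoke the known identity $d\Omega_{N^2} - (\text{$d\Omega_N$-terms}) = $ torsion-contraction directly.
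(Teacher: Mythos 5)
The paper does not prove this lemma: it is quoted from Magri--Morosi \cite{MM1984} with no argument supplied, so there is no in-paper proof to compare against. Your plan follows the standard route and its overall decomposition is sound: granting Equation (\ref{eq:1}) and $d\Omega=0$, one shows (i) that Equation (\ref{eq:2}) is equivalent to $d\Omega_N=0$, and (ii) that, once $d\Omega_N=0$ holds, $\mathcal T_N=0$ is equivalent to $d\Omega_{N^2}=0$; together these give both implications of the lemma at once. Your Spencer-operator packaging of step (i) is consistent with the Remark preceding the Lemma and does work: transporting $(d\circ N^\ast,N^\ast)$ along the Lie algebroid isomorphism $\pi^\sharp:(T^\ast M)_\pi\to TM$ turns it into the pair $(d\circ(\Omega_N)_\flat,(\Omega_N)_\flat)$ on the tangent algebroid, and a short Cartan-calculus check shows that a pair of this shape satisfies the Spencer identities precisely when the underlying $2$-form is closed, the third identity in (\ref{eq:Spencer}) being exactly the skew-symmetry condition (\ref{eq:1}). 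You should, however, actually carry out that check rather than attribute it to Example \ref{ex:symplectic_groupoid}, which is a statement about groupoids, not about bare algebroids.

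Two inaccuracies need fixing. First, your warm-up argument for $(2)\Rightarrow(1)$ is wrong as stated: $\Omega_N$ is not a ``$(\pi_N)^{-1}$-type'' expression. Since $\pi_N^\sharp=N\circ\pi^\sharp$, the inverse of $\pi_N$ (when it exists) is $\Omega_{N^{-1}}$, not $\Omega_N$, so closedness of $\Omega_N$ does not follow from $\pi_N$ being Poisson. This is harmless only because your main argument (i)$+$(ii) already yields both directions, making the separate sketch redundant. Second, the key identity in step (ii) does not have the shape $\Omega(\mathcal T_N(X,Y),NZ)+\text{cyclic}$; the correct statement, which the paper itself quotes as \cite[Equation (B.3.12)]{MM1984} in the proof of Proposition \ref{prop:PN}, is that when $d\Omega=0$, $\Omega_N$ is skew-symmetric and $d\Omega_N=0$, one has $i_vi_w\, d\Omega_{N^2}=-i_{\mathcal T_N(v,w)}\Omega$. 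With this identity the equivalence $d\Omega_{N^2}=0\Leftrightarrow\mathcal T_N=0$ follows from non-degeneracy of $\Omega$ alone; no cyclic-summation trick and no invertibility of $N$ are needed, so the obstacle you anticipate at that point dissolves once the identity is stated correctly.
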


\begin{remark}
It follows from Lemma \ref{lem:Magri-Morosi} that a symplectic-Nijenhuis structure $(\Omega, N)$ is equivalent to a pair $(\Omega, \Omega')$ consisting of a symplectic form $\Omega$ and a closed $2$-form $\Omega'$ such that $\Omega_{N}' = \Omega_{N^2} = \Omega (N^2-, -)$ is also a closed $2$-form, where $N = \Omega^\sharp \circ \Omega'_\flat$. In particular $\Omega' = \Omega_N$.
\end{remark}

\begin{definition}
A \emph{symplectic-Nijenhuis groupoid} is a Lie groupoid $\mathcal G \rightrightarrows M$ equipped with a \emph{multiplicative symplectic-Nijenhuis structure}, i.e.~a symplectic-Nijenhuis structure $(\Omega, \mathcal N)$ such that both $\Omega$ and $\mathcal N$ are multiplicative.
\end{definition}

\begin{remark}
Recall that multiplicativity of $\Omega$ is equivalent to the induced vector bundle map $\Omega_\flat : T\mathcal G \to T^\ast \mathcal G$ being a VB-groupoid map (Remark \ref{rem:mult_tens}). Similarly, multiplicativity of $\mathcal N$ means that $\mathcal N : T \mathcal G \to T \mathcal G$ is a VB-groupoid map. It follows that $\Omega_\flat \circ \mathcal N$ and $\Omega_\flat \circ \mathcal N^2$ are also VB-groupoid map, i.e.~$\Omega_{\mathcal N}, \Omega_{\mathcal N^2}$ are multiplicative (closed) $2$-forms.
\end{remark}

\begin{proposition}\label{prop:PN}
\quad
\begin{enumerate}
\item Let $(\mathcal G \rightrightarrows M, \Omega, \mathcal N)$ be a symplectic-Nijenhuis groupoid. Then $M$ is equipped with a unique Poisson-Nijenhuis structure $(\pi, N)$ such that the target $t : \mathcal G \to M$ is a Poisson map, and $N = \mathcal N|_M$. The Poisson-Nijenhuis manifold $(M, \pi, N)$ is called the differentiation of $(\mathcal G, \Omega, \mathcal N)$.
\item Conversely, let $(M, \pi, N)$ be an \emph{integrable} Poisson-Nijenhuis manifold, i.e.~the cotangent algebroid $(T^\ast M)_\pi$ is integrable, and let $\mathcal G$ be its source-simply connected groupoid. Then $\mathcal G$ is equipped with a unique multiplicative symplectic-Nijenhuis structure $(\Omega, \mathcal N)$ inducing precisely the Poisson-Nijenhuis structure $(\pi, N)$ on $M$ by differentiation. The symplectic-Nijenhuis groupoid $(\mathcal G, \Omega, \mathcal N)$ is called the integration of $(M, \pi, N)$.
\end{enumerate} 
Integration and differentiation establish a one-to-one correspondence between integrable Poisson-Nijenhuis manifolds and source-simply connected symplectic-Nijenhuis groupoids. Shortly, Poisson-Nijenhuis manifolds integrate to symplectic-Nijenhuis groupoids.
\end{proposition}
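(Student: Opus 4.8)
The plan is to deduce the statement from the Spencer-operator correspondence of Crainic--Salazar--Struchiner, reorganising everything around three facts already available above. First, by Example~\ref{ex:symplectic_groupoid}, a multiplicative symplectic form $\Omega$ on a Lie groupoid $\mathcal G \rightrightarrows M$ corresponds to a Poisson structure $\pi$ on $M$ (with $t$ a Poisson map) whose cotangent algebroid $(T^\ast M)_\pi$ carries the Spencer operator $(d,\mathbb 1)$. Second, by the Remark following the definition of a Poisson--Nijenhuis manifold, for a pair $(\pi,N)$ the compatibility Equations~(\ref{eq:1})--(\ref{eq:2}) hold if and only if $(d\circ N^\ast, N^\ast)$ is a Spencer operator on $(T^\ast M)_\pi$. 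Third, a multiplicative $k$-form $\omega$ with Spencer operator $(\mathcal D,\ell)$ is closed iff $\mathcal D = d\circ\ell$: indeed, $d\omega$ is multiplicative with Spencer operator $(d\circ\mathcal D,\ \mathcal D - d\circ\ell)$ (a Cartan-calculus check), and a multiplicative form over a source-simply connected groupoid vanishes iff its Spencer operator does. Throughout I identify $A=(T^\ast M)_\pi$ with $T^\ast M$ via the core map of $\Omega_\flat$, which is exactly what the Spencer operator $(d,\mathbb 1)$ records.

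\emph{Differentiation.} Let $(\mathcal G,\Omega,\mathcal N)$ be a symplectic-Nijenhuis groupoid. Then $\Omega$ produces $\pi$ on $M$ as above. Since $\mathcal N$ is multiplicative, $\mathcal N:T\mathcal G\to T\mathcal G$ is a VB-groupoid endomorphism; it preserves both the unit part $TM\subset T\mathcal G|_M$ and the core $A$, so it restricts to a $(1,1)$-tensor $N:=\mathcal N|_{TM}$ on $M$ whose Nijenhuis torsion is the restriction of $\mathcal T_{\mathcal N}$ to $TM$, hence $N$ is Nijenhuis. The $2$-form $\Omega_{\mathcal N}=\Omega(\mathcal N-,-)$ is multiplicative (its flat map $\Omega_\flat\circ\mathcal N$ is a composition of VB-groupoid maps) and closed by the definition of a symplectic-Nijenhuis structure; by the closedness criterion its Spencer operator is $(d\circ\ell',\ell')$, and computing $\ell'(\alpha)=1^\ast(i_{\mathcal N\overrightarrow\alpha}\Omega)$ on units, together with skew-symmetry of $\Omega_{\mathcal N}$ (equivalently $\Omega$-self-adjointness of $\mathcal N$, which gives $\Omega_\flat\circ\mathcal N=\mathcal N^\ast\circ\Omega_\flat$), identifies $\ell'$ with the core map of $\mathcal N$ and the latter with $N^\ast$. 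Thus $(d\circ N^\ast,N^\ast)$ is a Spencer operator on $(T^\ast M)_\pi$, so Equations~(\ref{eq:1})--(\ref{eq:2}) hold; with $\pi$ Poisson and $N$ Nijenhuis this makes $(\pi,N)$ a Poisson--Nijenhuis structure, manifestly the unique one with $t$ a Poisson map and $N=\mathcal N|_M$.

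\emph{Integration.} Conversely, let $(M,\pi,N)$ be integrable Poisson--Nijenhuis and $\mathcal G$ the source-simply connected integration of $(T^\ast M)_\pi$. It carries a multiplicative symplectic form $\Omega$ inducing $\pi$, with Spencer operator $(d,\mathbb 1)$. By the Remark, $(d\circ N^\ast,N^\ast)$ is a Spencer operator; integrate it to a multiplicative $2$-form $\Omega'$ on $\mathcal G$, which is closed since $\mathcal D'=d\circ\ell'$. Put $\mathcal N:=\Omega^\sharp\circ\Omega'_\flat:T\mathcal G\to T\mathcal G$, a multiplicative $(1,1)$-tensor. Then $(\Omega_{\mathcal N})_\flat=\Omega_\flat\circ\Omega^\sharp\circ\Omega'_\flat=\Omega'_\flat$, so $\Omega_{\mathcal N}=\Omega'$ is skew and closed, while the core map of $\mathcal N$ is $\mathbb 1\circ N^\ast=N^\ast$, whence (again by $\Omega$-self-adjointness, forced here by $\Omega'$ being a $2$-form) the side map is $\mathcal N|_{TM}=N$. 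It remains to check that $\Omega_{\mathcal N^2}$ is closed; granting this, Lemma~\ref{lem:Magri-Morosi} applied to the genuine symplectic form $\Omega$ (with $\Omega_{\mathcal N}$ skew and closed) shows at once that $\mathcal N$ is Nijenhuis and that $(\Omega,\mathcal N)$ is a symplectic-Nijenhuis structure, which by the differentiation direction induces exactly $(\pi,N)$. To prove $\Omega_{\mathcal N^2}$ closed one uses that $(\pi,N^2)$ is again Poisson--Nijenhuis, so $(d\circ(N^2)^\ast,(N^2)^\ast)$ is a Spencer operator; its integration is a multiplicative closed $2$-form, and one identifies it with $\Omega_{\mathcal N^2}$ by checking that the Spencer operator of the multiplicative $2$-form $\Omega_{\mathcal N^2}$ is $(d\circ(N^2)^\ast,(N^2)^\ast)$ --- the $\ell$-part being the core map of $\Omega_\flat\circ\mathcal N^2$, i.e.~$(N^\ast)^2=(N^2)^\ast$, and the $\mathcal D$-part amounting to the identity $1^\ast(\mathcal L_{\overrightarrow\alpha}\Omega_{\mathcal N^2})=d\big((N^2)^\ast\alpha\big)$.

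Finally, the two constructions are mutually inverse: starting from $(\mathcal G,\Omega,\mathcal N)$, differentiating and re-integrating recovers the Spencer data $(d,\mathbb 1)$ and $(d\circ N^\ast,N^\ast)$, hence $\Omega$ and $\Omega'=\Omega_{\mathcal N}$, hence $\mathcal N=\Omega^\sharp\circ\Omega'_\flat$, by the uniqueness clauses in the Spencer correspondence and in Example~\ref{ex:symplectic_groupoid}; the reverse round trip is identical. The main obstacle is the last point above: deriving closedness of $\Omega_{\mathcal N^2}$ (equivalently, that $\mathcal N$ is Nijenhuis) from the Poisson--Nijenhuis axioms on $M$ --- in particular from $N$ being Nijenhuis. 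The Magri--Morosi lemma is unavailable on $M$ itself since $\pi$ need not be symplectic, so this has to be extracted through a direct computation of the Spencer operator of the multiplicative $2$-form $\Omega_{\mathcal N^2}$; this computation is the technical heart of the argument, and the analogue in the present Spencer-operator language of the corresponding step in \cite{LSX2009}. A secondary bookkeeping point is the consistent matching of the core and side maps of the VB-groupoid maps $\Omega_\flat$, $\Omega'_\flat$, $\mathcal N$ with $\mathbb 1$, $N^\ast$, $N$, which rests on the $\Omega$-self-adjointness that skew-symmetry of $2$-forms forces.
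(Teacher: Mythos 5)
Your differentiation direction and the overall architecture of the integration direction coincide with the paper's own argument: the Spencer operator of $\Omega$ is $(d,\mathbb 1)$, that of $\Omega_{\mathcal N}$ is $(d\circ N^\ast,N^\ast)$, one sets $\mathcal N:=\Omega^\sharp\circ\Omega'_\flat$, and everything reduces via Lemma~\ref{lem:Magri-Morosi} to the closedness of $\Omega_{\mathcal N^2}$. The problem is exactly at the point you yourself flag as ``the technical heart'': the route you sketch for $d\Omega_{\mathcal N^2}=0$ is circular. First, that $(\pi,N^2)$ is again Poisson--Nijenhuis is an unproved (and nontrivial) claim; Equation~(\ref{eq:1}) for $N^2$ does follow formally, but Equation~(\ref{eq:2}) for $N^2$ requires essentially the same torsion computation you are trying to avoid. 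Second, and more seriously, even granting that $(d\circ(N^2)^\ast,(N^2)^\ast)$ is a Spencer operator with integral $\Omega''$, identifying $\Omega''$ with $\Omega_{\mathcal N^2}$ requires matching Spencer operators. The $\ell$-parts agree, but by Cartan's formula the $\mathcal D$-part of $\Omega_{\mathcal N^2}$ is $\mathcal D(\alpha)=d\bigl((N^2)^\ast\alpha\bigr)+1^\ast\bigl(i_{\overrightarrow\alpha}\,d\Omega_{\mathcal N^2}\bigr)$, so the identity $1^\ast(\mathcal L_{\overrightarrow\alpha}\Omega_{\mathcal N^2})=d\bigl((N^2)^\ast\alpha\bigr)$ that you invoke is precisely the vanishing of the $\ell$-part of the Spencer operator of the closed multiplicative $3$-form $d\Omega_{\mathcal N^2}$ --- which, over a source-simply connected groupoid, is \emph{equivalent} to $d\Omega_{\mathcal N^2}=0$. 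As written, the identification step assumes the conclusion.

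The missing ingredient is the pointwise identity of Magri--Morosi \cite{MM1984} used in the paper's proof: for a closed $2$-form $\Omega$ and an $\Omega$-self-adjoint $(1,1)$-tensor $\mathcal N$ (self-adjointness being forced here by skew-symmetry of $\Omega_{\mathcal N}=\Omega'$), one has $i_v i_w\, d\Omega_{\mathcal N^2}=-\,i_{\mathcal T_{\mathcal N}(v,w)}\Omega$. Multiplicativity of $\mathcal N$ forces $\mathcal T_{\mathcal N}$ to be tangent to $M$ and to restrict there to $\mathcal T_N=0$, so $\langle \ell''(\alpha),v\wedge w\rangle=\langle\alpha,\mathcal T_{\mathcal N}(v,w)\rangle=0$ for all $\alpha\in\Omega^1(M)$ and $v,w\in TM$; since $d\Omega_{\mathcal N^2}$ is a closed multiplicative form on a source-simply connected groupoid, $\ell''=0$ gives $d\Omega_{\mathcal N^2}=0$, and only then does Lemma~\ref{lem:Magri-Morosi} yield $\mathcal T_{\mathcal N}=0$ globally. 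With this identity inserted in place of the appeal to $(\pi,N^2)$, your argument closes and agrees with the paper's; without it, the key step remains unproved.
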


\begin{proof}
(A version of) the theorem has been proven in \cite{SX2007} using the \emph{Universal Lifting Theorem} of \cite{ISX2012}. Using Lemma \ref{lem:Magri-Morosi} we provide an alternative proof via Spencer operators. Our proof is essentially the same as the one recently appeared in a local groupoid setting \cite{CMS2017b} (see also \cite{CMS2017a}). We report it here for completeness. So, let $(\mathcal G \rightrightarrows M, \Omega, \mathcal N)$ be a symplectic-Nijenhuis groupoid. Denote by $A$ the Lie algebroid of $\mathcal G$, and let $(\mathcal D, \ell)$ be the Spencer operator that differentiates $\Omega$. As $\Omega$ is non-degenerate, $\ell : A \to T^\ast M$ is an isomorphism, and we use it to identify $A$ with the cotangent algebroid $(T^\ast M)_\pi$ of the unique Poisson structure on $M$ such that the target is a Poisson map. From $d \Omega = 0$ it follows $\mathcal D = d \circ \ell$ (see Example \ref{ex:symplectic_groupoid}). Now consider the $2$-form $\Omega_{\mathcal N}$ and let $(\mathcal D_{\mathcal N}, \ell_{\mathcal N})$ be the corresponding Spencer operator. From $d\Omega_{\mathcal N} = 0$ and (\ref{eq:Spencer_groupoid}) it follows that $\mathcal D_{\mathcal N} = d \circ \ell_{\mathcal N}$. Let $N : TM \to TM$ be the transpose of $\ell_{\mathcal N} : T^\ast M \to T^\ast M$. We claim that $(\pi, N)$ is a Poisson-Nijenhuis structure. Namely, Equations (\ref{eq:1}) and (\ref{eq:2}) are equivalent to $(\mathcal D_{\mathcal N}, \ell_{\mathcal N})$ being a Spencer operator, and it remains to check that $N$ is torsion-free. To see this, notice, first of all, that, from multiplicativity, $\mathcal N$ is tangent to $M$, i.e.~it maps tangent vectors to $M$ to tangent vectors to $M$ and we denote by $\mathcal N|_M : TM \to TM$ the restriction. It immediately follows that the torsion of $\mathcal N$ is also tangent to $M$, and its restriction is the torsion of $\mathcal N |_M$. Hence it suffices to show that $\mathcal N|_M = N$. So, let $\alpha \in \Omega^1 (M)$, and $v \in TM$, and compute
\[
\begin{aligned}
\langle \alpha, N v \rangle & = \langle \ell_{\mathcal N} (\alpha), v \rangle
                                        = \langle 1^\ast (i_{\overrightarrow \alpha} \Omega_{\mathcal N}), v \rangle 
                                          = \Omega_{\mathcal N} (\overrightarrow \alpha, v) \\
                                          & = \Omega (\overrightarrow \alpha, \mathcal N v) 
                                           = \langle 1^\ast (i_{\overrightarrow \alpha} \Omega), \mathcal N v \rangle
                                           = \langle \alpha, \mathcal N v \rangle.
\end{aligned}
\]
This concludes the proof of the first part of the statement. 

For the second part, let $(M, \pi, N)$ be an integrable Poisson-Nijenhuis manifold, let $\mathcal G$ be the source-simply connected integration of $(T^\ast M)_\pi$, and let $\Omega$ be the multiplicative symplectic form on $\mathcal G$ such that the target is a Poisson map. Recall again that, in view of (\ref{eq:1}) and (\ref{eq:2}), $(d \circ N^\ast, N^\ast)$ is a Spencer operator on $(T^\ast M)_\pi$. So it induces a closed multiplicative form $\Omega'$ on $\mathcal G$ via integration. Put $\mathcal N = \Omega^\sharp \circ \Omega'_\flat$. We want to show that $(\mathcal G, \Omega, \mathcal N)$ is a symplectic-Nijenhuis groupoid. From multiplicativity of $\Omega$ and $\Omega'$, $\mathcal N$ is multiplicative as well. Additionally $\Omega_{\mathcal N} = \Omega'$ is a closed $2$-form by construction. From Lemma \ref{lem:Magri-Morosi}, it remains to check that $\Omega'_{\mathcal N} = \Omega_{\mathcal N^2} = \Omega (\mathcal N^2 -, -)$ is closed as well. It suffices to check that the Spencer operator $(\mathcal D'', \ell'')$ that differentiates $d\Omega_{\mathcal N^2}$ vanishes. As $d\Omega_{\mathcal N^2}$ is closed, it is enough to check that $\ell'' = 0$. Recall from \cite[Equation (B.3.12)]{MM1984} that, for a closed $2$-form $\Omega$ and a $(1,1)$-tensor $\mathcal N$ such that $\Omega (\mathcal N v, w) = \Omega (v, \mathcal N w)$, we have
\[
i_v i_w d \Omega_{\mathcal N^2} = - i_{\mathcal T_{\mathcal N}(v, w)} \Omega,
\] 
for all tangent vectors $v,w$. Now, exactly as above, it follows from multiplicativity of $\mathcal N$, that $\mathcal T_{\mathcal N}$ is tangent to $M$, and it agrees with $\mathcal T_N$ on $M$, hence $\mathcal T_{\mathcal N}$ vanishes on $M$. So
\[
\begin{aligned}
\langle \ell'' (\overrightarrow \alpha), v \wedge w \rangle & = \langle 1^\ast (i_{\overrightarrow \alpha} d \Omega_{\mathcal N^2}), v \wedge w \rangle = i_{\overrightarrow \alpha} i_v i_w d \Omega_{\mathcal N^2}
= - i_{\overrightarrow \alpha} i_{\mathcal T_{\mathcal N}(v, w)} \Omega \\
& = i_{\mathcal T_{\mathcal N}(v, w)}  i_{\overrightarrow \alpha} \Omega = \langle 1^\ast (i_{\overrightarrow \alpha} \Omega), \mathcal T_{\mathcal N}(v, w) \rangle
= \langle \alpha , \mathcal T_{\mathcal N}(v, w) \rangle = 0
\end{aligned}
\]
for all $\alpha \in \Omega^1 (M)$, and $v,w \in TM$.
\end{proof}

 \subsection{Holomorphic Poisson}
 \begin{remark}\label{rem:complex}
Let $N$ and $\mathcal N$ be as in Proposition \ref{prop:PN}. Then $N$ is a complex structure, i.e.~$N^2 = - \mathbb 1$, iff $\mathcal N$ is so. Indeed, being $N$ the restriction of $\mathcal N$ to $M$, it is clear that if $\mathcal N$ is a complex structure, so is $N$. Conversely, let $N$ be a complex structure, and compute the Spencer operator $(d \circ \ell, \ell)$ of the closed $2$-form $\Omega_{\mathcal N^2}$. We have
\[
\langle \ell (\alpha), v\rangle = \langle 1^\ast (i_{\overrightarrow \alpha}\Omega_{\mathcal N^2} ), v \rangle = \Omega ( \overrightarrow \alpha, N^2 v ) = \langle 1^\ast (i_{\overrightarrow \alpha}\Omega), N^2 v \rangle = - \langle \alpha, v\rangle, 
\]
for all $\alpha \in \Omega^1 (M)$ and $v \in TM$, and we can conclude that $\Omega_{\mathcal N^2} = -\Omega$ so that $\mathcal N^2 = - \mathbb 1$.
\end{remark}

We can put Proposition \ref{prop:PN} and Remark \ref{rem:complex} together to reprove the integration theorem for holomorphic Poisson manifolds. Recall that a \emph{holomorphic Poisson manifold} is a complex manifold $X = (M, j)$ equipped with a holomorphic Poisson bivector, i.e.~a bivector $\Pi \in \Gamma (\wedge^2 T^{1,0}X)$ such that $\overline \partial \Pi = 0$ ($\Pi$ is \emph{holomorphic}) and $[\Pi, \Pi]^S = 0$ ($\Pi$ is Poisson). From the real differential geometry point of view, a holomorphic Poisson manifold $(M, j, \Pi)$ is equivalent to a Poisson-Nijenhuis manifold $(M, \pi, N)$ such that the Nijenhuis tensor $N$ is an almost complex (hence complex) structure. Under this equivalence $(j, \Pi)$ corresponds to $(\operatorname{Re} \Pi, j)$. Under the inverse equivalence $(\pi, N)$ corresponds to $(N, \pi - i \pi_N)$. Additionally $\Pi$ is non-degenerate, hence it comes from a complex symplectic structure, iff $\pi$ is so, and, in this case, $\pi^{-1} = 2\operatorname{Re} \Pi^{-1}$.

A \emph{complex Lie groupoid} is a Lie groupoid $\mathcal G \rightrightarrows M$ equipped with a multiplicative complex structure $j_{\mathcal G}$. In particular, $M$ is equipped with a complex structure $j$ and all the structure maps of $\mathcal G$ are holomorphic. Put $X = (M, j)$. When we want to emphasize that both $\mathcal G$ and $M$ are complex manifold we write $\mathcal G \rightrightarrows X$.

The Lie algebroid of a complex Lie groupoid is a \emph{holomorphic Lie algebroid}. A holomorphic Lie algebroid is a Lie algebroid $A \to M$ equipped with an \emph{infinitesimal multiplicative} (\emph{IM}) \emph{complex structure}, that is a complex structure $j_A : TA \to TA$ which is an automorphism of the tangent Lie algebroid $TA \to TA$ (see \cite{LSX2009} for more details). Similarly as in the complex Lie groupoid case, it follows that $M$ possesses a, necessarily unique, complex structure $j : TM \to TM$ such that $(A, j_A) \to X := (M, j)$ is a holomorphic vector bundle and 1) the anchor $(A, j_A) \to TX$ is a holomorphic map, 2) holomorphic sections are preserved by the Lie bracket, and 3) the Lie bracket is complex bi-linear when restricted to holomorphic sections. Every holomorphic vector bundle $(A, j_A) \to X := (M, j)$ equipped with a Lie algebroid structure on $A \to M$ such that the above properties 1)--3) are fulfilled, arises in this way \cite{LSX2008}. Let $(A \to M, j_A)$ be a holomorphic Lie algebroid. The underlying Lie algebroid (obtained from $(A \to M, j_A)$ forgetting about the complex structure) is called the real Lie algebroid of $(A \to M, j_A)$ and will be denoted by $\operatorname{Re}A$. A holomorphic Lie algebroid $A$ is \emph{integrable} iff it is the Lie algebroid of a holomorphic Lie groupoid. So a holomorphic Lie algebroid $(A, j_A)$ is integrable iff $\operatorname{Re} A$ is so, and, in this case, the complex structure on the integrating groupoid $\mathcal G$ is the unique complex structure integrating the IM complex structure $j_A$. Notice that a holomorphic Poisson structure $\Pi$ on a complex manifold $X = (M, j)$ induces a holomorphic Lie algebroid structure, denoted $(T^\ast X)_\Pi$, on $T^\ast X$ in a (certain) canonical way: use the usual formulas to define the bracket and the anchor on holomorphic sections and extend to all sections by the Leibniz rule and $C^\infty (M)$-linearity. We say that $(X, \Pi)$ is integrable if $(T^\ast X)_\Pi$ is so. Additionally, we have $\operatorname{Re} (T^\ast X)_\Pi = (T^\ast M)_{4 \operatorname{Re} \Pi}$ \cite{LSX2008}.

A \emph{complex symplectic groupoid} is a complex Lie groupoid $(\mathcal G \rightrightarrows M, j_{\mathcal G})$ equipped with a multiplicative complex symplectic structure, i.e.~a multiplicative complex $2$-form $\Omega \in \Omega^{2,0} (\mathcal G)$ such that $\overline{\partial} \Omega = 0$ and $d \Omega = 0$ (hence $\partial \Omega = 0$ as well). We recall that a complex valued form is multiplicative if so are both its real and its imaginary part. It easily follows that a holomorphic symplectic groupoid $(\mathcal G \rightrightarrows M, j_{\mathcal G}, \Omega)$ is equivalent to a symplectic-Nijenhuis groupoid $(\mathcal G \rightrightarrows M, \omega, N)$ such that the multiplicative Nijenhuis tensor $N$ is a complex structure. Under this equivalence $(j_{\mathcal G}, \Omega)$ corresponds to $(\operatorname{Re} \Omega, j_{\mathcal G})$. Under the inverse equivalence $(\omega, N)$ corresponds to $(N, \omega - i \omega_N)$. Putting everything together we get the following

\begin{proposition}[Laurent-Gengoux, Sti\'enon, Xu {\cite{LSX2009}}]\label{prop:holP}
\quad
\begin{enumerate}
\item Let $(\mathcal G \rightrightarrows M, j_{\mathcal G}, \Omega)$ be a complex symplectic groupoid. Then $M$ is equipped with a unique complex structure $j$ and a unique holomorphic Poisson structure $\Pi$ such that the target is a holomorphic Poisson map. Holomorphic Poisson manifold $(M, j, \Pi)$ is called the differentiation of $(\mathcal G, j_{\mathcal G}, \Omega)$.
\item Conversely, let $(M, j, \Pi)$ be a holomorphic Poisson manifold. Then $(M, j, \Pi)$ is integrable iff $(M, 4 \operatorname{Re} \Pi)$ (or equivalently $(M, \operatorname{Re} \Pi)$) is so. In this case, let $\mathcal G$ be the source-simply connected groupoid of the cotangent algebroid $(T^\ast M)_{4 \operatorname{Re} \Pi}$. Then $\mathcal G$ is equipped with a unique multiplicative complex structure $j_{\mathcal G}$ and a unique multiplicative complex symplectic form $\Omega$ inducing precisely the complex structure $j$ and the holomorphic Poisson structure $\Pi$ on $M$ by differentiation. The complex symplectic groupoid $(\mathcal G, j_{\mathcal G})$ is called the integration of $(M, j, \Pi)$. Finally, the real symplectic groupoids $(\mathcal G, \operatorname{Re} \Omega)$ integrates the real Poisson manifold $(M, 4 \operatorname{Re} \Pi)$.
\end{enumerate}
Differentiation and Integration establish a one-to-one correspondence between integrable holomorphic Poisson manifolds and complex symplectic groupoids. Shortly, holomorphic Poisson manifolds integrate to complex symplectic groupoids.
\end{proposition}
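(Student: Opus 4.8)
The plan is to derive Proposition \ref{prop:holP} from Proposition \ref{prop:PN} by restricting the Poisson--Nijenhuis $\leftrightarrow$ symplectic--Nijenhuis correspondence to the subclasses singled out by the ``complex structure'' condition on the Nijenhuis tensor. Concretely: as recalled just before the statement, a holomorphic Poisson manifold $(M,j,\Pi)$ is precisely a Poisson--Nijenhuis manifold $(M,\pi,N)$ with $N$ an integrable almost complex structure, via $\pi = $ the real Poisson bivector underlying $\Pi$, $N = j$, and inversely $(j,\Pi) = (N,\pi - i\pi_N)$; similarly a complex symplectic groupoid $(\mathcal G, j_{\mathcal G},\Omega)$ is precisely a symplectic--Nijenhuis groupoid $(\mathcal G,\omega,\mathcal N)$ with $\mathcal N$ a multiplicative complex structure, via $\omega = \operatorname{Re}\Omega$, $\mathcal N = j_{\mathcal G}$, and inversely $\Omega = \omega - i\omega_{\mathcal N}$. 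So the whole statement reduces to checking that the differentiation and integration constructions of Proposition \ref{prop:PN} carry these two subclasses onto each other, and this is exactly what Remark \ref{rem:complex} provides.

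For differentiation, start from a complex symplectic groupoid, i.e.\ a symplectic--Nijenhuis groupoid $(\mathcal G,\omega,\mathcal N)$ with $\mathcal N$ a complex structure. Proposition \ref{prop:PN} produces a Poisson--Nijenhuis manifold $(M,\pi,N)$ with $N = \mathcal N|_M$, and by the easy half of Remark \ref{rem:complex} $N$ is then automatically an almost complex structure, hence (being Nijenhuis) a complex structure $j$; so $(M,\pi,N)$ corresponds to a holomorphic Poisson manifold $(X = (M,j),\Pi)$ with $\Pi = \pi - i\pi_N$. It remains to observe that the target $t\colon \mathcal G\to M$ is a holomorphic Poisson map: it is a Poisson map for $\pi$ by Proposition \ref{prop:PN}, and it is holomorphic because multiplicativity of $\mathcal N = j_{\mathcal G}$ forces the structure maps of $\mathcal G$ to intertwine $j_{\mathcal G}$ and $j$; together with the definition of $\Pi$ this makes $t$ a holomorphic Poisson map.

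For integration, start from an integrable holomorphic Poisson manifold $(M,j,\Pi)$ and pass to $(M,\pi,N)$. Integrability of $(M,j,\Pi)$ is by definition integrability of the holomorphic cotangent algebroid $(T^\ast X)_\Pi$, which is equivalent to integrability of its real Lie algebroid; and the latter is the cotangent algebroid of the underlying real Poisson structure, namely $(T^\ast M)_{4\operatorname{Re}\Pi}$ in the normalization recalled above (isomorphic to $(T^\ast M)_{\operatorname{Re}\Pi}$, since rescaling a Poisson bivector does not change the isomorphism class of its cotangent algebroid). Hence $(M,\pi,N)$ is an integrable Poisson--Nijenhuis manifold, and Proposition \ref{prop:PN} equips its source-simply connected integration $\mathcal G$ with a multiplicative symplectic--Nijenhuis structure $(\omega,\mathcal N)$ inducing $(\pi,N)$ by differentiation. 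By the converse half of Remark \ref{rem:complex}, $\mathcal N$ is again a complex structure, so it is a multiplicative complex structure $j_{\mathcal G}$ on $\mathcal G$ (equivalently, the complex structure integrating the IM complex structure $j_A$ on $(T^\ast X)_\Pi$), and $(\mathcal G,j_{\mathcal G},\Omega)$ with $\Omega = \omega - i\omega_{\mathcal N}\in\Omega^{2,0}(\mathcal G)$ is the desired complex symplectic groupoid; its differentiation is $(M,j,\Pi)$ by construction. Finally, the underlying real symplectic groupoid $(\mathcal G,\operatorname{Re}\Omega) = (\mathcal G,\omega)$ integrates the real Poisson manifold $(M,4\operatorname{Re}\Pi)$, again directly from Proposition \ref{prop:PN}, and the one-to-one correspondence is inherited from the one there.

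The proof is therefore ``soft'' once Proposition \ref{prop:PN} is available, and I do not expect a substantial new difficulty: the only points requiring care are (i) the bookkeeping of real/imaginary parts and normalization constants relating $\Pi$, $\operatorname{Re}\Pi$, $\operatorname{Re}(T^\ast X)_\Pi$ and $(T^\ast M)_{4\operatorname{Re}\Pi}$, and (ii) verifying that the dictionaries ``holomorphic Poisson $\leftrightarrow$ Poisson--Nijenhuis with complex $N$'' and ``complex symplectic groupoid $\leftrightarrow$ symplectic--Nijenhuis groupoid with complex $\mathcal N$'' genuinely intertwine differentiation with differentiation and integration with integration --- which, as explained, comes down to Remark \ref{rem:complex} (that $N$ is a complex structure iff $\mathcal N$ is) together with the compatibility of the two complex structures with the groupoid structure maps.
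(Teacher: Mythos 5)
Your proposal is correct and follows essentially the same route as the paper: the paper likewise obtains Proposition \ref{prop:holP} by combining Proposition \ref{prop:PN} with Remark \ref{rem:complex}, via the dictionaries ``holomorphic Poisson manifold $\leftrightarrow$ Poisson--Nijenhuis manifold with complex $N$'' and ``complex symplectic groupoid $\leftrightarrow$ symplectic--Nijenhuis groupoid with complex $\mathcal N$'', together with the identity $\operatorname{Re}(T^\ast X)_\Pi = (T^\ast M)_{4\operatorname{Re}\Pi}$ for the integrability bookkeeping. The two points you flag as requiring care are exactly the content the paper supplies in the discussion preceding the statement.
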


    \subsection{Homogeneous Poisson}
    
   As we mean to use the homogenization scheme to discuss integration of holomorphic Jacobi manifolds, we first discuss the integration problem for homogeneous Poisson manifolds. Recall that a \emph{homogeneous Poisson manifold} is a manifold $M$ equipped with a \emph{homogeneous Poisson structure} i.e.~a pair $(\pi, \zeta)$ consisting of a Poisson bivector $\pi$ and a vector field $\zeta$ such that $\mathcal L_\zeta \pi = [\zeta, \pi]^S = -\pi$. Vector field $\zeta$ is called the \emph{homogeneity vector field}.
   
\begin{example}
The homogenization $(\widetilde M, \widetilde J, Z)$ of a Jacobi manifold $(M, L, J)$ is a homogeneous Poisson manifold. Here $Z$ is the Euler vector field on $\widetilde M$.
\end{example}   
   
    If $\pi$ is non-degenerate and $\omega$ is the corresponding symplectic form, then $\mathcal L_\zeta \omega = d i_\zeta \omega = \omega $ and we say that $(M, \omega, \zeta)$ is a \emph{homogeneous symplectic manifold}. A homogeneous Poisson manifold $(M, \pi, \zeta)$ is integrable if the underlying Poisson structure $\pi$ is so.
    
\begin{remark}
For later use we collect in this remark some useful facts about multiplicative vector fields on Lie groupoids and their infinitesimal counterparts: \emph{Lie algebroid derivations}. For more details see \cite{MX1998}. Let $A \Rightarrow M$ be a Lie algebroid. A \emph{Lie algebroid derivation} of $A$ is a derivation $\delta : \Gamma (A) \to \Gamma(A)$ which is also a derivation of the Lie bracket $[-,-]$ on $\Gamma (A)$, i.e.
\[
\delta [\alpha, \beta] = [\delta \alpha, \beta] + [\alpha, \delta \beta].
\]
for all $\alpha, \beta \in \Gamma (A)$. It then follows that $\delta$ does also preserve the anchor $\rho$ in the sense that
\[
[\sigma (\delta), \rho (\alpha)] = \rho (\delta \alpha).
\] 
Let $\mathcal G \rightrightarrows M$ be a Lie groupoid with Lie algebroid $A$ and let $Z$ be a multiplicative vector field on it. Then $A$ is equipped with a canonical Lie algebroid derivation $\delta$, the differentiation of $Z$, given by formula
\begin{equation}\label{eq:mult_vf}
\begin{aligned}
\overrightarrow{\delta \alpha} = [Z, \overrightarrow{\alpha}]
\end{aligned}
\end{equation}
 Conversely if $A \Rightarrow M$ is an integrable Lie algebroid equipped with a Lie algebroid derivation and $\mathcal G$ is its source-simply connected integration, then $\mathcal G$ is equipped with a unique multiplicative vector field $Z$, the integration of $\zeta$, such that (\ref{eq:mult_vf}) holds. Integration and differentiation establish a one-to-one correspondence between integrable Lie algebroids equipped with a Lie algebroid derivation $\delta$ and source-simply connected Lie groupoids equipped with a multiplicative vector field $Z$. Additionally, $Z$ and $\delta$ are related by the following formula
for all $\alpha \in \Gamma (A)$. It follows that the symbol of $\delta$ agrees with the restriction $Z|_M$.
\end{remark}

\begin{lemma}\label{lem:hP}
Let $(M, \pi)$ be a Poisson manifold, and let $\zeta$ be a vector field on $M$. Then $(\pi, \zeta)$ is a homogeneous Poisson structure iff $\mathcal L_\zeta - \mathbb 1 : \Omega^1 (M) \to \Omega^1(M)$ is a derivation of the cotangent algebroid $(T^\ast M)_\pi$.
\end{lemma}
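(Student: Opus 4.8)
The plan is to reduce the claim to a single computation identifying the \emph{defect} of $\delta:=\mathcal L_\zeta-\mathbb 1$ as a bracket derivation with the bracket attached by formula \eqref{eq:brack_pi} to the bivector $\mathcal L_\zeta\pi+\pi$. First I would observe that, for any $\zeta$ and any $\pi$, the operator $\delta$ is automatically a derivation of the $C^\infty(M)$-module $\Omega^1(M)$ with symbol $\zeta$, since $(\mathcal L_\zeta-\mathbb 1)(f\eta)=(\zeta f)\eta+f(\mathcal L_\zeta-\mathbb 1)\eta$. Hence, recalling that a Lie algebroid derivation is a module derivation which additionally derives the bracket, the entire content of the lemma is that $\delta$ derives $[-,-]_\pi$ if and only if $\mathcal L_\zeta\pi=-\pi$.

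Next I would compute, for $\eta,\theta\in\Omega^1(M)$,
\[
D_\delta(\eta,\theta):=\delta[\eta,\theta]_\pi-[\delta\eta,\theta]_\pi-[\eta,\delta\theta]_\pi .
\]
This expression depends linearly on $\delta$, and for $\delta=\mathbb 1$ it equals $-[\eta,\theta]_\pi$; writing $\delta=\mathcal L_\zeta-\mathbb 1$ we therefore get
\[
D_\delta(\eta,\theta)=\bigl(\mathcal L_\zeta[\eta,\theta]_\pi-[\mathcal L_\zeta\eta,\theta]_\pi-[\eta,\mathcal L_\zeta\theta]_\pi\bigr)+[\eta,\theta]_\pi .
\]
The key point is that the parenthesized term equals $[\eta,\theta]_{\mathcal L_\zeta\pi}$, the bracket obtained from \eqref{eq:brack_pi} with $\pi$ replaced by the (not necessarily Poisson) bivector $\mathcal L_\zeta\pi$. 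The cleanest route is naturality: if $\phi_t$ denotes the flow of $\zeta$, then $\phi_t^\ast[\eta,\theta]_\pi=[\phi_t^\ast\eta,\phi_t^\ast\theta]_{\phi_t^\ast\pi}$, and the map $\varpi\mapsto[\eta,\theta]_\varpi$ is $\mathbb R$-linear in the bivector $\varpi$; differentiating at $t=0$ yields the identity. (Alternatively it can be checked directly from \eqref{eq:brack_pi} using $[\mathcal L_\zeta,d]=0$, $[\mathcal L_\zeta,\mathcal L_X]=\mathcal L_{[\zeta,X]}$, and $\mathcal L_\zeta(\pi^\sharp\eta)=(\mathcal L_\zeta\pi)^\sharp\eta+\pi^\sharp\mathcal L_\zeta\eta$.) Consequently $D_\delta(\eta,\theta)=[\eta,\theta]_{\mathcal L_\zeta\pi}+[\eta,\theta]_\pi=[\eta,\theta]_{\mathcal L_\zeta\pi+\pi}$ for all $\eta,\theta$.

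To conclude I would invoke the elementary fact that the bracket $[-,-]_\varpi$ attached to a bivector $\varpi$ by \eqref{eq:brack_pi} vanishes identically iff $\varpi=0$: on exact forms one has $[df,dg]_\varpi=d\bigl(\varpi(df,dg)\bigr)$, so $[-,-]_\varpi\equiv0$ forces every function $\varpi(df,dg)$ to be locally constant, and testing on local coordinates $x^i$ and on $(x^i)^2$ forces all components of $\varpi$ to vanish, hence $\varpi=0$. Applying this with $\varpi=\mathcal L_\zeta\pi+\pi$ gives $D_\delta\equiv0\iff\mathcal L_\zeta\pi+\pi=0\iff\mathcal L_\zeta\pi=[\zeta,\pi]^S=-\pi$, which is precisely homogeneity of $(\pi,\zeta)$. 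The only genuinely delicate step is the naturality identity for the Koszul bracket under $\mathcal L_\zeta$; everything else is bookkeeping. As a consistency check on the ``only if'' direction one may instead use that a Lie algebroid derivation preserves the anchor, so $[\zeta,\pi^\sharp\eta]=\pi^\sharp\bigl((\mathcal L_\zeta-\mathbb 1)\eta\bigr)$, which combined with $\mathcal L_\zeta(\pi^\sharp\eta)=(\mathcal L_\zeta\pi)^\sharp\eta+\pi^\sharp\mathcal L_\zeta\eta$ gives $(\mathcal L_\zeta\pi)^\sharp=-\pi^\sharp$ at once.
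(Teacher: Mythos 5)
Your proof is correct and follows essentially the same route as the paper: the whole argument rests on the Leibniz-type identity $\mathcal L_\zeta[\eta,\theta]_\pi=[\mathcal L_\zeta\eta,\theta]_\pi+[\eta,\mathcal L_\zeta\theta]_\pi+[\eta,\theta]_{\mathcal L_\zeta\pi}$, which is exactly the paper's Equation (\ref{eq:der_bracket}). You merely supply the two details the paper leaves to the reader (the flow/naturality proof of that identity, and the verification that $[-,-]_\varpi\equiv 0$ forces $\varpi=0$), both of which are fine.
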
 

\begin{proof}
It easily follows from the (Leibniz-type) formula
\begin{equation}\label{eq:der_bracket}
\mathcal L_\zeta [ \alpha, \beta]_\pi = [\mathcal L_\zeta \alpha, \beta]_\pi + [\alpha, \mathcal L_\zeta \beta]_\pi + [\alpha, \beta]_{\mathcal L_\zeta \pi},
\end{equation}
for all $\alpha, \beta \in \Omega^1 (M)$. Here, given any bivector $\pi$ (non necessarily Poisson) $[-,-]_{\pi}$ denotes the biderivation of $T^\ast M$ given by the same formula (\ref{eq:brack_pi}) as for Poisson bivectors. Equation (\ref{eq:der_bracket}) can be proved with a straightforward computation.
\end{proof}

\begin{definition}
A \emph{homogeneous symplectic groupoid} is a symplectic groupoid $(\mathcal G, \Omega)$ equipped with a multiplicative vector field $Z$ such that $(\Omega, Z)$ is a homogeneous symplectic structure, i.e.~$\mathcal L_Z \Omega = \Omega$.
\end{definition}

\begin{proposition}\label{prop:hP}
\quad
\begin{enumerate}
\item Let $(\mathcal G \rightrightarrows M, \Omega, Z)$ be a homogeneous 
symplectic groupoid. Then $M$ is equipped with a unique homogeneous Poisson 
structure $(\pi, \zeta)$ such that $(M, \pi)$ differentiates $(\mathcal G, \Omega)$ 
and $\zeta = Z|_M$. The homogeneous Poisson manifold $(M, \pi, \zeta)$ is called
 the differentiation of $(\mathcal G, \Omega, Z)$.
\item Conversely, let $(M, \pi, \zeta)$ be an \emph{integrable} homogeneous Poisson manifold, and let $(\mathcal G, \Omega)$ be the (source-simply connected) integration of $(M, \pi)$. Then $\mathcal G$ is equipped with a unique multiplicative homogeneity vector field $Z$ for $\Omega$, such that $(M, \pi, Z)$ differentiates $(\mathcal G, \Omega, Z)$. The homogeneous symplectic groupoid $(\mathcal G, \Omega, Z)$ is called the integration of $(M, \pi, \zeta)$.
\end{enumerate}
Integration and differentiation establish a one-to-one correspondence between integrable homogeneous Poisson manifolds and source-simply connected homogeneous symplectic groupoids. Shortly, homogeneous Poisson manifolds integrate to homogeneous symplectic groupoids.
\end{proposition}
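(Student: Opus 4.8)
The plan is to leverage the integration theory for ordinary Poisson manifolds (Example~\ref{ex:symplectic_groupoid}), the Lie theory relating multiplicative vector fields on a Lie groupoid to Lie algebroid derivations of its algebroid, and the infinitesimal characterization of homogeneity in Lemma~\ref{lem:hP}. The technical heart will be a single compatibility computation: given \emph{any} multiplicative $2$-form $\Omega$ on a Lie groupoid $\mathcal G \rightrightarrows M$ with algebroid $A$, and \emph{any} multiplicative vector field $Z$ on $\mathcal G$ with associated Lie algebroid derivation $\delta$ (so $\overrightarrow{\delta \alpha} = [Z, \overrightarrow\alpha]$) and symbol $\zeta := \sigma(\delta) = Z|_M$, the Spencer operator $(\mathcal D', \ell')$ of the multiplicative form $\mathcal L_Z \Omega$ is expressed in terms of the Spencer operator $(\mathcal D, \ell)$ of $\Omega$ by
\[
\mathcal D' = \mathcal L_\zeta \circ \mathcal D - \mathcal D \circ \delta, \qquad \ell' = \mathcal L_\zeta \circ \ell - \ell \circ \delta.
\]
I would prove this directly from the defining formulas (\ref{eq:Spencer_groupoid}), using that $Z$, being multiplicative, is tangent to the unit submanifold with $Z|_M = \zeta$ (so $1^\ast \circ \mathcal L_Z = \mathcal L_\zeta \circ 1^\ast$ on forms), together with the Cartan identities $[\mathcal L_{\overrightarrow\alpha}, \mathcal L_Z] = \mathcal L_{[\overrightarrow\alpha, Z]} = - \mathcal L_{\overrightarrow{\delta\alpha}}$ and $[i_{\overrightarrow\alpha}, \mathcal L_Z] = i_{[\overrightarrow\alpha, Z]} = - i_{\overrightarrow{\delta\alpha}}$. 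I expect this Cartan-calculus bookkeeping (in particular the interchange of $1^\ast$ and $\mathcal L_Z$) to be the only genuinely delicate point.

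For part (1), let $(\mathcal G \rightrightarrows M, \Omega, Z)$ be a homogeneous symplectic groupoid. By Example~\ref{ex:symplectic_groupoid}, $M$ carries a unique Poisson structure $\pi$ for which $t$ is Poisson, $A$ is identified with $(T^\ast M)_\pi$ so that $\ell = \mathbb 1$ and $\mathcal D = d$, and we set $\zeta := Z|_M$. Since $\mathcal L_Z \Omega = \Omega$ we have $(\mathcal D', \ell') = (\mathcal D, \ell)$, so the compatibility identity gives $\ell \circ \delta = \mathcal L_\zeta \circ \ell - \ell$, i.e.~$\delta = \mathcal L_\zeta - \mathbb 1$ under $\ell = \mathbb 1$ (the $\mathcal D$-equation is then automatic because $d$ commutes with $\mathcal L_\zeta$). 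As $\delta$ is a Lie algebroid derivation of $(T^\ast M)_\pi$, Lemma~\ref{lem:hP} tells us $(\pi, \zeta)$ is a homogeneous Poisson structure; it is manifestly the unique one with the stated properties.

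For part (2), let $(M, \pi, \zeta)$ be integrable and let $(\mathcal G, \Omega)$ be the source-simply connected symplectic groupoid of $(T^\ast M)_\pi$. By Lemma~\ref{lem:hP}, $\delta := \mathcal L_\zeta - \mathbb 1$ is a Lie algebroid derivation of $(T^\ast M)_\pi$, hence integrates to a unique multiplicative vector field $Z$ on $\mathcal G$ with $\overrightarrow{\delta\alpha} = [Z, \overrightarrow\alpha]$, and $Z|_M = \sigma(\delta) = \zeta$. It remains to check $\mathcal L_Z \Omega = \Omega$. The form $\Omega' := \mathcal L_Z \Omega - \Omega$ is multiplicative (since $\mathcal L_Z = d\, i_Z + i_Z\, d$ preserves multiplicative forms when $Z$ is multiplicative), and by the compatibility identity, additivity of Spencer operators, and $\delta = \mathcal L_\zeta - \mathbb 1$, its Spencer operator is
\[
(\mathcal L_\zeta \circ d - d \circ \delta - d,\ \mathcal L_\zeta - \delta - \mathbb 1) = (0, 0).
\]
Since $\mathcal G$ is source-simply connected, the uniqueness part of the Crainic--Salazar--Struchiner theorem forces $\Omega' = 0$, i.e.~$\mathcal L_Z \Omega = \Omega$. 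Thus $(\mathcal G, \Omega, Z)$ is a homogeneous symplectic groupoid differentiating $(M, \pi, \zeta)$, and $Z$ is unique by the Lie theory of multiplicative vector fields together with part (1). Finally, the claimed bijection follows by combining these two constructions with the analogous bijection for Poisson manifolds and symplectic groupoids (Example~\ref{ex:symplectic_groupoid}), since at every stage the extra datum ($\zeta$ resp.\ $Z$) is determined by, and compatible with, the underlying Poisson/symplectic data.
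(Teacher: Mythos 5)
Your proposal is correct and follows essentially the same route as the paper: both reduce the problem, via the Spencer operator of the closed multiplicative form $\mathcal L_Z\Omega$ and the commutator identities $1^\ast\circ\mathcal L_Z=\mathcal L_\zeta\circ 1^\ast$ and $i_{[\overrightarrow\alpha,Z]}=-i_{\overrightarrow{\delta\alpha}}$, to the statement that $\mathcal L_Z\Omega=\Omega$ iff $\delta=\mathcal L_\zeta-\mathbb 1$, and then invoke Lemma \ref{lem:hP} together with the Lie theory of multiplicative vector fields. Your only (harmless) variation is packaging the key computation as a general formula for the Spencer operator of $\mathcal L_Z\Omega$ rather than doing it directly with $\ell=\mathbb 1$.
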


\begin{proof}
Begin with a symplectic groupoid $(\mathcal G \rightrightarrows M, \Omega)$ and a multiplicative vector field $Z$ on it. Then 1) $M$ possesses a Poisson structure $\pi$, 2) $Z$ induce a derivation $\delta$ of the cotangent algebroid $(T^\ast M)_\pi$, 3) the symbol of $\delta$ is $\zeta = Z|_M$. We will show that $(\Omega, Z)$ is a homogeneous symplectic structure iff $\delta = \mathcal L_{\zeta} - \mathbb 1$. The theorem will then follow immediately. So, notice that $\mathcal L_Z \Omega$ is multiplicative and compute its Spencer operator $(\mathcal D, \ell)$. As $\mathcal L_Z \Omega$ is closed, it is enough to take care of the second entry only. So let $\alpha \in \Omega^1 (M)$ and compute
\begin{equation}\label{eq:3}
\begin{aligned}
\ell(\alpha) & = 1^\ast (i_{\overrightarrow \alpha}\mathcal L_Z \Omega) = 1^\ast (i_{[\overrightarrow \alpha, Z]} \Omega) + \mathcal L_\zeta 1^\ast (i_{\overrightarrow \alpha}\Omega) \\ & = - 1^\ast (i_{\overrightarrow{\delta\alpha}}\Omega) + \mathcal L_\zeta \alpha = \mathcal L_\zeta \alpha - \delta \alpha
\end{aligned}
\end{equation}
which agrees with $\mathbb 1 \alpha = \alpha$ for all $\alpha$ iff $\delta = \mathcal L_{\zeta} - \mathbb 1$ as claimed.
\end{proof}

    \subsection{Homogeneous Poisson-Nijenhuis}

\begin{definition}
A \emph{homogeneous Poisson-Nijenhuis manifold} is a manifold equipped with a \emph{homogeneous Poisson-Nijenhuis structure}, i.e.~a triple $(\pi, N, \zeta)$ such that, $(\pi, N)$ is a Poisson-Nijenhuis structure, $(\pi, \zeta)$ is a homogeneous Poisson structure, and, additionally, $\mathcal L_\zeta N = 0$. If $\pi$ is non-degenerate we speak about \emph{homogeneous symplectic-Nijenhuis manifolds} and \emph{homogeneous symplectic-Nijenhuis structures}. A \emph{homogeneous symplectic-Nijenhuis groupoid} is a groupoid equipped with a \emph{multiplicative homogeneous symplectic-Nijenhuis structure}, i.e.~a homogeneous symplectic-Nijenhuis structure such that all components are multiplicative.
\end{definition}

The following proposition refines Propositions \ref{prop:PN} and \ref{prop:hP} for homogeneous Poisson-Nijenhuis manifolds.

\begin{proposition}\label{prop:hPN}
\quad
\begin{enumerate}
\item Let $(\mathcal G \rightrightarrows M, \Omega, \mathcal N, Z)$ be a homogeneous symplectic groupoid. Then $M$ is equipped with a unique homogeneous symplectic-Nijenhuis structure $(\pi, N, \zeta)$ such that $(M, \pi)$ differentiates $\mathcal G$, and additionally $N = \mathcal N$ and $\zeta = Z|_M$. The homogeneous symplectic-Nijenhuis manifold $(M, \pi, N, \zeta)$ is called the differentiation of $(\mathcal G, \Omega, \mathcal N, Z)$.
\item Conversely, let $(M, \pi, N, \zeta)$ be an \emph{integrable} homogeneous Poisson-Nijenhuis manifold, i.e.~$(M, \pi, N)$ is integrable, and let $(\mathcal G, \Omega, \mathcal N)$ be its integration. Then $\mathcal G$ is equipped with a  unique  multiplicative homogeneity vector field $Z$ for $(\Omega, \mathcal N)$ such that $(M, \pi, N, \zeta)$ differentiates $(\mathcal G, \Omega, \mathcal N, Z)$. The homogeneous Poisson-Nijenhuis groupoid $(\mathcal G, \Omega, \mathcal N, Z)$ is called the integration of $(M, \pi, N, \zeta)$.
\end{enumerate}
Differentiation and Integration establish a one-to-one correspondence between integrable homogeneous Poisson-Nijenhuis manifolds and source-simply connected homogeneous symplectic-Nijenhuis groupoids. Shortly, homogeneous Poisson-Nijenhuis manifolds integrate to homogeneous symplectic-Nijenhuis groupoids.
\end{proposition}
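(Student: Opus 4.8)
The plan is to bootstrap Proposition \ref{prop:hPN} from Propositions \ref{prop:PN} and \ref{prop:hP}, noting that both refer to the \emph{same} source-simply connected integration $\mathcal{G}$ of the cotangent algebroid $(T^\ast M)_\pi$ and to the same multiplicative symplectic form $\Omega$; the only genuinely new content is the interaction of the Nijenhuis datum with the homogeneity datum. For the differentiation direction, start with a homogeneous symplectic-Nijenhuis groupoid $(\mathcal{G} \rightrightarrows M, \Omega, \mathcal{N}, Z)$. Proposition \ref{prop:PN} applied to $(\mathcal{G}, \Omega, \mathcal{N})$ produces a Poisson-Nijenhuis structure $(\pi, N)$ on $M$ with $t$ a Poisson map and $N = \mathcal{N}|_M$, and Proposition \ref{prop:hP} applied to $(\mathcal{G}, \Omega, Z)$ produces a homogeneous Poisson structure $(\pi, \zeta)$ with $\zeta = Z|_M$; the two Poisson structures agree, being characterized by the same property. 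It remains to check $\mathcal{L}_\zeta N = 0$: since $\mathcal{N}$ and $Z$ are multiplicative they are tangent to $M$ (as in the proof of Proposition \ref{prop:PN}) and restrict to $N$ and $\zeta$, and as the flow of $Z$ preserves $M$, restricting the identity $\mathcal{L}_Z \mathcal{N} = 0$ to $M$ gives $\mathcal{L}_\zeta N = \mathcal{L}_{Z|_M}(\mathcal{N}|_M) = 0$, so $(M, \pi, N, \zeta)$ is homogeneous Poisson-Nijenhuis.

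For the integration direction, let $(M, \pi, N, \zeta)$ be an integrable homogeneous Poisson-Nijenhuis manifold. Proposition \ref{prop:PN} integrates $(M, \pi, N)$ to a symplectic-Nijenhuis groupoid $(\mathcal{G}, \Omega, \mathcal{N})$, with $\mathcal{G}$ the source-simply connected integration of $(T^\ast M)_\pi$, $\Omega$ the multiplicative symplectic form for which $t$ is Poisson, and $\mathcal{N} = \Omega^\sharp \circ (\Omega_{\mathcal{N}})_\flat$, where $\Omega_{\mathcal{N}}$ is the closed multiplicative $2$-form integrating the Spencer operator $(d \circ N^\ast, N^\ast)$. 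Since $(M, \pi, \zeta)$ is homogeneous Poisson, Proposition \ref{prop:hP} equips the \emph{same} $\mathcal{G}$ with a unique multiplicative homogeneity vector field $Z$ for $\Omega$, namely the integration of the Lie algebroid derivation $\delta = \mathcal{L}_\zeta - \mathbb{1}$ of $(T^\ast M)_\pi$, so that $\mathcal{L}_Z \Omega = \Omega$. What must be shown is that $(\Omega, \mathcal{N}, Z)$ is a multiplicative homogeneous symplectic-Nijenhuis structure, i.e.\ $\mathcal{L}_Z \mathcal{N} = 0$. Differentiating $\mathcal{N} = \Omega^\sharp \circ (\Omega_{\mathcal{N}})_\flat$ along $Z$ and using $\mathcal{L}_Z \Omega = \Omega$ (hence $\mathcal{L}_Z \Omega^\sharp = -\Omega^\sharp$ at the level of the induced bundle maps) one obtains $\mathcal{L}_Z \mathcal{N} = \Omega^\sharp \circ (\mathcal{L}_Z \Omega_{\mathcal{N}} - \Omega_{\mathcal{N}})_\flat$, which reduces the claim to the identity $\mathcal{L}_Z \Omega_{\mathcal{N}} = \Omega_{\mathcal{N}}$.

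This identity is where the real work lies, and it is settled by Spencer-operator bookkeeping. Both $\Omega_{\mathcal{N}}$ and $\mathcal{L}_Z \Omega_{\mathcal{N}}$ are multiplicative (the Lie derivative of a multiplicative form along a multiplicative vector field is multiplicative, as already used in the proof of Proposition \ref{prop:hP}) and closed (since $d$ commutes with $\mathcal{L}_Z$); as $\mathcal{G}$ is source-simply connected they coincide as soon as their Spencer operators do, and a \emph{closed} multiplicative $2$-form has Spencer operator $(d \circ \ell, \ell)$, hence is determined by $\ell : A \to T^\ast M$. For $\Omega_{\mathcal{N}}$ this $\ell$ is $\ell_{\mathcal{N}}$, which under $A \simeq (T^\ast M)_\pi$ is $N^\ast$; for $\mathcal{L}_Z \Omega_{\mathcal{N}}$ the computation underlying \eqref{eq:3}, carried out with $\Omega_{\mathcal{N}}$ in place of $\Omega$, gives for $\alpha \in \Omega^1(M)$
\[
\ell_{\mathcal{L}_Z \Omega_{\mathcal{N}}}(\alpha) = \mathcal{L}_\zeta\big(\ell_{\mathcal{N}}(\alpha)\big) - \ell_{\mathcal{N}}(\delta\alpha) = \mathcal{L}_\zeta(N^\ast \alpha) - N^\ast(\mathcal{L}_\zeta \alpha - \alpha) = N^\ast \alpha + (\mathcal{L}_\zeta N)^\ast \alpha,
\]
using $\delta = \mathcal{L}_\zeta - \mathbb{1}$ and $\mathcal{L}_\zeta(N^\ast \alpha) - N^\ast \mathcal{L}_\zeta \alpha = (\mathcal{L}_\zeta N)^\ast \alpha$. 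Since $\mathcal{L}_\zeta N = 0$ by hypothesis, $\ell_{\mathcal{L}_Z \Omega_{\mathcal{N}}} = N^\ast = \ell_{\mathcal{N}}$, whence $\mathcal{L}_Z \Omega_{\mathcal{N}} = \Omega_{\mathcal{N}}$ and therefore $\mathcal{L}_Z \mathcal{N} = 0$, so $(\mathcal{G}, \Omega, \mathcal{N}, Z)$ is a homogeneous symplectic-Nijenhuis groupoid. Finally, that differentiation and integration are mutually inverse is immediate from the corresponding statements in Propositions \ref{prop:PN} and \ref{prop:hP} together with the Lie-algebroid-derivation/multiplicative-vector-field correspondence, since here both maps are merely the superposition of the ones appearing there; the only delicate point throughout is exactly the Spencer-operator match between $\mathcal{L}_Z \mathcal{N} = 0$ and $\mathcal{L}_\zeta N = 0$.
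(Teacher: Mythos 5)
Your proposal is correct and follows essentially the same route as the paper: both reduce the only new point to the equivalence of $\mathcal L_Z\mathcal N=0$ with $\mathcal L_Z\Omega_{\mathcal N}=\Omega_{\mathcal N}$ and then settle the latter by computing the Spencer operator of $\mathcal L_Z\Omega_{\mathcal N}$ exactly as in the computation (\ref{eq:3}), arriving at the same identity $\ell(\alpha)=N^\ast\alpha+(\mathcal L_\zeta N)^\ast\alpha$. The only cosmetic differences are that you spell out the step $\mathcal L_Z\mathcal N=\Omega^\sharp\circ(\mathcal L_Z\Omega_{\mathcal N}-\Omega_{\mathcal N})_\flat$ which the paper leaves as ``easy to check,'' and that you handle the differentiation direction by restricting $\mathcal L_Z\mathcal N=0$ to $M$ rather than reading it off the same ``iff'' in the Spencer computation.
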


\begin{proof}
We already now that Poisson-Nijenhuis manifolds integrate to symplectic-Nijenhuis groupoids, and that homogeneous Poisson manifolds integrate to homogeneous symplectic groupoids. So begin with a symplectic-Nijenhuis groupoid $(\mathcal G \rightrightarrows M, \Omega)$ equipped with 1) a multiplicative symplectic-Nijenhuis structure $(\Omega, N)$, and 2) a homogeneous symplectic structure $(\Omega, Z)$. Denote by $(\pi, N)$ and by $(\pi, \zeta)$ the induced Poisson-Nijenhuis and homogeneous Poisson structures on $M$. It remains to check that $\mathcal L_\zeta N = 0$ iff $\mathcal L_Z \mathcal N = 0$. It is easy to check that  $\mathcal L_Z \mathcal N = 0$ is equivalent to $\mathcal L_Z \Omega_{\mathcal N} = \Omega_{\mathcal N}$. So compute the Spencer operator $(\mathcal D, \ell)$ of $\mathcal L_Z \Omega_{\mathcal N}$. As $\mathcal L_Z \Omega_{\mathcal N}$ is closed, it is enough to take care of the second entry only. Recall that the Spencer operator of $\Omega_{\mathcal N}$ is $(d \circ N^\ast, N^\ast)$, and the Lie algebroid derivation differentiating $Z$ is $\mathcal L_\zeta - \mathbb 1$. Then, a similar computation as (\ref{eq:3}) shows that
\[
\ell (\alpha) = 1^\ast (i_{\overrightarrow \alpha} \mathcal L_Z \Omega_{\mathcal N}) = - N^\ast (\mathcal L_\zeta \alpha - \alpha) + \mathcal L_\zeta N^\ast \alpha = N^\ast \alpha + (\mathcal L_\zeta N)^\ast \alpha
\]
 for all $\alpha \in \Omega^1 (M)$. So $\ell = N^\ast$ iff $\mathcal L_\zeta N = 0$ as claimed.
\end{proof}

\subsection{Homogeneous holomorphic Poisson}

Recall that a complex vector field on a Lie groupoid is multiplicative if so are its real and its imaginary part. 

\begin{definition}
A \emph{homogeneous holomorphic Poisson manifold} is a holomorphic Poisson manifold $(X, \Pi)$ equipped with a \emph{holomorphic vector field $Z$} such that $[Z, \Pi] = - \Pi$. If $\Pi$ is non-degenerate, we speak about a \emph{homogeneous complex symplectic manifold}. A \emph{homogeneous complex symplectic groupoid} is a complex symplectic groupoid $(\mathcal G, \Omega)$ equipped with a \emph{multiplicative holomorphic vector field} $\mathcal Z$ such that $\mathcal L_{\mathcal Z} \Omega = \Omega$.
\end{definition}

\begin{remark}
A homogeneous holomorphic Poisson manifold $(M, j, \Pi,  Z)$ is equivalent to a homogeneous Poisson-Nijenhuis manifold $(M, \pi, N, \zeta)$ such that the Nijenhuis tensor $N$ is an almost complex (hence complex) structure. Under this equivalence $(j, \Pi, Z)$ corresponds to $(\operatorname{Re} \Pi, j, 2\operatorname{Re} Z)$. Under the inverse equivalence $(\pi, N, \zeta)$ corresponds to $(N, \pi - i \pi_N, (\zeta -i j \zeta)/2)$.
\end{remark}

The following proposition refines Proposition \ref{prop:holP} for homogeneous holomorphic Poisson manifolds.

\begin{proposition}\label{prop:hcsg}
\quad
\begin{enumerate}
\item Let $(\mathcal G \rightrightarrows M, j_{\mathcal G}, \Omega, \mathcal Z)$ be a homogeneous complex symplectic groupoid. Then $M$ is equipped with a unique complex structure $j$ and a unique homogeneous holomorphic Poisson structure $(\Pi, Z)$ such that $(M, j, \Pi)$ differentiates $\mathcal G, j_{\mathcal G}, \Pi)$ and $Z = \mathcal Z|_M$. The homogeneous holomorphic Poisson manifold $(M, j, \Pi, Z)$ is called the differentiation of $(\mathcal G, j_{\mathcal G}, \Omega, \mathcal Z)$.
\item Conversely, let $(M, j, \Pi, Z)$ be an \emph{integrable} homogeneous holomorphic Poisson manifold, i.e.~$(M, j, \Pi)$ is integrable, and let $(\mathcal G, j_{\mathcal G}, \Omega)$ be the (source-simply connected) integration of the latter. Then $\mathcal G$ is equipped with a unique holomorphic homogeneity vector field for $\Omega$, such that $(M, j, \Pi, Z)$ dfferentiates $(\mathcal G, j_{\mathcal G}, \Omega, \mathcal Z)$. The homogeneous complex symplectic groupoid $(\mathcal G, j_{\mathcal G}, \Omega, \mathcal Z)$ is called the integration of $(M, j, \Pi, Z)$. 
\end{enumerate}
Integration and differentiation establish a one-to-one correspondence between integrable homogeneous holomorphic Poisson manifolds and source-simply connected homogeneous complex symplectic groupoids. Shortly, homogeneous holomorphic Poisson manifolds integrate to homogeneous complex symplectic groupoids.
\end{proposition}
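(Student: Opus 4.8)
The plan is to deduce Proposition~\ref{prop:hcsg} from Proposition~\ref{prop:hPN} together with Remark~\ref{rem:complex}, in exactly the way that Proposition~\ref{prop:holP} was deduced from Proposition~\ref{prop:PN} and Remark~\ref{rem:complex}. The key input is the dictionary recalled in the Remark immediately preceding the statement: a homogeneous holomorphic Poisson manifold $(M,j,\Pi,Z)$ is the same thing as a homogeneous Poisson--Nijenhuis manifold $(M,\pi,N,\zeta)$ whose Nijenhuis tensor $N$ is an (integrable) almost complex structure, with $(j,\Pi,Z)$ corresponding to $(\operatorname{Re}\Pi,j,2\operatorname{Re}Z)$ and, conversely, $(\pi,N,\zeta)$ corresponding to $(N,\pi-i\pi_N,\tfrac12(\zeta-ij\zeta))$; likewise a homogeneous complex symplectic groupoid is the same thing as a homogeneous symplectic--Nijenhuis groupoid whose (multiplicative) Nijenhuis tensor is a complex structure. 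So everything reduces to checking that, under the correspondence of Proposition~\ref{prop:hPN}, the sub-class ``$N$ is a complex structure'' matches the sub-class ``$\mathcal N$ is a complex structure''.

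For the differentiation direction I would take a homogeneous complex symplectic groupoid $(\mathcal G\rightrightarrows M,j_{\mathcal G},\Omega,\mathcal Z)$, set $\omega=\operatorname{Re}\Omega$, $\mathcal N=j_{\mathcal G}$, $Z=2\operatorname{Re}\mathcal Z$, and verify that $(\mathcal G,\omega,\mathcal N,Z)$ is a homogeneous symplectic--Nijenhuis groupoid: multiplicativity of the four pieces is inherited from that of $j_{\mathcal G}$, $\Omega$ and $\mathcal Z$; $\omega$ is closed and non-degenerate because $d\Omega=0$, $\overline{\partial}\Omega=0$ and $\Omega$ is non-degenerate; and $\mathcal L_Z\omega=\omega$, $\mathcal L_Z\mathcal N=0$ follow by taking real parts of $\mathcal L_{\mathcal Z}\Omega=\Omega$ and using that $\mathcal L_{\overline{\mathcal Z}}$ annihilates $\Omega$. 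Proposition~\ref{prop:hPN} then yields a homogeneous Poisson--Nijenhuis manifold $(M,\pi,N,\zeta)$ with $N=\mathcal N|_M=j$ and $\zeta=Z|_M$; since $N=j$ is already a complex structure, the dictionary repackages it as a homogeneous holomorphic Poisson manifold $(M,j,\Pi,Z)$ with $Z=\mathcal Z|_M$, which is the desired differentiation.

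For the integration direction I would start from an integrable homogeneous holomorphic Poisson manifold $(M,j,\Pi,Z)$, convert it by the dictionary into an integrable homogeneous Poisson--Nijenhuis manifold $(M,\pi,N,\zeta)$ with $N=j$, and integrate it via Proposition~\ref{prop:hPN} to a source-simply connected homogeneous symplectic--Nijenhuis groupoid $(\mathcal G,\omega,\mathcal N,Z)$ (whose underlying Lie groupoid coincides, after the harmless rescaling identification of cotangent algebroids used in Proposition~\ref{prop:holP}, with the integration appearing there). I would then invoke Remark~\ref{rem:complex}: since $N=\mathcal N|_M$ is a complex structure, so is $\mathcal N$; writing $j_{\mathcal G}:=\mathcal N$, the multiplicative, integrable almost complex structure $j_{\mathcal G}$ makes $\mathcal G\rightrightarrows M$ a complex Lie groupoid, and $\Omega:=\omega-i\omega_{\mathcal N}$ becomes a multiplicative $(2,0)$-form which is $d$-closed (hence $\partial$- and $\overline{\partial}$-closed) and non-degenerate, i.e.\ a multiplicative complex symplectic form. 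Finally $\mathcal L_Z\mathcal N=0$ is precisely $\mathcal L_Zj_{\mathcal G}=0$, so $Z$ is a real holomorphic vector field, $\mathcal Z:=\tfrac12(Z-ij_{\mathcal G}Z)$ is a multiplicative holomorphic vector field, and $\mathcal L_{\mathcal Z}\Omega=\mathcal L_Z\Omega=\mathcal L_Z\omega-i\mathcal L_Z\omega_{\mathcal N}=\omega-i\omega_{\mathcal N}=\Omega$, since $\mathcal L_Z\omega=\omega$ together with $\mathcal L_Z\mathcal N=0$ forces $\mathcal L_Z\omega_{\mathcal N}=\omega_{\mathcal N}$. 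Thus $(\mathcal G,j_{\mathcal G},\Omega,\mathcal Z)$ is a homogeneous complex symplectic groupoid integrating $(M,j,\Pi,Z)$, and the one-to-one correspondence and all uniqueness assertions follow from those in Propositions~\ref{prop:hPN} and~\ref{prop:holP}.

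I expect the whole argument to be essentially bookkeeping. The only point requiring genuine care is the passage between the single homogeneity condition $\mathcal L_{\mathcal Z}\Omega=\Omega$ of the complex picture and the pair $\mathcal L_Z\omega=\omega$, $\mathcal L_Z\mathcal N=0$ of the real picture, together with the verification that the multiplicative vector field produced by Proposition~\ref{prop:hPN} is genuinely \emph{holomorphic} and not merely multiplicative. Both are disposed of by the identity $\mathcal L_Z\omega_{\mathcal N}=\omega_{\mathcal N}$ above (which also appears inside the proof of Proposition~\ref{prop:hPN}) and by the fact that $\mathcal L_{\overline{\mathcal Z}}$ annihilates holomorphic forms, so there is no serious obstacle: the proposition is obtained by assembling results already established.
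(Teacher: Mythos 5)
Your proposal is correct and follows essentially the same route as the paper: the paper's proof is the one-line observation that the statement "easily follows from Propositions \ref{prop:holP} and \ref{prop:hPN}", and your argument simply fleshes this out by combining Proposition \ref{prop:hPN} with the dictionary of Remark \ref{rem:complex} (which is exactly the mechanism behind Proposition \ref{prop:holP}). The details you supply — matching $\mathcal L_{\mathcal Z}\Omega=\Omega$ with the pair $\mathcal L_Z\omega=\omega$, $\mathcal L_Z\mathcal N=0$ via $i_{\overline{\mathcal Z}}\Omega=0$ for the $(2,0)$-form $\Omega$ — are accurate and are precisely what the paper leaves implicit.
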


\begin{proof}
The proof easily follows from Propositions \ref{prop:holP} and \ref{prop:hPN}. 
\end{proof}
    
\section{Integration of Jacobi structures}\label{Sec:Cont_Group}
    \subsection{Integration of Jacobi manifolds II}
    Let $(M, L, \{-,-\} \equiv J)$ be a Jacobi manifold, and let $(\widetilde M, \pi, Z)$ be its Poissonization. Recall that this means that $\widetilde M = L^\ast \smallsetminus 0$, $Z$ is the Euler vector field on it, and $\pi = \widetilde J$ is the homogenization of the skew-symmetric Atiyah $(0,2)$-tensor $J$. In particular $(\pi, Z)$ is a homogeneous Poisson structure. Denote by $p : \widetilde M \to M$ the projection. It is a principal $\mathbb R^\times$-bundle. Finally, recall the pull-back diagrams
\begin{equation}\label{diag}
\begin{array}{c}
\xymatrix{ \mathbb R_{\widetilde M} \ar[r] \ar[d] &  \widetilde M  \ar[d]^-p \\
L   \ar[r] & M }
\end{array}, \quad
\begin{array}{c}
\xymatrix{  T^\ast {\widetilde M} \ar[r] \ar[d]_-{p_{T^\ast}} & \widetilde M \ar[d]^-p \\
  \mathfrak J^1 L \ar[r] & M }
\end{array}.
\end{equation}
from Section \ref{Sec:hom_Atiyah}.

\begin{lemma}\label{lem:Poiss}
Map $\mathbb R_{\widetilde M} \to L$ is a Jacobi map, and $T^\ast \widetilde M \to \mathfrak J^1 L$ is a map of Lie algebroids $(T^\ast \widetilde M)_\pi \to (\mathfrak J^1 L)_J$. More precisely, the jet algebroid $(\mathfrak J^1 L)_J$ acts canonically on the fibration $\widetilde M$, and $(T^\ast \widetilde M)_\pi$ is the corresponding action algebroid. In particular, diagram (\ref{diag}) is a principal $\mathbb R^\times$-bundle in the category of Lie algebroids, i.e.~$\mathbb R^\times$ acts on $(T \widetilde M)_\pi$ by Lie algebroid automorphisms.
\end{lemma}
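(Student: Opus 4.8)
The plan is to deduce everything from the homogenization dictionary of Section~\ref{Sec:hom_trick}. Under the canonical isomorphisms $T\widetilde M\simeq p^\ast DL$ and $T^\ast\widetilde M\simeq p^\ast\mathfrak J^1 L$ of Theorem~\ref{theor:homo}, homogeneous tensors are exactly the pull-back sections: $\widetilde\lambda=p_L^\ast\lambda$ for $\lambda\in\Gamma(L)$, the $1$-form $\widetilde\psi$ is the pull-back of $\psi\in\Gamma(\mathfrak J^1 L)$, and, combining diagram~(\ref{diag}) with Example~\ref{ex:ext_diff}, $\widetilde{\mathfrak j^1\lambda}=d\widetilde\lambda$. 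With this in hand the Jacobi-map assertion is immediate: $\mathbb R_{\widetilde M}$ carries the Jacobi bracket $\{f,g\}=\pi(df,dg)$ determined by the Poisson structure $\pi=\widetilde J$, and for $\lambda,\mu\in\Gamma(L)$
\[
\{p_L^\ast\lambda,p_L^\ast\mu\}=\pi(d\widetilde\lambda,d\widetilde\mu)=\widetilde J(\widetilde{\mathfrak j^1\lambda},\widetilde{\mathfrak j^1\mu})=\widetilde{\{\lambda,\mu\}}=p_L^\ast\{\lambda,\mu\},
\]
the central equality being the defining property~(\ref{eq:T_tilde}) of the homogenization of the $(0,2)$-tensor $J$.

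For the Lie-algebroid assertion I would first isolate two facts from the dictionary: (i) $\widetilde{J^\sharp\psi}=\pi^\sharp\widetilde\psi$, because $J^\sharp$ is a contraction and homogenization commutes with contractions (alternatively, evaluate both sides on homogeneous functions); and (ii) homogenization intertwines the Lie derivative of Atiyah $1$-forms along a derivation $\Delta$ with the Lie derivative of $1$-forms along $\widetilde\Delta$, which follows from Cartan's formula together with the compatibility of homogenization with $d_D\leftrightarrow d$ and with contraction. Expanding the jet-algebroid bracket $[\psi,\chi]_J=\mathcal L_{J^\sharp\psi}\chi-\mathcal L_{J^\sharp\chi}\psi-d_D\langle J,\psi\wedge\chi\rangle$ and homogenizing term by term then gives the key identity
\[
\widetilde{[\psi,\chi]_J}=\mathcal L_{\pi^\sharp\widetilde\psi}\widetilde\chi-\mathcal L_{\pi^\sharp\widetilde\chi}\widetilde\psi-d\langle\pi,\widetilde\psi\wedge\widetilde\chi\rangle=[\widetilde\psi,\widetilde\chi]_\pi ,
\]
where the last equality is the definition~(\ref{eq:brack_pi}) of $[-,-]_\pi$. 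Anchor compatibility $Tp\circ\pi^\sharp=\rho_J\circ p_{T^\ast}$ holds because $\rho_J=\sigma\circ J^\sharp$ and the homogenization of a derivation is $p$-related to its symbol; since the covectors $d_\epsilon\widetilde\lambda$ span, this is pointwise.

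Next I would check that $\psi\mapsto\widehat\psi:=\widetilde{J^\sharp\psi}=\pi^\sharp\widetilde\psi$ is a Lie-algebroid action of $(\mathfrak J^1 L)_J$ on $p:\widetilde M\to M$: it is $C^\infty(M)$-linear (because $\widetilde{f\Delta}=(p^\ast f)\widetilde\Delta$), valued in $p$-projectable fields with projection $\rho_J(\psi)$, and a Lie-algebra morphism since $[\pi^\sharp\widetilde\psi,\pi^\sharp\widetilde\chi]=\pi^\sharp[\widetilde\psi,\widetilde\chi]_\pi=\pi^\sharp\widetilde{[\psi,\chi]_J}=\widehat{[\psi,\chi]_J}$. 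The anchor and bracket of the resulting action algebroid $p^\ast\mathfrak J^1 L$ are determined on pull-back sections by $\widehat\psi$ and $\widetilde{[\psi,\chi]_J}$, so comparing with $\pi^\sharp$ and $[\widetilde\psi,\widetilde\chi]_\pi$ on this $C^\infty(\widetilde M)$-spanning family and extending by the Leibniz rule identifies the action algebroid with $(T^\ast\widetilde M)_\pi$ and $p_{T^\ast}$ with its canonical projection, hence in particular with a Lie-algebroid morphism. For the final statement, $p:\widetilde M\to M$ is a principal $\mathbb R^\times$-bundle by construction and $p_{T^\ast}=p_{1,0}$ is one by the discussion around~(\ref{eq:p_l,m}); moreover each $\widehat\psi$ is a homogeneous vector field, hence $\mathbb R^\times$-invariant (a $(0,1)$-tensor with $m=1$ in Proposition~\ref{prop:homo}), so the $(\mathfrak J^1 L)_J$-action on $\widetilde M$ commutes with the $\mathbb R^\times$-action and therefore lifts to an action of $\mathbb R^\times$ by Lie-algebroid automorphisms of $p^\ast\mathfrak J^1 L=(T^\ast\widetilde M)_\pi$, which is exactly the principal $\mathbb R^\times$-structure on $p_{T^\ast}$. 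Infinitesimally this is Lemma~\ref{lem:hP}, since the derivation $\mathcal L_Z-\mathbb 1$ attached to the Euler vector field of $p_{1,0}$ is a Lie-algebroid derivation of $(T^\ast\widetilde M)_\pi$ precisely because $(\pi,Z)$ is homogeneous Poisson.

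The only real work I expect is the bookkeeping behind the key identity $\widetilde{[\psi,\chi]_J}=[\widetilde\psi,\widetilde\chi]_\pi$ — its three summands are matched by three distinct compatibilities of homogenization — together with the care needed because $p_{T^\ast}$ covers the submersion $p$ rather than the identity, so the reduction to pull-back sections and the Leibniz extension must be carried out explicitly rather than merely invoked; conceptually, nothing here leaves Section~\ref{Sec:hom_trick}.
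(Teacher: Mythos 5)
Your proposal is correct and follows essentially the same route as the paper: both arguments reduce everything to the homogenization dictionary, prove the Jacobi-map claim via $\{\widetilde\lambda,\widetilde\mu\}_{\widetilde J}=\widetilde{\{\lambda,\mu\}}$, and identify $(T^\ast\widetilde M)_\pi$ with the action algebroid of $\psi\mapsto\pi^\sharp\widetilde\psi=\widetilde{J^\sharp\psi}$. The only difference is cosmetic: the paper checks the bracket compatibility just on the generating family $\mathfrak j^1\lambda\leftrightarrow d\widetilde\lambda$ using $[df,dg]_\pi=d\{f,g\}_\pi$ and $[\mathfrak j^1\lambda,\mathfrak j^1\mu]_J=\mathfrak j^1\{\lambda,\mu\}$, whereas you verify $\widetilde{[\psi,\chi]_J}=[\widetilde\psi,\widetilde\chi]_\pi$ term by term for arbitrary sections, and you spell out the $\mathbb R^\times$-equivariance that the paper leaves implicit.
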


\begin{proof}
First of all notice that a Poisson manifold $(\widetilde M, \pi)$ can be seen as a Jacobi manifold with Jacobi bundle $\mathbb R_{\widetilde M}$ and Jacobi bracket given by the Poisson bracket $\{-,-\}_\pi$ corresponding to $\pi$. The first claim now follows from identity
\begin{equation}\label{eq:hom_bracket}
\{ \widetilde \lambda, \widetilde \mu \}_{\widetilde J} = \widetilde{\{\lambda, \mu \}}
\end{equation}
for all $\lambda, \mu \in \Gamma (L)$. The latter is just a special case of (\ref{eq:T_tilde}). The fact that $p_{T^\ast}$ is a Lie algebroid map follows from (\ref{eq:hom_bracket}), surjectivity of $p_{T^\ast}$,  identity $\widetilde{\mathfrak j^1 \lambda} = d \widetilde \lambda$, for all $\lambda \in \Gamma (L)$, and the fact that the Lie brackets $[-,-]_\pi$, and $[-,-]_J$, on $\Omega^1 (\widetilde M)$ and $\Gamma (\mathfrak J^1 L)$ respectively, are completely determines by properties
\[
[df, dg]_\pi = d\{f,g\}_\pi, \quad [\mathfrak j^1 \lambda, \mathfrak j^1 \mu]_J = \mathfrak j^1 \{\lambda, \mu\},
\]
for $f,g \in C^\infty (\widetilde M)$, and $\lambda, \mu \in \Gamma (L)$. For the second part of the statement, we argue as follows. As $p_{T^\ast}$ is a regular vector bundle map, then $T^\ast \widetilde M$ is actually (isomorphic to) the action Lie algebroid corresponding to an action of $(\mathfrak J^1 L)_J$ on the fibration $\widetilde M$. The action map $\Gamma ((\mathfrak J^1 L)_L) \to \mathfrak X (\widetilde M)$ is given by
\[
\psi \mapsto \pi^\sharp \widetilde \psi = \widetilde J{}^\sharp \widetilde \psi = \widetilde{J^\sharp \psi}.
\]

\end{proof}

\begin{remark}
The action of $\mathbb R^\times$ on $T^\ast \widetilde M$ can be described as follows. We already remarked that every covector $\theta \in T^\ast \widetilde M$ over a point $\epsilon \in \widetilde M$ is of the form $d_\epsilon \widetilde \lambda$ for some $\lambda \in \Gamma (L)$. Then, for $r \in \mathbb R^\times$, we have $r . \theta = d_{r \cdot \epsilon} \widetilde \lambda$.
\end{remark}

A theorem equivalent to the following one has been first proved by Crainic and Zhu in the case when $L$ is a trivial line bundle (see \cite{CZ2007}).

\begin{theorem}\label{theor:Jac_hom_Poiss}
Jacobi manifold $(M, L, J)$ is integrable iff its Poissonization $(\widetilde M, \pi, Z)$ is so. In this case, let $(\mathcal G, H)$ be the source-simply connected contact groupoid integrating $(M, L, J)$, and let $(\widetilde{\mathcal G}, \omega, \mathcal Z)$ be the source-simply connected homogeneous symplectic groupoid integrating $(\widetilde M, \pi, Z)$. Then
\begin{enumerate}
\item $\mathcal G$ acts on the fibration $\widetilde M \to M$,
\item $\widetilde{\mathcal G}$ is the corresponding action groupoid, and
\item $(\widetilde{\mathcal G}, \omega, \mathcal Z)$ is the homogenization of $(\mathcal G, H)$.
\end{enumerate}
\end{theorem}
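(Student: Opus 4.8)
The statement packages a commuting square: on one side the homogenization dictionary of Section~\ref{Sec:hom_trick} (Jacobi structures $\leftrightarrow$ homogeneous Poisson structures, contact groupoids $\leftrightarrow$ homogeneous symplectic groupoids via the homogenization of LB-groupoids), on the other the two integration theorems, Theorem~\ref{theor:int_Jacobi} and Proposition~\ref{prop:hP}. The plan is to prove it by playing these against the uniqueness of source-simply connected integrations, using Lemma~\ref{lem:Poiss} as the bridge at the infinitesimal level. Throughout I use that $(M,L,J)$ is integrable iff the jet algebroid $(\mathfrak J^1 L)_J$ is, and that $(\widetilde M,\pi,Z)$ is integrable iff the cotangent algebroid $(T^\ast\widetilde M)_\pi$ is.

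Assume first $(M,L,J)$ is integrable and let $(\mathcal G,H)$ be its source-simply connected contact groupoid (Theorem~\ref{theor:int_Jacobi}), so $L_{\mathcal G}=\nu(H)$ sits in an LB-groupoid with core $L$ and $\mathcal G$ acts on $L$, hence fibrewise-linearly on $L^\ast$, hence on $\widetilde M$; this is (1). By the discussion of Section~\ref{Sec:hom_trick}, the homogenization of the LB-groupoid $L_{\mathcal G}$ is exactly the action groupoid $\mathcal G\ltimes\widetilde M=s^\ast\widetilde M\rightrightarrows\widetilde M$, which is (2); its source fibres coincide with those of $\mathcal G$, so it is source-simply connected. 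Next I would transport the contact structure: let $\omega_H\in\Omega^2_{L_{\mathcal G}}$ be the symplectic Atiyah $2$-form of $H$ (Example~\ref{ex:contact}); it is multiplicative (Example~\ref{ex:mult_contact}), so by Proposition~\ref{prop:mult_Atiyah} its homogenization $\widetilde{\omega_H}$ is a multiplicative $2$-form on $s^\ast\widetilde M$; moreover $\widetilde{\omega_H}$ is symplectic with $\mathcal L_{\mathcal Z_0}\widetilde{\omega_H}=\widetilde{\omega_H}$, where $\mathcal Z_0$ is the Euler vector field of $s^\ast\widetilde M\to\mathcal G$ (Example~\ref{ex:contact}), and $\mathcal Z_0$ is multiplicative, being the restriction to $s^\ast L^\ast\smallsetminus 0$ of the Euler vector field of the VB-groupoid $s^\ast L^\ast$ (Remark~\ref{rem:Euler_mult}). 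Finally, since $(\mathcal G,H)$ differentiates $(M,L,J)$, the natural map $t_L\colon L_{\mathcal G}\to L$ is a Jacobi map, so its homogenization---which, under the canonical identifications, is the target map of $s^\ast\widetilde M$---is a Poisson map onto $(\widetilde M,\pi)$ (recall $\pi=\widetilde J$, and that symplectization and Poissonization agree, Example~\ref{ex:Jacobi_structures}), while $\mathcal Z_0$ restricts on the unit manifold $\widetilde M$ to the Euler vector field $Z$. Hence $(s^\ast\widetilde M,\widetilde{\omega_H},\mathcal Z_0)$ is a source-simply connected homogeneous symplectic groupoid differentiating $(\widetilde M,\pi,Z)$; in particular $(\widetilde M,\pi,Z)$ is integrable, and by the uniqueness in Proposition~\ref{prop:hP} we get $(s^\ast\widetilde M,\widetilde{\omega_H},\mathcal Z_0)=(\widetilde{\mathcal G},\omega,\mathcal Z)$. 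Since the left-hand side is by construction the homogenization of $(\mathcal G,H)$ and equals $\mathcal G\ltimes\widetilde M$, this gives (3) and re-confirms (1) and (2).

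It remains to prove the converse implication of the equivalence: if $(\widetilde M,\pi,Z)$---equivalently $(T^\ast\widetilde M)_\pi$---is integrable, then so is $(M,L,J)$. Here I would take the source-simply connected integration $\mathcal H\rightrightarrows\widetilde M$ of $(T^\ast\widetilde M)_\pi$; by Lemma~\ref{lem:Poiss}, $\mathbb R^\times$ acts on $(T^\ast\widetilde M)_\pi$ by Lie algebroid automorphisms, and by functoriality of the Lie functor this action lifts uniquely to an action of $\mathbb R^\times$ on $\mathcal H$ by Lie groupoid automorphisms; it is free and proper because its restriction to the unit manifold $\widetilde M$ is the principal $\mathbb R^\times$-action. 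Therefore $\mathcal H/\mathbb R^\times\rightrightarrows M$ is a Lie groupoid, and it integrates the quotient algebroid $(T^\ast\widetilde M)_\pi/\mathbb R^\times$, which by Lemma~\ref{lem:Poiss}---diagram~(\ref{diag}) realising $(T^\ast\widetilde M)_\pi$ as a principal $\mathbb R^\times$-bundle in Lie algebroids over $(\mathfrak J^1 L)_J$---is exactly $(\mathfrak J^1 L)_J$; so $(M,L,J)$ is integrable.

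I expect the main obstacle to lie in this last paragraph, namely in making the descent rigorous: one needs that the Lie functor commutes with quotients by free and proper actions by Lie algebroid automorphisms, so that $\mathcal H/\mathbb R^\times$ really integrates $(T^\ast\widetilde M)_\pi/\mathbb R^\times$, and, en route, that the lift of the $\mathbb R^\times$-action to the source-simply connected groupoid is a genuinely smooth action and not merely a family of automorphisms. A secondary, purely bookkeeping point is to spell out the homogenization-dictionary identifications used in the second paragraph---that the homogenization of the Jacobi map $t_L$ is the target of $s^\ast\widetilde M$, and that the homogenization of a Jacobi map is a Poisson map between the Poissonizations---where the coincidence of symplectization and Poissonization (Example~\ref{ex:Jacobi_structures}) is what makes $\widetilde{\omega_H}$ land as the symplectic form integrating $\pi$ rather than some other Poisson structure.
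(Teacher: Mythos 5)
Your proposal is correct, and its skeleton is the paper's: the converse integrability implication (take the source-simply connected integration of $(T^\ast\widetilde M)_\pi$, integrate the $\mathbb R^\times$-action of Lemma \ref{lem:Poiss} to a free and proper action by groupoid automorphisms, and pass to the quotient to integrate $(\mathfrak J^1 L)_J$) is the paper's argument verbatim, including its brevity on the descent step you flag as the main obstacle, and items (1)--(2) are obtained in both cases by recognizing the integration of $(T^\ast\widetilde M)_\pi$ as the action groupoid $s^\ast\widetilde M$. Where you genuinely diverge is item (3). You transport the structures \emph{upward}: you homogenize the symplectic Atiyah form of $H$ to a multiplicative homogeneous symplectic form on $s^\ast\widetilde M$ (via Example \ref{ex:mult_contact} and Proposition \ref{prop:mult_Atiyah}), check that the resulting homogeneous symplectic groupoid differentiates to $(\widetilde M,\pi,Z)$, and conclude by the uniqueness clause of Proposition \ref{prop:hP}; as a by-product this also re-proves the forward integrability implication, which the paper instead gets separately from the fact that action algebroids of integrable algebroids are integrated by action groupoids. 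The paper transports the structures \emph{downward}: it starts from the canonical $(\omega,\mathcal Z)$ on $\widetilde{\mathcal G}$, descends it to a contact structure $\widetilde H$ on $\mathcal G$, and identifies $\widetilde H=H$ by the uniqueness (from Crainic--Salazar, as in Theorem \ref{theor:int_Jacobi}) of the multiplicative contact structure whose source is a Jacobi map, with multiplicativity of $\widetilde H$ again supplied by Example \ref{ex:mult_contact}. The two routes use the same multiplicativity equivalence in opposite directions and trade one uniqueness statement for another; the only extra debt your version incurs is the bookkeeping fact that the homogenization of the Jacobi map $t_L$ is a Poisson map between Poissonizations, which the paper never isolates as a lemma but which follows directly from identity (\ref{eq:hom_bracket}) and the compatibility of symplectization with Poissonization. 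No gap.
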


\begin{proof}
Suppose $(M, L, J)$ is integrable. This means that the jet algebroid $(\mathfrak J^1 L)_J$ is integrable. Hence, every action algebroid is integrated by the corresponding action groupoid. In particular, from Lemma \ref{lem:Poiss}, the cotangent algebroid $(T^\ast \widetilde M)_\pi$ is integrable. Conversely let $(\widetilde M, \pi, Z)$ be integrable. This means that the cotangent algebroid $(T^\ast \widetilde M)_\pi$ is integrable. Let $\widetilde{\mathcal G} \rightrightarrows \widetilde M$ be its source-simply connected integration. The $\mathbb R^\times$-action from Lemma \ref{lem:Poiss} does now integrate to a (necessarily free and proper) $\mathbb R^\times$-action on $\widetilde{\mathcal G}$ by groupoid automorphisms. It follows from Lemma \ref{lem:Poiss} again that $\widetilde{\mathcal G}/\mathbb R^\times \rightrightarrows M$ is a, necessarily source-simply connected, groupoid integrating $(\mathfrak J^1 L)_J$.

Now, assume that one, hence both, of $(\mathfrak J^1 L)_J$ and $(T^\ast \widetilde M)_\pi$ is integrable, and let $(\mathcal G, H)$ and $(\widetilde{\mathcal G}, \omega, \mathcal Z)$ be as in the statement. Then, as we have seen above, we have the following pull-back diagram, via the target:
\[
\xymatrix{ \widetilde{\mathcal G} \ar@<-2pt>[r] \ar@<+2pt>[r] \ar[d] & \widetilde M \ar[d]^-p\\
                \mathcal G \ar@<-2pt>[r] \ar@<+2pt>[r] & M}
\] 
meaning that $\widetilde{\mathcal G}$ is (canonically isomorphic to) the action groupoid induced by an action of $\mathcal G$ on $\widetilde M$. From Lemma \ref{lem:Poiss} this action integrates the canonical action of $(\mathfrak J^1 L)_J$. This proves (1) and (2). For (3) recall that $\omega$ is completely determined by the condition that the source of $\widetilde{\mathcal G}$ is a Poisson map, or, equivalently, a Jacobi map. Similarly, $H$ is completely determined by the condition that the source of $\widetilde{\mathcal G}$ is a Jacobi map \cite{CS2015}
. Now, the vector field $\mathcal Z$ on $\widetilde{\mathcal G}$ is, by construction, the Euler vector field for a principal $\mathbb R^\times$-bundle structure $\widetilde{\mathcal G}$. Hence $(\omega, \mathcal Z)$ induce a unique contact structure $\widetilde H$ on $\mathcal G$ such that $\omega$ is the homogenization of the symplectic Atiyah $2$-form of $\widetilde H$. As all the projections $s: (\widetilde{\mathcal G}, \omega) \to (\widetilde M, \pi)$, $p : (\widetilde M, \pi) \to (M, L, J)$ and $(\widetilde{\mathcal G}, \omega) \to (\mathcal G, \widetilde H)$ are Jacobi maps, it follows that $s: (\mathcal G, \widetilde H) \to (M, L, J)$ is also a Jacobi map. So $\widetilde H = H$ provided only $\widetilde H$ is multiplicative, which follows from the general discussion in Example \ref{ex:mult_contact}.
\end{proof}

    \subsection{Jacobi-Nijenhuis}\label{Sec:JN}
   Recall from Example \ref{ex:Nijenhuis} (see also \cite{MMP1999}), that a \emph{Jacobi-Nijenhuis manifold} is a manifold $M$ equipped with a \emph{Jacobi-Nijenhuis structure}, i.e.~a triple $(L, J, N)$ consisting of a line bundle $L \to M$, a skew-symmetric biderivation $J$ of $L$, and an Atiyah $(1,1)$-tensor $N : DL \to DL$ such that
\begin{enumerate}
\item $(L,J)$ is a Jacobi structure,
\item $N$ is Nijenhuis (i.e.~its torsion vanishes)
\item the following formulas (which can be actually written in several equivalent ways) hold:
\begin{align}
J (\psi, N^\dag \chi) + J (\chi, N^\dag \psi) & = 0  \label{eq:1Jac}\\
\mathcal L_{J^\sharp \psi} N^\dag \chi - \mathcal L_{J^\sharp \chi} N^\dag \psi - d_D J (N^\dag \psi, \chi) & = N^\dag [\psi, \chi]_J. \label{eq:2Jac}
\end{align}
\end{enumerate}

Here $N^\dag : \mathfrak J^1 L \to \mathfrak J^1 L$ is the transpose of $N$ (twisted by $L$). From (\ref{eq:1Jac}), biderivation $J_N = J (N -,-)$ is actually skew-symmetric. It then follows from (\ref{eq:2Jac}) and the other axioms, that $J_N$ is Jacobi. If $J$ is non degenerate, then it comes from a contact structure $H$ with the property that $TM/H = L$  and we call $(H, N)$ a \emph{contact-Nijenhuis} structure.

We summarize the remarks in Example \ref{ex:Nijenhuis} in the following

\begin{lemma}
Let $(M, L, J, N)$ be a Jacobi-Nijenhuis manifold, and let $(\widetilde M, \pi, Z)$ be the Poissonization of the underlying Jacobi manifold, i.e.~$\widetilde M = L^\ast \smallsetminus 0$, $\pi = \widetilde J$, and $Z$ is the Euler vector field on $\widetilde M$. Additionally, let $\widetilde N$ be the homogenization of $N$. Then $(\widetilde M, \pi, \widetilde N, Z)$ is a homogeneous Poisson-Nijenhuis manifold.  
\end{lemma}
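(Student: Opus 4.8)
The statement asserts that the Poissonization $\widetilde N$ of the Atiyah $(1,1)$-tensor $N$ together with $\pi = \widetilde J$ and the Euler vector field $Z$ assembles into a homogeneous Poisson--Nijenhuis structure on $\widetilde M$. The natural strategy is to deduce each of the defining axioms from the corresponding statement about the Atiyah data via the dictionary already set up in Section~\ref{Sec:hom_trick}. First I would recall that, by Example~\ref{ex:Jacobi_structures}, $(\widetilde M, \pi, Z)$ is a homogeneous Poisson manifold, so $\mathcal L_Z \pi = -\pi$ is already established; and by Example~\ref{ex:Nijenhuis}, $\widetilde N$ is a Nijenhuis structure (since $\mathcal T_{\widetilde N} = \widetilde{\mathcal T_N} = 0$ by the homogenization of the Fr\"olicher--Nijenhuis bracket, Equation~(\ref{eq:hom_Frol})) and moreover $\mathcal L_Z \widetilde N = 0$, again because $Z$ corresponds to $-\mathbb 1$ and $\widetilde N$ is homogeneous. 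So the only genuinely new content is that the pair $(\pi, \widetilde N)$ satisfies the two compatibility identities (\ref{eq:1}) and (\ref{eq:2}) defining a Poisson--Nijenhuis structure.

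For the compatibility conditions the key observation is that the homogenization construction is compatible with all the relevant operations: contraction, the exterior differential $d_D \leftrightarrow d$, the Schouten--Jacobi bracket, the Fr\"olicher--Nijenhuis bracket, and the Lie-algebroid bracket $[-,-]_J$ on $\mathfrak{J}^1 L$, which homogenizes to the cotangent bracket $[-,-]_\pi$ on $\Omega^1(\widetilde M)$ (this last fact is precisely the content of Lemma~\ref{lem:Poiss}, that $p_{T^\ast}:(T^\ast\widetilde M)_\pi\to(\mathfrak J^1 L)_J$ is a Lie algebroid map, equivalently $\widetilde{[\psi,\chi]_J}=[\widetilde\psi,\widetilde\chi]_\pi$). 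One also needs that $\widetilde{N^\dag}=\widetilde N{}^\ast$, i.e.\ that homogenization intertwines the $L$-twisted transpose on $\mathfrak{J}^1 L \leftrightarrow DL$ with the ordinary transpose on $T^\ast\widetilde M \leftrightarrow T\widetilde M$; this follows from the naturality of the pairing $\langle -,-\rangle:\mathfrak J^1 L\otimes DL\to L$ under homogenization, $\widetilde{\langle\psi,\Delta\rangle}=\langle\widetilde\psi,\widetilde\Delta\rangle$. Granting these, one takes arbitrary sections $\psi,\chi$ of $\mathfrak J^1 L$, homogenizes the Jacobi--Nijenhuis identities (\ref{eq:1Jac}) and (\ref{eq:2Jac}), and uses the compatibility of homogenization with each operation appearing in them — $J^\sharp \leftrightarrow \pi^\sharp$ in the sense $\widetilde{J^\sharp\psi}=\pi^\sharp\widetilde\psi$, $\mathcal L_{J^\sharp\psi} \leftrightarrow \mathcal L_{\pi^\sharp\widetilde\psi}$, $d_D \leftrightarrow d$, $[-,-]_J \leftrightarrow [-,-]_\pi$, $N^\dag \leftrightarrow \widetilde N{}^\ast$ — to read off exactly (\ref{eq:1}) and (\ref{eq:2}) evaluated on $\widetilde\psi,\widetilde\chi$. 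Since arbitrary sections of $\mathfrak J^1 L$ homogenize to a spanning family of $1$-forms on $\widetilde M$ (every covector is $d_\epsilon\widetilde\lambda$, and more generally every section of $\mathfrak J^1 L$ arises from the identification $T^\ast\widetilde M\simeq p^\ast\mathfrak J^1 L$), and because both sides of (\ref{eq:1})--(\ref{eq:2}) are tensorial, resp.\ differential, in the $1$-forms, the identities hold for all $1$-forms on $\widetilde M$.

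The one subtlety — and the step I expect to need the most care — is making rigorous the passage from ``the identity holds for $1$-forms of the form $\widetilde\psi$'' to ``the identity holds for all $1$-forms.'' For the algebraic identity (\ref{eq:1}) this is immediate by $C^\infty$-bilinearity, since the $\widetilde\psi$ span $T^\ast\widetilde M$ pointwise. For the differential identity (\ref{eq:2}) one must check that both sides are first-order differential operators in each $1$-form argument whose failure to be tensorial is controlled in a way compatible with the span; equivalently, one verifies directly that the difference of the two sides of (\ref{eq:2}), as a trilinear-in-arguments expression, is $C^\infty(\widetilde M)$-linear in each slot after the known cancellations — this is the standard argument showing the Magri--Morosi/Nijenhuis compatibility tensor is indeed a tensor — and then evaluates on the $\widetilde\psi$. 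Alternatively, and perhaps more cleanly, one invokes that the whole Poisson--Nijenhuis compatibility for $(\pi,\widetilde N)$ is equivalent to $\widetilde N{}^\ast:(T^\ast\widetilde M)_\pi\to T^\ast\widetilde M$ defining a compatible pair of Lie algebroid structures (a ``Nijenhuis'' deformation), which homogenizes term-by-term from the corresponding statement for $N^\dag$ on the jet algebroid $(\mathfrak J^1 L)_J$, using that homogenization is an isomorphism of the relevant complexes and brackets. Finally, the homogeneity clauses $\mathcal L_Z\pi=-\pi$ and $\mathcal L_Z\widetilde N=0$ are recorded from Examples~\ref{ex:Jacobi_structures} and \ref{ex:Nijenhuis}, completing the verification that $(\widetilde M,\pi,\widetilde N,Z)$ is a homogeneous Poisson--Nijenhuis manifold.
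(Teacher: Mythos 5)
Your proposal is correct and follows essentially the same route as the paper, which states this lemma as a summary of the remarks in Example~\ref{ex:Nijenhuis} and offers no further proof beyond asserting that the equivalence is clear from the homogenization dictionary (compatibility of homogenization with contraction, $d_D\leftrightarrow d$, the brackets, and the twisted transpose $N^\dag\leftrightarrow\widetilde N{}^\ast$, together with $\mathcal L_Z\widetilde J=-\widetilde J$ and $\mathcal L_Z\widetilde N=0$ from Proposition~\ref{prop:homo}). Your additional care about passing from homogeneous $1$-forms to arbitrary $1$-forms — tensoriality of (\ref{eq:1}) and, granting (\ref{eq:1}), tensoriality of the defect of (\ref{eq:2}) — correctly fills in the detail the paper leaves implicit.
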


\begin{definition}
A \emph{contact-Nijenhuis groupoid} is a Lie groupoid $\mathcal G \rightrightarrows M$ equipped with a \emph{multiplicative contact-Nijenhuis structure}, i.e.~a contact-Nijenhuis structure $(H, \mathcal N)$ such that both $H$ and $\mathcal N$ are multiplicative. 
\end{definition}


\begin{proposition}\label{prop:JN}
\quad
\begin{enumerate}
\item Let $(\mathcal G \rightrightarrows M, H, \mathcal N)$ be a contact-Nijenhuis groupoid. Then $M$ is equipped with a unique Jacobi-Nijenhuis structure $(L, J, N)$ such that $(M, L, J)$ differentiates $(\mathcal G, H)$ and $N = \mathcal N|_{DL}$. The Jacobi-Nijenhuis manifold $(M, L, J, N)$ is called the differentiation of $(\mathcal G, H, \mathcal N)$.
\item Conversely, let $(M, L, J, N)$ be an \emph{integrable} Jacobi-Nijenhuis manifold, i.e.~the underlying Jacobi manifold $(M, L, J)$ is integrable, and let $(\mathcal G, H)$ be the (source-simply) integration of the latter. Then $\mathcal G$ is equipped with a unique multplicative Nijenhuis tensor $\mathcal N$ such that $(\mathcal G, H, \mathcal N)$ is a contact-Nijenhuis groupoid and $(M, L, J, N)$ is its differentiation. $(\mathcal G, H, \mathcal N)$ is called the integration of $(M, L, J, N)$.
\end{enumerate}
Integration and differentiation establish a one-to-one correspondence between integrable Jacobi-Nijenhuis manifolds and source-simply connected contact-Nijenhuis groupoids. Shortly, Jacobi-Nijenhuis manifolds integrate to contact-Nijenhuis groupoids. 
\end{proposition}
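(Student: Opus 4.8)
The plan is to deduce the statement entirely from the homogenization scheme of Section \ref{Sec:hom_trick} together with the integration theorem for homogeneous Poisson--Nijenhuis manifolds (Proposition \ref{prop:hPN}), in exactly the same spirit in which Theorem \ref{theor:Jac_hom_Poiss} is obtained from the integration of homogeneous Poisson manifolds. Thus the whole proof will consist in translating the two assertions to be proved into assertions about homogeneous symplectic--Nijenhuis groupoids, where Proposition \ref{prop:hPN} applies, and then translating back.

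The first step is to record the ``groupoid version'' of the Lemma preceding the statement: a Lie groupoid $\mathcal G \rightrightarrows M$ is a contact--Nijenhuis groupoid, with $\nu(H) = L_{\mathcal G} \simeq t^\ast L$, \emph{if and only if} its homogenization $\widetilde{\mathcal G} = s^\ast(L^\ast \smallsetminus 0) \rightrightarrows \widetilde M$ is a homogeneous symplectic--Nijenhuis groupoid with Euler vector field $\mathcal Z$. Indeed, by Example \ref{ex:contact} the contact structure $H$ is the same datum as a symplectic Atiyah $2$-form $\omega_{\mathcal G}$ on $L_{\mathcal G}$, and by Example \ref{ex:Nijenhuis} the pair $(H,\mathcal N)$ is a contact--Nijenhuis structure iff its homogenization $(\widetilde\omega,\widetilde{\mathcal N})$ is a symplectic--Nijenhuis structure on $\widetilde{\mathcal G}$, with $\widetilde\omega=\widetilde{\omega_{\mathcal G}}$ and $\widetilde{\mathcal N}$ both homogeneous, i.e.\ $\mathcal L_{\mathcal Z}\widetilde\omega=\widetilde\omega$ and $\mathcal L_{\mathcal Z}\widetilde{\mathcal N}=0$. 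Finally, by Proposition \ref{prop:mult_Atiyah}, $\omega_{\mathcal G}$ (resp.\ $\mathcal N$, seen as an Atiyah $(1,1)$-tensor) is multiplicative on the LB-groupoid $L_{\mathcal G}$ iff $\widetilde\omega$ (resp.\ $\widetilde{\mathcal N}$) is multiplicative on $\widetilde{\mathcal G}$, and $\mathcal Z$ is automatically multiplicative by Remark \ref{rem:Euler_mult}. This yields the claimed equivalence.

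Granting this equivalence, I would argue as follows. For part (1), given a contact--Nijenhuis groupoid $(\mathcal G, H, \mathcal N)$, the associated homogeneous symplectic--Nijenhuis groupoid $(\widetilde{\mathcal G},\widetilde\omega,\widetilde{\mathcal N},\mathcal Z)$ differentiates, by Proposition \ref{prop:hPN}, to a homogeneous symplectic--Nijenhuis manifold $(\widetilde M,\pi,\widetilde N,\zeta)$, where $\zeta=\mathcal Z|_{\widetilde M}$ is the Euler vector field of $\widetilde M=L^\ast\smallsetminus 0$. By Example \ref{ex:Jacobi_structures} a homogeneous Poisson structure on $\widetilde M$ whose homogeneity vector field is the Euler vector field is precisely the Poissonization of a unique Jacobi structure $J$ on $L$; likewise a homogeneous Nijenhuis structure $\widetilde N$ is the homogenization of a unique Nijenhuis Atiyah tensor $N$ on $L$ (Example \ref{ex:Nijenhuis}), and the Poisson--Nijenhuis compatibility conditions (\ref{eq:1}), (\ref{eq:2}) for $(\pi,\widetilde N)$ are, by the homogenization dictionary, equivalent to (\ref{eq:1Jac}), (\ref{eq:2Jac}) for $(J,N)$. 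Hence $(M,L,J,N)$ is a Jacobi--Nijenhuis manifold; that $(M,L,J)$ differentiates $(\mathcal G,H)$ is part of Theorem \ref{theor:Jac_hom_Poiss}, and $N=\mathcal N|_{DL}$ because in the homogeneous Poisson--Nijenhuis correspondence $N$ is the restriction $\mathcal N|_M$, which in the VB-groupoid picture is exactly the restriction of $\mathcal N:DL_{\mathcal G}\to DL_{\mathcal G}$ to the side bundle $DL$. For part (2), an integrable Jacobi--Nijenhuis manifold $(M,L,J,N)$ homogenizes, by the Lemma preceding the statement, to a homogeneous Poisson--Nijenhuis manifold $(\widetilde M,\pi,\widetilde N,Z)$, which is integrable in the sense of Proposition \ref{prop:hPN} precisely because its cotangent algebroid $(T^\ast\widetilde M)_\pi$ is — and by Lemma \ref{lem:Poiss} and Theorem \ref{theor:Jac_hom_Poiss} this is equivalent to integrability of the jet algebroid $(\mathfrak J^1 L)_J$. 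Applying Proposition \ref{prop:hPN} and then descending via the equivalence of the previous paragraph produces the desired multiplicative contact--Nijenhuis structure on the source-simply connected integration $\mathcal G$ of $(\mathfrak J^1 L)_J$, with $\mathcal N$ recovered from $\widetilde{\mathcal N}$ via Proposition \ref{prop:mult_Atiyah} and $H$ the multiplicative contact structure already constructed in Theorem \ref{theor:Jac_hom_Poiss}. The one-to-one correspondence, and uniqueness at each stage, are inherited from the uniqueness clauses of Theorem \ref{theor:Jac_hom_Poiss} and Proposition \ref{prop:hPN}, together with the fact (used already in Theorem \ref{theor:Jac_hom_Poiss}) that the source fibers of the action groupoid $\widetilde{\mathcal G}$ are diffeomorphic to those of $\mathcal G$, so source-simple-connectedness is preserved in both directions.

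The step I expect to require the most care is the ``only if'' half of the equivalence in the second paragraph, as used in part (2): one must check that the multiplicative Nijenhuis tensor $\widetilde{\mathcal N}$ on $\widetilde{\mathcal G}$ produced by Proposition \ref{prop:hPN} genuinely \emph{descends} to a multiplicative Nijenhuis Atiyah tensor on the LB-groupoid $L_{\mathcal G}$. Descent requires $\widetilde{\mathcal N}$ to be homogeneous for the $\mathbb R^\times$-action on $\widetilde{\mathcal G}$, which is exactly the condition $\mathcal L_{\mathcal Z}\widetilde{\mathcal N}=0$ built into the definition of a homogeneous symplectic--Nijenhuis groupoid, while multiplicativity transfers by Proposition \ref{prop:mult_Atiyah}; once these two points are in place, the dictionary of Section \ref{Sec:hom_trick} does the rest and the remaining verifications are the routine axiom translations already carried out in Examples \ref{ex:Jacobi_structures} and \ref{ex:Nijenhuis}.
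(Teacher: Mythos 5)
Your proposal is correct and follows essentially the same route as the paper: homogenize the contact--Nijenhuis data to a homogeneous symplectic--Nijenhuis groupoid (using Proposition \ref{prop:mult_Atiyah} to transfer multiplicativity of the Nijenhuis tensor and Example \ref{ex:mult_contact} for the contact form), apply Proposition \ref{prop:hPN} together with Theorem \ref{theor:Jac_hom_Poiss}, and descend via the homogenization dictionary, with the uniqueness of the multiplicative homogeneity vector field guaranteeing that $\widetilde{\mathcal N}$ is $\mathcal Z$-invariant and hence descends. The extra scaffolding you supply (the explicit equivalence of the two groupoid notions, and the remark on source fibers) is only a more detailed write-up of the argument the paper gives tersely.
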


\begin{proof}
Let $(\mathcal G, H, \mathcal N)$ be as in the statement. Additionally, let $(\widetilde{\mathcal G}, \Omega, \widetilde{\mathcal N}, \mathcal Z)$ be the homogenization of $(\mathcal G, H, \mathcal N)$. We already know that $\Omega$ and $Z$ are multiplicative. From Proposition \ref{prop:mult_Atiyah}, multiplicativity of $\widetilde{\mathcal N}$ is equivalent to that of $\mathcal N$. We conclude that $(\widetilde{\mathcal G}, \Omega, \widetilde{\mathcal N}, \mathcal Z)$ is a homogeneous symplectic-Nijenhuis groupoid. So it induces a homogeneous Poisson-Nijenhuis structure $(\pi, \widetilde N, Z)$ on $\widetilde M$ by differentiation. Additionally, $Z$ is the Euler vector field. Hence $M$ is canonically equipped with a Jacobi-Nijenhuis structure (with the core $L$ of the LB-groupoid $(\nu (H) \rightrightarrows 0_M; \mathcal G \rightrightarrows M)$ as its Jacobi bundle) with the desired properties.

Conversely, let $(M, L, J, N)$ be an integrable Jacobi-Nijenhuis manifold, and let $(\mathcal G, H)$ be the source-simply connected integration of $(M, L, J)$, so that, in particular, $L_\mathcal G := T\mathcal G / H = t^\ast L$.  The homogenization $(\widetilde M, \pi, \widetilde N, Z)$ of $(M, L, J, N)$ is also integrable. Specifically $(\widetilde M, \pi, Z)$ integrates to the symplectization $(\widetilde{\mathcal G}, \Omega, \mathcal Z)$ of $(\mathcal G, H)$. Additionally, there is a unique multiplicative Nijenhuis tensor $\widetilde{\mathcal N}$ on $\widetilde{\mathcal G}$ such that $(\widetilde{\mathcal G}, \Omega, \widetilde{\mathcal N}, \mathcal Z)$ is a homogeneous symplectic-Nijenhuis groupoid and $\widetilde{\mathcal N}|_{T \widetilde M} = \widetilde N$. In particular, $\widetilde{\mathcal N}$ is the homogenization of a unique Atiyah $(1,1)$-tensor $\mathcal N : DL_{\mathcal G} \to DL_{\mathcal G}$ such that $(H, \mathcal N)$ is a contact-Nijenhuis structure. From Proposition \ref{prop:mult_Atiyah} $\mathcal N$ is multiplicative, so that $(\mathcal G, H, \mathcal N)$ is a contact-Nijenhuis groupoid with the desired properties.
\end{proof}

    \subsection{Holomorphic contact groupoids}
    
    We finally come to the holomorphic contact groupoids of the title. We refer to \cite{VW2016b} for conventions about holomorphic contact structures.
    
    \begin{definition}\label{def:compl_cont_grpd}
A \emph{holomorphic contact groupoid} is a complex groupoid $\mathcal G \rightrightarrows X = (M, j)$ equipped with a \emph{multiplicative holomorphic contact structure} $H$, i.e.~a multiplicative holomorphic contact distribution $H \subset T^{1,0} \mathcal G$ which covers $T^{1,0}X$, i.e.~$ds (H) = T^{1,0}X$.
\end{definition}

\begin{remark}
Definition \ref{def:compl_cont_grpd} needs some explanations. First of all, precisely as in the linear case, the $(1,0)$-tangent bundle $T^{1,0} \mathcal G$ (with the obvious structure maps) is a groupoid over $T^{1,0} X$. Then, a holomorphic distribution $\mathcal D \subset T^{1,0} \mathcal G$ is \emph{multiplicative} if it is a subgroupoid in the groupoid $T^{1,0} \mathcal G \rightrightarrows T^{1,0} X$ over a possibly smaller space of objects $\mathcal D_0 \subset T^{1,0} X$: $\mathcal D \rightrightarrows \mathcal D_0$. For a contact distribution $H$ we additionally require that $H_0 = T^{1,0} X$. In this case, one can prove, precisely as in the real case, that the normal bundle $L_{\mathcal G} := T^{1,0} \mathcal G / H$ sits in a (complex) VB-groupoid $(L_{\mathcal G}\rightrightarrows 0_X; \mathcal G\rightrightarrows X)$ with trivial side bundle. In particular, $\mathcal G$ acts on the core $L$ of $(L_{\mathcal G}\rightrightarrows 0_X; \mathcal G\rightrightarrows X)$ and $L_{\mathcal G} \simeq t^\ast L$. Notice that both $L_{\mathcal G}$ and $L$ are automatically holomorphic line bundles. 
\end{remark}

holomorphic contact structures are equivalent to complex symplectic Atiyah $2$-forms exactly as in the real case. Namely, a holomorphic contact structure $H$ on a complex manifold $X = (M,j)$ determines the holomorphic $L_{\mathcal G}$-valued $1$-form
\[
\theta_H : T^{1,0} \mathcal G \to L := T^{1,0}X/H, \quad \xi \mapsto \xi + H.
\]
Composing with the symbol we get the holomorphic Atiyah $1$-form $\Theta := \theta_H \circ \sigma : D^{1,0} L \to L$. Recall that $D^{1,0} L$ is a holomorphic Lie algebroid. Denote by $\partial_D$ the associated differential on holomorphic forms. Then, exactly as in the real case, $\omega = \partial_D \Theta$ is a non-degenerate, holomorphic Atiyah $2$-form, what we call a \emph{complex symplectic Atiyah $2$-form}. Homogenizing $\omega$ we get a homogeneous complex symplectic manifold $(\widetilde X, \widetilde \omega, Z)$, the \emph{symplectization} of $(\mathcal G, H)$, which contains a full information on $(X, H)$. Here $\widetilde X = L^\ast \smallsetminus 0$, where $L^\ast$ is the complex dual to $L$, and $Z$ is the (holomorphic) Euler vector field on $\widetilde X$. We leave the obvious details to the reader. 

Now let $(\mathcal G, H)$ be a holomorphic contact groupoid. Its symplectization $(\widetilde {\mathcal G}, \Omega, \mathcal Z)$ is a homogeneous complex symplectic groupoid. In particular multiplicativity of $H$ is equivalent to multiplicativity of $\Omega$ exactly as in the real case. In the complex case, however, homogenization can be performed in $2$-steps (see Section \ref{Sec:complex_case}). First let $\widehat{\mathcal G}$ be the real projective bundle of the complex dual $L_\mathcal G^\ast \to \mathcal G$ of $L_{\mathcal G}$. The proof of the following lemma is straightforward and it is left to the reader.

\begin{lemma}
There is a canonical Lie groupoid structure $\widehat{\mathcal G} \rightrightarrows \widehat M$, where $\widehat M$ is the real projective bundle of the complex dual $L^\ast \to M$ of $L$. More precisely, diagram
\[
\xymatrix{ \widehat{\mathcal G} \ar@<-2pt>[r] \ar@<+2pt>[r] \ar[d]_-{q_{\mathcal G}} & \widehat M \ar[d]^-q\\
                \mathcal G \ar@<-2pt>[r] \ar@<+2pt>[r] & M}
\]  
is a principal $U(1)$-bundle in the category of real Lie groupoids, meaning that $U(1)$ acts by Lie groupoid automorphisms. As it is also a pull-back diagram, via the target, $\widehat{\mathcal G} \rightrightarrows \widehat M$ is the action groupoid corresponding to the obvious action of $\mathcal G$ on $\widehat M$ (the one induced by the action on $L$).
\end{lemma}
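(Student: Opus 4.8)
The plan is to carry out, in the category of Lie groupoids, the first (the $U(1)$-) step of the two-step complex homogenization of Section \ref{Sec:complex_case}, applied now to the holomorphic line bundle groupoid $L_{\mathcal G}\to\mathcal G$ rather than to $L\to X$, and to combine it with the homogenization of line bundle groupoids discussed in Section \ref{Sec:hom_trick}.

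First I would record the relevant VB-groupoid data. Since $(\mathcal G,H)$ is a holomorphic contact groupoid, the normal bundle $L_{\mathcal G}=T^{1,0}\mathcal G/H$ sits in a complex LB-groupoid $(L_{\mathcal G}\rightrightarrows 0_X;\mathcal G\rightrightarrows X)$ with trivial side bundle and core $L=T^{1,0}X/H$, and $L_{\mathcal G}\simeq t^\ast L$ as VB-groupoids. Dualizing, the complex dual $L_{\mathcal G}^\ast\to\mathcal G$ fits into the dual VB-groupoid $(L_{\mathcal G}^\ast\rightrightarrows L^\ast;\mathcal G\rightrightarrows X)$, which has \emph{trivial core} and side bundle $L^\ast$; hence, exactly as for VB-groupoids with trivial core, $L_{\mathcal G}^\ast\simeq s^\ast L^\ast$ canonically, and its groupoid structure maps are all regular complex vector bundle maps (fibrewise $\mathbb C$-linear isomorphisms) encoding the action of $\mathcal G$ on $L^\ast$ dual to its action on $L$.

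Next I would apply the real projectivization functor $E\mapsto\mathbb R\mathbb P(E)$, which commutes with pull-backs of complex vector bundles and sends regular complex vector bundle maps to diffeomorphisms of projective bundles. From $L_{\mathcal G}^\ast\simeq s^\ast L^\ast$ this gives $\widehat{\mathcal G}=\mathbb R\mathbb P(L_{\mathcal G}^\ast)\simeq\mathbb R\mathbb P(s^\ast L^\ast)=s^\ast\mathbb R\mathbb P(L^\ast)=s^\ast\widehat M$, and applying it to the structure maps of $L_{\mathcal G}^\ast\rightrightarrows L^\ast$ produces smooth maps turning $\widehat{\mathcal G}\rightrightarrows\widehat M$ into a Lie groupoid; equivalently, $\mathbb C$-linearity of the $\mathcal G$-action on $L^\ast$ makes it descend to an action of $\mathcal G$ on $\widehat M=\mathbb R\mathbb P(L^\ast)$, and $\widehat{\mathcal G}=s^\ast\widehat M$ is precisely the corresponding action groupoid. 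Identifying $s^\ast\widehat M$ with $t^\ast\widehat M$ via $(g,[\phi])\mapsto(g,g.[\phi])$ then exhibits the square in the statement as a fibred product over the target $t$, which is the last claim. For the principal $U(1)$-structure: as in Section \ref{Sec:complex_case} the projectivization of a complex line bundle carries a principal $U(1)$-action $(\varphi,[\phi])\mapsto[e^{i\varphi/2}\phi]$, so both columns $\widehat{\mathcal G}\to\mathcal G$ and $\widehat M\to M$ are principal $U(1)$-bundles; and since the structure maps of $L_{\mathcal G}^\ast\rightrightarrows L^\ast$ are $\mathbb C$-linear on fibres they commute with multiplication by $e^{i\varphi/2}$, hence the induced maps of projective bundles are $U(1)$-equivariant, i.e.~$U(1)$ acts on $\widehat{\mathcal G}\rightrightarrows\widehat M$ by Lie groupoid automorphisms.

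The only point demanding (routine) care is the compatibility of the projectivization functor with \emph{all} the VB-groupoid structure maps of $L_{\mathcal G}^\ast\rightrightarrows L^\ast$; this is unproblematic precisely because these maps are \emph{regular} — fibrewise $\mathbb C$-linear isomorphisms, which is a consequence of the triviality of the core of this dual VB-groupoid — so that passing to $\mathbb R\mathbb P$ raises no well-definedness issue. I do not expect any serious obstacle here: the lemma is essentially the statement that the $U(1)$-step of the homogenization scheme is functorial, hence carries groupoids to groupoids and equivariant maps to equivariant maps.
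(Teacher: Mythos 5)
Your proposal is correct, and it follows exactly the route the paper intends: the paper declares this proof ``straightforward and left to the reader,'' and the surrounding text sets up precisely the tools you use (the dual VB-groupoid with trivial core identifying $L_{\mathcal G}^\ast$ with $s^\ast L^\ast$ as the action groupoid of the $\mathcal G$-action on $L^\ast$, fibrewise projectivization, and the $U(1)$-action $[\phi]\mapsto[e^{i\varphi/2}\phi]$ from Section \ref{Sec:complex_case}). Your observation that regularity of all structure maps of the trivial-core dual VB-groupoid is what makes the passage to $\mathbb R\mathbb P$ well defined, together with the identification $s^\ast\widehat M\simeq t^\ast\widehat M$ via $(g,[\phi])\mapsto(g,g.[\phi])$, supplies exactly the details the paper omits.
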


Now, recall from \cite{VW2016b}, that $H$ induces a real contact structure $\widehat H$ on $\widehat{\mathcal G}$ (see also \cite{K1959}). To see this, first symplectize $(\mathcal G, H)$ to $(\widetilde{\mathcal G}, \Omega, \mathcal Z)$, and use that 1) $(\operatorname{Re}\Omega, 2 \operatorname{Re} \mathcal Z)$ is a real homogeneous symplectic structure, and 2) $\widetilde{\mathcal G}$ is a principal $\mathbb R^\times$-bundle over $\widehat{\mathcal G}$ with Euler vector field given by $2 \operatorname{Re} \mathcal Z$ (for more details see \cite{VW2016b}), so that 3) $\operatorname{Re}\Omega$ is the symplectization of a unique contact structure $\widehat H$ on $\widehat{\mathcal G}$.

\begin{lemma}\label{lem:mult_H}
The multiplicativity of $\widehat H$ is equivalent to that of $H$.
\end{lemma}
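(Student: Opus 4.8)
The plan is to reduce the statement to the already-established equivalence between multiplicativity of a contact structure and multiplicativity of the associated symplectic Atiyah form (Example~\ref{ex:mult_contact} in the real case), together with the two-step nature of the complex homogenization described in Section~\ref{Sec:complex_case}. Concretely, recall that $H$ (a multiplicative holomorphic contact distribution on $\mathcal G \rightrightarrows X$) gives rise to a complex symplectic Atiyah $2$-form $\omega = \partial_D \Theta$ on $L_{\mathcal G}$, and that homogenizing produces the homogeneous complex symplectic groupoid $(\widetilde{\mathcal G}, \Omega, \mathcal Z)$, where multiplicativity of $H$ is equivalent to multiplicativity of $\Omega$ (this is the complex analogue of Example~\ref{ex:mult_contact}, which we may invoke). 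On the other hand, $\widehat H$ is a \emph{real} contact structure on $\widehat{\mathcal G}$, whose real symplectization is exactly $(\widetilde{\mathcal G}, \operatorname{Re}\Omega)$, viewed now as a principal $\mathbb R^\times$-bundle over $\widehat{\mathcal G}$ with Euler vector field $2\operatorname{Re}\mathcal Z$. By Example~\ref{ex:mult_contact} applied to the real LB-groupoid $(\widehat L_{\widehat{\mathcal G}} \rightrightarrows 0_{\widehat M}; \widehat{\mathcal G}\rightrightarrows \widehat M)$, multiplicativity of $\widehat H$ is equivalent to multiplicativity of $\operatorname{Re}\Omega$ as a real multiplicative symplectic form on $\widetilde{\mathcal G}$.

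So the statement reduces to: $\Omega$ is multiplicative (as a complex $2$-form, i.e.\ both $\operatorname{Re}\Omega$ and $\operatorname{Im}\Omega$ are multiplicative) if and only if $\operatorname{Re}\Omega$ alone is multiplicative. One direction is trivial. For the converse, I would argue that $\widetilde{\mathcal G}$ carries a multiplicative complex structure $j_{\widetilde{\mathcal G}}$ (it is a complex Lie groupoid, being the complex homogenization of the complex Lie groupoid $\mathcal G$ acting on the holomorphic line bundle $L^\ast$), and that $\Omega \in \Omega^{2,0}(\widetilde{\mathcal G})$ is of type $(2,0)$. Writing $\Omega = \operatorname{Re}\Omega - i\,(\operatorname{Re}\Omega)(j_{\widetilde{\mathcal G}}\,\cdot\,,\,\cdot\,)$ — the standard expression of a $(2,0)$-form in terms of its real part and the complex structure — one sees that $\operatorname{Im}\Omega = -(\operatorname{Re}\Omega)_{j_{\widetilde{\mathcal G}}}$, i.e.\ $\operatorname{Im}\Omega$ is obtained from $\operatorname{Re}\Omega$ by precomposition with the multiplicative VB-groupoid map $j_{\widetilde{\mathcal G}}: T\widetilde{\mathcal G} \to T\widetilde{\mathcal G}$. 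Since composition of VB-groupoid maps is a VB-groupoid map (Remark~\ref{rem:mult_tens}), multiplicativity of $\operatorname{Re}\Omega$ and of $j_{\widetilde{\mathcal G}}$ forces multiplicativity of $\operatorname{Im}\Omega$, hence of $\Omega$.

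I would then assemble the chain of equivalences: $H$ multiplicative $\Leftrightarrow$ $\Omega$ multiplicative (complex version of Example~\ref{ex:mult_contact}) $\Leftrightarrow$ $\operatorname{Re}\Omega$ multiplicative (the argument just sketched, using that $\widetilde{\mathcal G}$ is a complex Lie groupoid and $\Omega$ is of type $(2,0)$) $\Leftrightarrow$ $\widehat H$ multiplicative (real Example~\ref{ex:mult_contact} applied to $\widehat{\mathcal G}\rightrightarrows\widehat M$, using that $\operatorname{Re}\Omega$ is the real symplectization of $\widehat H$ and $2\operatorname{Re}\mathcal Z$ is the corresponding Euler vector field). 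The one point requiring a little care — and the likely main obstacle — is checking that $j_{\widetilde{\mathcal G}}$ really is a \emph{multiplicative} tensor on $\widetilde{\mathcal G}$, i.e.\ that the complex structure underlying the complex Lie groupoid $\widetilde{\mathcal G}$ is the same as the one entering the type decomposition of $\Omega$; this follows from the construction of the complex homogenization in Section~\ref{Sec:complex_case}, where $\widetilde X$ (and hence $\widetilde{\mathcal G}$, being an action groupoid over it for the holomorphic action of the complex Lie groupoid $\mathcal G$) is a holomorphic principal $\mathbb C^\times$-bundle, so its complex structure is automatically compatible with all groupoid structure maps. Everything else is a routine unwinding of definitions, which I would leave to the reader in the same spirit as the surrounding lemmas.
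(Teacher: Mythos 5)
Your proof is correct and follows essentially the same route as the paper: the identical chain of equivalences ($H$ multiplicative $\Leftrightarrow$ $\Omega$ multiplicative $\Leftrightarrow$ $\operatorname{Re}\Omega$ multiplicative $\Leftrightarrow$ $\widehat H$ multiplicative), with the first and last steps handled by the (complex and real) versions of Example \ref{ex:mult_contact} and the middle step by the multiplicativity of the complex structure on $\widetilde{\mathcal G}$. Your explicit identity $\operatorname{Im}\Omega = -(\operatorname{Re}\Omega)(j_{\widetilde{\mathcal G}}\,\cdot\,,\,\cdot\,)$ simply spells out the detail the paper leaves to the reader in its middle equivalence.
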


\begin{proof}
There is a sequence of equivalences: $H$ is multiplicative iff $\Omega$ is so, iff $\operatorname{Re} \Omega$ is so, iff $\widehat H$ is so. The first one, as we already mentioned, can be proved exactly as in the real case. The second one is straightforward, just use the definition of multiplicative form in one direction, and use multiplicativity of the complex structure on $\widetilde{\mathcal G}$ in the other direction. Last equivalence follows from the fact that $\operatorname{Re} \Omega$ is the symplectization of $\widehat H$ and the discussion in Example \ref{ex:contact}.
\end{proof}

As in Section \ref{Sec:hom_trick}, denote by $q : \widehat M \to M$ the projection, and let $\widehat L \to \widehat M$ be the dual of the tautological bundle, so that $\widehat L = q^\ast L$. Similarly, denote by $q_{\mathcal G}: \widehat{\mathcal G} \to \mathcal G$ the projection, and let $\widehat L_{\mathcal G} \to \widehat{\mathcal G}$ be the dual of the tautological bundle on $\widehat{\mathcal G}$, so that $\widehat L_{\mathcal G} = q_{\mathcal G}^\ast L_{\mathcal G} = t^\ast \widehat L$. More precisely, $\widehat{\mathcal G}$ acts on $\widehat L$, and $(\widehat L_{\mathcal G}\rightrightarrows 0_{\widehat M}; \widehat{\mathcal G}\rightrightarrows \widehat M)$ is the LB-groupoid corresponding to this action. Finally, recall that the holomorphic line bundle structure on $L \to X$ induces a (torsion free) complex structure on the gauge algebroid $D \widehat L \to \widehat M$. By the same reason, the holomorphic line bundle structure on $L_{\mathcal G} \to \mathcal G$ induces a (torsion free) complex structure $\widehat j_{\mathcal G} : D \widehat L_{\mathcal G} \to D \widehat L_{\mathcal G}$. 

\begin{proposition}
Let $(\mathcal G, H)$ be a holomorphic contact groupoid, and let $\widehat{\mathcal G}$ be as above. Then $\widehat j_{\mathcal G} : D\widehat L_{\mathcal G} \to D\widehat L_{\mathcal G}$ is a multiplicative Atiyah $(1,1)$-tensor, so that $(\widehat{\mathcal G}, \widehat H, \widehat j_{\mathcal G})$ is a contact-Nijenhuis groupoid.
\end{proposition}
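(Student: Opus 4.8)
The plan is to reduce everything to Proposition~\ref{prop:mult_Atiyah} via the homogenization scheme, exactly as in the non-groupoid case of Section~\ref{Sec:complex_case}.

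First I would set up the relevant objects on the groupoid side. Symplectizing $(\mathcal G, H)$ produces a homogeneous complex symplectic groupoid $(\widetilde{\mathcal G}, \Omega, \mathcal Z)$, with $\widetilde{\mathcal G} = L_{\mathcal G}^\ast \smallsetminus 0$; since $\mathcal G \rightrightarrows X$ is a complex Lie groupoid, the complex homogenization construction preserves holomorphicity of all structure maps, so $\widetilde{\mathcal G} \rightrightarrows \widetilde X$ is again a complex Lie groupoid. Its complex structure $j_{\widetilde{\mathcal G}}$ is \emph{multiplicative}: the tangent map of a holomorphic groupoid map is $\mathbb C$-linear, so $j_{\widetilde{\mathcal G}} : T\widetilde{\mathcal G} \to T\widetilde{\mathcal G}$ is a VB-groupoid map (cf.\ Remark~\ref{rem:mult_tens}). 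On the other hand, applying Remark~\ref{rem:U(1)_hom} to $L_{\mathcal G} \to \mathcal G$ we get $\widehat L_{\mathcal G}{}^\ast \smallsetminus 0 = \widetilde{\mathcal G}$; combined with Proposition~\ref{prop:D_J^1}, this says that the complex homogenization of $(\mathcal G, H)$ factors through the LB-groupoid $(\widehat L_{\mathcal G} \rightrightarrows 0_{\widehat M}; \widehat{\mathcal G} \rightrightarrows \widehat M)$ precisely as in the non-groupoid setting, and that $\widehat j_{\mathcal G}$ is by construction the Atiyah $(1,1)$-tensor on $\widehat L_{\mathcal G}$ whose homogenization is $j_{\widetilde{\mathcal G}}$.

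With these identifications in place, Proposition~\ref{prop:mult_Atiyah} applies verbatim: $\widehat j_{\mathcal G}$ is multiplicative if and only if $j_{\widetilde{\mathcal G}}$ is, and the latter holds by the previous step. This yields the first assertion. For the claim that $(\widehat{\mathcal G}, \widehat H, \widehat j_{\mathcal G})$ is a contact-Nijenhuis groupoid it remains to check that $(\widehat H, \widehat j_{\mathcal G})$ is a contact-Nijenhuis structure: multiplicativity of $\widehat H$ is Lemma~\ref{lem:mult_H}, and multiplicativity of $\widehat j_{\mathcal G}$ was just proved. To get the contact-Nijenhuis property, observe that $(\operatorname{Re}\Omega, j_{\widetilde{\mathcal G}}, 2\operatorname{Re}\mathcal Z)$ is a homogeneous symplectic-Nijenhuis structure on $\widetilde{\mathcal G}$: $(\operatorname{Re}\Omega, j_{\widetilde{\mathcal G}})$ is symplectic-Nijenhuis since $j_{\widetilde{\mathcal G}}$ is an integrable complex structure with $\Omega$ of type $(2,0)$, and $2\operatorname{Re}\mathcal Z$ is a homogeneity vector field commuting with $j_{\widetilde{\mathcal G}}$ because $\mathcal Z$ is holomorphic. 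Since $2\operatorname{Re}\mathcal Z$ is exactly the Euler vector field of the principal $\mathbb R^\times$-bundle $\widetilde{\mathcal G} \to \widehat{\mathcal G}$, this homogeneous symplectic-Nijenhuis structure is, by Example~\ref{ex:Nijenhuis}, the homogenization of a contact-Nijenhuis structure on $\widehat{\mathcal G}$; by uniqueness its contact part is $\widehat H$ (as $\operatorname{Re}\Omega$ symplectizes $\widehat H$, Example~\ref{ex:contact}) and its Nijenhuis Atiyah tensor is the de-homogenization of $j_{\widetilde{\mathcal G}}$, i.e.\ $\widehat j_{\mathcal G}$.

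The step I expect to be the main obstacle is the clean identification ``homogenization of $\widehat j_{\mathcal G}$ equals $j_{\widetilde{\mathcal G}}$'' at the groupoid level --- that is, checking that the two-step $U(1)$-then-$\mathbb R^\times$ homogenization is compatible with all the groupoid structures, so that the projective-bundle functor $\mathcal G \mapsto \widehat{\mathcal G}$ and the gauge-algebroid/dual/jet functors of Proposition~\ref{prop:D_J^1} intertwine as expected. Granting the (LB-)groupoid structures produced by these functors, as the excerpt does, this is a bookkeeping verification; the substantive content is entirely carried by Proposition~\ref{prop:mult_Atiyah}.
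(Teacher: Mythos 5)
Your proposal is correct and follows essentially the same route as the paper: the paper's proof is precisely the observation that $j_{\widetilde{\mathcal G}}$ is the homogenization of $\widehat j_{\mathcal G}$, so that Proposition \ref{prop:mult_Atiyah} makes multiplicativity of the two equivalent, with multiplicativity of $j_{\widetilde{\mathcal G}}$ coming for free from $\widetilde{\mathcal G}$ being a complex Lie groupoid. Your additional verification that $(\widehat H, \widehat j_{\mathcal G})$ is genuinely a contact-Nijenhuis structure is a point the paper leaves implicit (it is inherited from the non-groupoid discussion of Section \ref{Sec:complex_case} and \cite{VW2016b}), but it does not change the argument.
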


\begin{proof}
The proof is analogous to the proof of Lemma \ref{lem:mult_H}. Namely, $j_{\widetilde{\mathcal G}}$ is the homogenization of $\widehat j_{\mathcal G}$. Hence, from Proposition \ref{prop:mult_Atiyah}, multiplicativity of the latter is equivalent to multiplicativity of the former.
 \end{proof}

    \subsection{Holomorphic Jacobi}\label{Sec:Hol_Jac}

Let $X = (M, j)$ be a complex manifold, and let $(L, J)$ be a holomorphic Jacobi structure on it. This means that $L \to X$ is a holomorphic line bundle, and 
\[
J : \wedge^2 \mathfrak J^1 L \to L
\]
is a holomorphic bi-derivation of $L$ such that $[J,J]^{SJ} = 0$. Here $\mathfrak J^1 L$ is the \emph{holomorphic jet bundle}. For more details on holomorphic Jacobi structures, in particular for details about their Poissonization, see \cite{VW2016b}. As we did above, denote by $\widehat M$ the projective bundle of the complex dual of $L$, and let $\widehat L \to \widehat M$ be the dual of the tautological bundle. Denote by $q : \widehat M \to M$ the projection. It is a principal $U(1)$-bundle projection. Recall that 
\begin{enumerate}
\item there is a (torsion free) complex structure $\widehat j : D \widehat L \to D \widehat L$;
\item there is a Jacobi structure $(\widehat L, \widehat J)$;
\item triple $(\widehat L, \widehat J, \widehat j)$ is a Jacobi-Nijenhuis structure.
\end{enumerate}
Additionally, the real Lie algebroid $(\mathfrak J^1 L)_{J, \mathrm{Re}}$ of the holomorphic jet algebroid $(\mathfrak J^1 L)_{J}$ of $(X, L, J)$ acts on $\widehat M$, and the jet algebroid $(\mathfrak J^1_{\mathbb R} \widehat L)_{4\widehat J}$ is canonically isomorphic to the corresponding action algebroid (here we use $\mathfrak J^1_{\mathbb R}$ for \emph{real} jets). In particular there is a projection of Lie algebroids
\[
(\mathfrak J^1_{\mathbb R} \widehat L)_{4\widehat J} \to (\mathfrak J^1 L)_{J, \mathrm{Re}}.
\]

\begin{remark}\label{rem:U(1)_action}
Derivation $\widehat j \mathbb 1$ of $\widehat L$ generates an $U(1)$-action on $\widehat L$ lifting the canonical one on $\widehat M$.
\end{remark}

\begin{lemma}
 Derivation $\mathcal L_{\widehat j \mathbb 1} - \widehat j{}^\dag$ of $\mathfrak J^1_{\mathbb R} \widehat L$ is a derivation of the jet algebroid $(\mathfrak J^1_{\mathbb R} \widehat L)_{\widehat J}$. Additionally, it generates a (necessarily free and proper) $U(1)$-action (necessarily by algebroid automorphisms) turning diagram
\[
\xymatrix{(\mathfrak J^1_{\mathbb R} \widehat L)_{4\widehat J} \ar[r] \ar[d] & \widehat M \ar[d]^-q \\
(\mathfrak J^1 L)_{J, \mathrm{Re}}  \ar[r] & M}
\] into a principal $U(1)$-bundle in the category of Lie algebroids, meaning that the $U(1)$-action is by Lie algebroid automorphisms.
\end{lemma}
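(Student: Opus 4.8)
The plan is to construct the $U(1)$-action explicitly and then read off its infinitesimal generator, so that the derivation property comes for free. Recall from the discussion preceding the statement that, as a vector bundle, $(\mathfrak J^1_{\mathbb R}\widehat L)_{4\widehat J}$ is the pull-back $q^\ast(\mathfrak J^1 L)_{J,\mathrm{Re}}$ and, as a Lie algebroid, it is the action algebroid of the action of $(\mathfrak J^1 L)_{J,\mathrm{Re}}$ on $\widehat M$. First I would lift the canonical (free, proper) $U(1)$-action on the principal bundle $q\colon\widehat M\to M$ to $q^\ast(\mathfrak J^1 L)_{J,\mathrm{Re}}$ by the tautological pull-back construction: since the elements of $U(1)$ cover $\mathrm{id}_M$, they act fibre-wise trivially on $q^\ast(\mathfrak J^1 L)_{J,\mathrm{Re}}$ and merely move the points of $\widehat M$; in particular pull-back sections (those coming from sections of $(\mathfrak J^1 L)_{J,\mathrm{Re}}$) are $U(1)$-invariant. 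I would then check that this lift acts by Lie algebroid automorphisms, which, for an action algebroid, is equivalent to the requirement that the vector fields on $\widehat M$ defining the $(\mathfrak J^1 L)_{J,\mathrm{Re}}$-action be $U(1)$-invariant.

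The hard part — and the only place where the holomorphic structure really enters — is precisely this invariance, which I would establish after passing to the homogenization $\widetilde X=L^\ast\smallsetminus 0$. Since the invariance is a local statement on $\widehat M$ and the action map $a$ satisfies $a(q^\ast f\cdot\psi)=(q^\ast f)\,a(\psi)$ with $q^\ast f$ automatically $U(1)$-invariant, it suffices to treat $\psi$ running over a local holomorphic frame of $\mathfrak J^1 L$; for such $\psi$, $J^\sharp\psi$ is a holomorphic derivation of $L$, so its homogenization $\widetilde{J^\sharp\psi}$ is a homogeneous holomorphic vector field on $\widetilde X$ (Lemma \ref{lem:Poiss} and Section \ref{Sec:complex_case}), and $a(\psi)$ is realized, up to a harmless constant, by $2\operatorname{Re}\widetilde{J^\sharp\psi}$ (which commutes with the real Euler field $\zeta=2\operatorname{Re}Z$, hence descends to $\widehat M=\widetilde X/\mathbb R^\times$). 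The $U(1)$-action on $\widehat M$ is induced, through the same quotient, by the generator $j_{\widetilde M}\zeta$ of the subgroup $U(1)\subset\mathbb C^\times$ acting on $\widetilde X$ (Remark \ref{rem:U(1)_action} and Section \ref{Sec:complex_case}). Now $\mathcal L_{2\operatorname{Re}\widetilde{J^\sharp\psi}}(j_{\widetilde M}\zeta)=(\mathcal L_{2\operatorname{Re}\widetilde{J^\sharp\psi}}j_{\widetilde M})\zeta+j_{\widetilde M}(\mathcal L_{2\operatorname{Re}\widetilde{J^\sharp\psi}}\zeta)=0$, because $2\operatorname{Re}\widetilde{J^\sharp\psi}$ is holomorphic (so it commutes with $j_{\widetilde M}$) and homogeneous (so it commutes with $\zeta$); hence it commutes with $j_{\widetilde M}\zeta$ as well, and the descended action fields on $\widehat M$ are $U(1)$-invariant, as required. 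Freeness and properness of the $U(1)$-action on $q^\ast(\mathfrak J^1 L)_{J,\mathrm{Re}}$ are inherited from $\widehat M$, and the quotient is $(\mathfrak J^1 L)_{J,\mathrm{Re}}$ by construction of the pull-back, so the square is a principal $U(1)$-bundle in the category of Lie algebroids.

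Finally I would identify the infinitesimal generator of this $U(1)$-action with $\mathcal L_{\widehat j\mathbb 1}-\widehat j{}^\dag$, which also settles the derivation statement (a generator of a one-parameter group of Lie algebroid automorphisms is a Lie algebroid derivation, and the brackets of $(\mathfrak J^1_{\mathbb R}\widehat L)_{4\widehat J}$ and $(\mathfrak J^1_{\mathbb R}\widehat L)_{\widehat J}$ differ only by an overall rescaling of bracket and anchor). The generator is a derivation of the vector bundle $\mathfrak J^1_{\mathbb R}\widehat L$ whose symbol equals that of $\widehat j\mathbb 1$, and whose kernel on sections is exactly the set of pull-back sections coming from $\mathfrak J^1 L$; by Proposition \ref{prop:hom_jet_compl} this kernel is precisely $\ker(\mathcal L_{\widehat j\mathbb 1}-\widehat j{}^\dag)$. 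Since $\mathcal L_{\widehat j\mathbb 1}-\widehat j{}^\dag$ has the same symbol, the difference of the two operators is a bundle endomorphism of $\mathfrak J^1_{\mathbb R}\widehat L$ annihilating all pull-back sections, hence vanishing because pull-back sections generate $\Gamma(\mathfrak J^1_{\mathbb R}\widehat L)$ as a $C^\infty(\widehat M)$-module. I expect the only genuine difficulty to be the bookkeeping of the two-step homogenization $\widetilde X\to\widehat M\to M$ and of the several fields $Z$, $\zeta$, $j_{\widetilde M}\zeta$; each individual verification is short. An alternative, purely computational route transports the statement via homogenization to the assertion that $\mathcal L_{j_{\widetilde M}\zeta}-j_{\widetilde M}^\ast$ is a Lie algebroid derivation of $(T^\ast\widetilde M)_{4\operatorname{Re}\Pi}=\operatorname{Re}(T^\ast\widetilde X)_\Pi$, which follows by taking real parts of the holomorphic derivation $\mathcal L_{iZ}-i\,\mathbb 1$ of the holomorphic cotangent algebroid $(T^\ast\widetilde X)_\Pi$ (the holomorphic analogue of Lemma \ref{lem:hP}, applied with $[iZ,\Pi]=-i\Pi$).
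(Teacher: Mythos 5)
Your proposal is correct, but it runs the argument in the opposite direction from the paper. The paper first proves that $\mathcal L_{\widehat j \mathbb 1} - \widehat j{}^\dag$ is a Lie algebroid derivation by a direct Schouten--Jacobi bracket computation: it reduces the claim to $[\mathcal L_{\widehat j \mathbb 1} - \widehat j{}^\dag, b_{\widehat J}]^{SJ} = 0$, evaluates $[\mathcal L_{\widehat j \mathbb 1}, b_{\widehat J}]^{SJ} = -b_{\widehat J_{\widehat j}}$ using $[\widehat j \mathbb 1, \widehat J]^{SJ} = -\widehat J_{\widehat j}$, and matches this against $[\widehat j{}^\dag, b_{\widehat J}]^{SJ}$ using precisely the Jacobi--Nijenhuis compatibility (\ref{eq:2Jac}); only then does it define the $U(1)$-action (the same tautological lift $\varphi.\theta_{[\epsilon]} = \theta_{\varphi.[\epsilon]}$ you use, fixing pull-back sections) and conclude that the action is by automorphisms because its generator is the derivation just analyzed. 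You instead construct the action first, prove the automorphism property by descending to $\widehat M$ the commutation of $2\operatorname{Re}\widetilde{J^\sharp\psi}$ with $j_{\widetilde M}\zeta$ (which rests on holomorphicity and homogeneity of $\widetilde{J^\sharp\psi}$ rather than on (\ref{eq:1Jac})--(\ref{eq:2Jac})), and then recover the derivation statement by identifying the generator via its symbol and its kernel (Proposition \ref{prop:hom_jet_compl}). There is no circularity, and your identification of the generator is exactly the paper's. What the paper's route buys is a self-contained computation in the Atiyah calculus that makes visible where the Jacobi--Nijenhuis axioms enter; what your route buys is that the holomorphicity of $J$ does all the work at once, at the cost of the two-step homogenization bookkeeping you acknowledge (the precise constant relating the action field $\sigma((4\widehat J)^\sharp\widehat\psi)$ to $\operatorname{Re}\widetilde{J^\sharp\psi}$ is immaterial for the invariance claim, but should be pinned down if this were written in full). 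Your closing alternative via $\mathcal L_{iZ} - i\,\mathbb 1$ on $(T^\ast\widetilde X)_\Pi$ is also sound --- it is the complexified version of Lemma \ref{lem:hP}, following from the same Leibniz identity (\ref{eq:der_bracket}) --- and is arguably the closest in spirit to the paper's computation.
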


\begin{proof}
Given any (non-necessarily Jacobi) biderivation $J$ of $\widehat L$, we denote by $b_J = [-,-]_J$ the associated skew-symmetric bracket (actually a biderivation) on $\mathfrak J^1_{\mathbb R} \widehat L$, obtained via formula:
\[
[\psi, \chi]_J = \mathcal L_{J^\sharp \psi} \chi - \mathcal L_{J^\sharp \chi} \psi - d_D \langle J, \psi \wedge \chi \rangle,
\]
$\psi, \chi \in \Gamma (\mathfrak J^1 L)$. The condition that $\mathcal L_{\widehat j \mathbb 1} - \widehat j{}^\dag$ is a Lie algebroid derivation is equivalent to
\begin{equation}\label{eq:SJ_der}
[\mathcal L_{\widehat j \mathbb 1} - \widehat j{}^\dag, b_{\widehat J}]^{SJ} = 0,
\end{equation}
where $[-,-]^{SJ}$ is the Schouten-Jacobi bracket on multiderivations of $\mathfrak J^1_{\mathbb R} \widehat L$. In order to prove (\ref{eq:SJ_der}) notice first that
\[
[\mathcal L_{\widehat j \mathbb 1}, b_{\widehat J}]^{SJ} = b_{[\widehat j \mathbb 1, \widehat J]^{SJ}} = - b_{\widehat J_{\widehat j}},
\]
where we used that $[\widehat j \mathbb 1, \widehat J]^{SJ} = - \widehat J_{\widehat j}$ (see \cite[Remark 59]{VW2016b}). So we have to prove that
\[
[\widehat j{}^\dag, b_{\widehat J}]^{SJ} = - b_{\widehat J_{\widehat j}}.
\]
But for every two sections $\psi, \chi $ of $J^1_{\mathbb R} \widehat L$ we have
\[
[\widehat j{}^\dag, b_{\widehat J}] (\psi, \chi) = \widehat j{}^\dag [\psi, \chi]_{\widehat J}-  [\widehat j{}^\dag \psi, \chi]_{\widehat J} -  [\psi, \widehat j{}^\dag \chi]_{\widehat J}
\]
and the latter agrees with $- [ \psi, \chi]_{\widehat J_{\widehat j}}$ precisely because $(\widehat L, \widehat J, \widehat j)$ is a Jacobi-Nijenhuis structure.

For the second part of the statement, let $\epsilon$ be a point in $L^\ast \smallsetminus 0$. We denote by $[\epsilon]$ its class in $\widehat M$. Recall that a section $\lambda$ of $L$ determines a section $\widehat \lambda$ of $\widehat L$ as in Section \ref{Sec:hom_trick}. The isomorphism $q^\ast \mathfrak J^1 L = \mathfrak J^1_{\mathbb R} \widehat L$ identifies pull-back sections with sections $\theta$ of $\mathfrak J^1_{\mathbb R} \widehat L$ such that
\begin{equation}\label{eq:pull}
(\mathcal L_{\widehat j \mathbb 1} - \widehat j{}^\dag) \theta = 0,
\end{equation}
(Proposition \ref{prop:hom_jet_compl}). Hence the values of these sections span all $ \mathfrak J^1_{\mathbb R} \widehat L$. This means that every point in $\mathfrak J^1_{\mathbb R} \widehat L$ is of the form $\theta_{[\epsilon]}$ for some $\theta$ such that (\ref{eq:pull}) and some $\epsilon$ in $L^\ast \smallsetminus 0$. For $\varphi \in U(1)$ we put $\varphi.\theta_{[\epsilon]} = \theta_{\varphi . [\epsilon]} = \theta_{[e^{i \varphi/2} \epsilon]}$. It is easy to see that $\varphi.\theta_{[\epsilon]}$ is well-defined, i.e.~it does only depend on $\theta_{[\epsilon]}$. So we have an $U(1)$-action on $\mathfrak J^1_{\mathbb R} \widehat L$. By construction it covers the canonical action on $\widehat M$. Finally, it immediately follows from (\ref{eq:pull}) and Remark \ref{rem:U(1)_action} that this action is generated by the linear vector field corresponding to derivation $\mathcal L_{\widehat j \mathbb 1} - \widehat j{}^\dag$. As the latter is a Lie algebroid derivation, the former is an action by Lie algebroid automorphisms.
\end{proof}

\begin{definition}
A holomorphic Jacobi manifold $(X, L, J)$ is \emph{integrable} if the holomorphic jet algebroid $(\mathfrak J^1 L)_{J}$ is integrable.
\end{definition}

We are finally in the position to prove our main result.

\begin{theorem} \label{main thm}
\quad
\begin{enumerate}
\item Let $(\mathcal G \rightrightarrows M, j_{\mathcal G}, H)$ be a holomorphic contact groupoid, and let $L_{\mathcal G}$ and $L$ be as in previous section. Then $M$ is equipped with a unique complex structure $j$ and a unique holomorphic Jacobi structure $(L, J)$, such that the natural projection $L_{\mathcal G} \to L$ is a holomorphic Jacobi map. The holomorphic Jacobi manifold $(M, j, L, J)$ is called the differentiation of $(\mathcal G, j_{\mathcal G}, H)$.
\item Conversely, let $(X = (M, j), L, J)$ be a holomorphic Jacobi manifold. Then $(X, L, J)$ is integrable iff $(\widehat M, \widehat L, 4\widehat J, \widehat j)$ (equivalently $(\widehat M, \widehat L, \widehat J, \widehat j)$) is an integrable Jacobi-Nijenhuis manifold, iff $(\widetilde X, \Omega, Z)$ is an integrable homogeneous complex symplectic manifold. In this case, let $(\mathcal G, j_{\mathcal G})$ be the source-simply connected complex groupoid integrating the holomorphic Lie algebroid $(\mathfrak J^1 L)_{J} \to X$. Then $(\mathcal G, j_{\mathcal G})$ is equipped with a unique multiplicative holomorphic contact structure $H$ such that $(X, L, J)$ differentiates $(\mathcal G, j_{\mathcal G}, H)$. The contact complex groupoid $(\mathcal G, j_{\mathcal G}, H)$ is called the intergation of $(X, L, J)$. Finally, the real contact groupoid $(\widehat G, \widehat H)$ from previous section integrates the real Jacobi manifold $(\widehat M, 4 \widehat J)$.
\end{enumerate}
Integration and differentiation establish a one-to-one correspondence between integrable holomorphic Jacobi manifolds and source-simply connected holomorphic contact groupoids. Shortly, holomorphic Jacobi manifolds integrate to holomorphic contact groupoids.
\end{theorem}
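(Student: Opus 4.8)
\medskip
\noindent\textbf{Plan of proof.}
The strategy is to carry out the complex homogenization in the two steps of Subsection~\ref{Sec:complex_case} and thereby reduce the statement to the real Jacobi--Nijenhuis integration theorem (Proposition~\ref{prop:JN}), keeping track throughout of the extra \emph{complex-structure} condition on the Nijenhuis part and of the $U(1)$-equivariance. The starting point is that the dictionary of Subsection~\ref{Sec:complex_case} --- Propositions~\ref{prop:L^hat}, \ref{prop:hom_der_compl}, \ref{prop:hom_jet_compl} and the concluding Remark of that subsection --- identifies a holomorphic Jacobi structure $(L,J)$ on $X=(M,j)$ with a Jacobi--Nijenhuis structure $(\widehat L,\widehat J,\widehat j)$ on $\widehat M$ in which the Atiyah $(1,1)$-tensor is precisely the fixed, torsion-free complex structure $\widehat j$ on $D\widehat L$; symmetrically, the last Proposition of the preceding subsection identifies a holomorphic contact groupoid $(\mathcal G,j_{\mathcal G},H)$ with a contact--Nijenhuis groupoid $(\widehat{\mathcal G},\widehat H,\widehat j_{\mathcal G})$ whose Nijenhuis tensor is the complex structure $\widehat j_{\mathcal G}$. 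Thus it remains to (i) run Proposition~\ref{prop:JN} and check that the condition ``Nijenhuis tensor $=$ complex structure'' is preserved under both differentiation and integration, and (ii) handle the descent from $\widehat M$ to $M$ and from $\widehat{\mathcal G}$ to $\mathcal G$.

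For statement (1): given $(\mathcal G,j_{\mathcal G},H)$, I would pass to the contact--Nijenhuis groupoid $(\widehat{\mathcal G},\widehat H,\widehat j_{\mathcal G})$, differentiate it via Proposition~\ref{prop:JN} to a Jacobi--Nijenhuis manifold $(\widehat M,\widehat L,\widehat J,\widehat j)$ with Nijenhuis part $\widehat j_{\mathcal G}|_{D\widehat L}=\widehat j$ a complex structure, and then exploit the $U(1)$-equivariance --- the action generated by the derivation $\widehat j\mathbb 1$ (Remark~\ref{rem:U(1)_action}) and the induced action on jets generated by $\mathcal L_{\widehat j\mathbb 1}-\widehat j{}^\dag$ (the Lemma preceding this theorem) --- to descend $(\widehat L,\widehat J,\widehat j)$ to a holomorphic Jacobi structure $(L,J)$ on a complex manifold $X=(M,j)$; that the canonical projection $L_{\mathcal G}\to L$ is a holomorphic Jacobi map follows from the real statement (part (1) of Proposition~\ref{prop:JN} together with Theorem~\ref{theor:int_Jacobi}, applied to $\widehat L_{\mathcal G}\to\widehat L$) once compatibility of all structure maps with the complex structures is checked. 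For statement (2): I would first establish the chain of integrability equivalences claimed in the theorem, namely $(\mathfrak J^1 L)_J$ integrable $\Longleftrightarrow$ its underlying real Lie algebroid $(\mathfrak J^1 L)_{J,\mathrm{Re}}$ integrable (a holomorphic Lie algebroid integrates precisely when its real Lie algebroid does) $\Longleftrightarrow$ the action algebroid $(\mathfrak J^1_{\mathbb R}\widehat L)_{4\widehat J}$ integrable (action algebroids of integrable algebroids are integrable, and conversely a $U(1)$-equivariant integration may be quotiented, by the Lemma preceding this theorem) $\Longleftrightarrow$ the cotangent algebroid $(T^\ast\widetilde X)_\Pi$ of the Poissonization integrable (Lemma~\ref{lem:Poiss} in the holomorphic setting, exactly as in Theorem~\ref{theor:Jac_hom_Poiss}). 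Assuming integrability, I would integrate $(\widehat M,\widehat L,\widehat J,\widehat j)$ via Proposition~\ref{prop:JN} to a contact--Nijenhuis groupoid $(\widehat{\mathcal G},\widehat H,\widehat{\mathcal N})$; by functoriality of differentiation and integration of Spencer operators and the groupoid analogue of Remark~\ref{rem:complex}, $\widehat{\mathcal N}$ is again a complex structure, the $U(1)$-action integrates to $\widehat{\mathcal G}$ by Lie groupoid automorphisms, and the quotient $\mathcal G:=\widehat{\mathcal G}/U(1)$ is the source-simply connected complex Lie groupoid with Lie algebroid $(\mathfrak J^1 L)_J$, $\widehat{\mathcal N}$ being the homogenization of the multiplicative complex structure $j_{\mathcal G}$; reversing the two-step homogenization then yields a holomorphic contact structure $H$ on $\mathcal G$, multiplicative by Lemma~\ref{lem:mult_H}. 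That differentiation and integration are mutually inverse follows from the uniqueness clauses of Proposition~\ref{prop:JN}, the uniqueness of source-simply connected integrations, and the bijectivity of each intermediate identification (the hat construction, the quotient by $U(1)$, and de-homogenization); the equivalence with integrability of $(\widetilde X,\Omega,Z)$ drops out of Proposition~\ref{prop:hcsg} applied to the Poissonization.

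The main obstacle I anticipate is step (ii): making rigorous the passage between the $U(1)$-equivariant \emph{real} picture on $\widehat{\mathcal G}\rightrightarrows\widehat M$ and the \emph{holomorphic} picture on $\mathcal G\rightrightarrows X$, both infinitesimally and globally. Concretely, one has to verify that the free and proper $U(1)$-action on $(\mathfrak J^1_{\mathbb R}\widehat L)_{4\widehat J}$ integrates to a free and proper action by Lie groupoid automorphisms on the source-simply connected $\widehat{\mathcal G}$, that the quotient $\mathcal G$ is again source-simply connected with Lie algebroid $(\mathfrak J^1 L)_{J,\mathrm{Re}}$, and that the almost complex structure $j_{\mathcal G}$ on $\mathcal G$ obtained by de-homogenizing $\widehat{\mathcal N}$ is integrable and multiplicative, so that $\mathcal G\rightrightarrows X$ is a genuine complex Lie groupoid whose holomorphic Lie algebroid is $(\mathfrak J^1 L)_J$. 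Once this ``$U(1)$-equivariant real data versus holomorphic data'' equivalence is established --- the groupoid-level counterpart of Propositions~\ref{prop:L^hat}, \ref{prop:hom_der_compl} and \ref{prop:hom_jet_compl} --- the theorem follows by assembling Propositions~\ref{prop:JN} and~\ref{prop:hcsg} with the homogenization dictionary of Section~\ref{Sec:hom_trick}.
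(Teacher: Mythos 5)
Your proposal reaches the theorem, but it routes the core construction differently from the paper. You stay at the intermediate $U(1)$-level throughout: identify $(X,L,J)$ with the Jacobi--Nijenhuis manifold $(\widehat M,\widehat L,\widehat J,\widehat j)$, integrate it with Proposition \ref{prop:JN} to a contact--Nijenhuis groupoid $(\widehat{\mathcal G},\widehat H,\widehat{\mathcal N})$, verify $\widehat{\mathcal N}{}^2=-\mathbb 1$, and then descend to $\mathcal G=\widehat{\mathcal G}/U(1)$. The paper instead performs the full $\mathbb C^\times$-homogenization in one step: it forms the complex symplectization $(\widetilde{\mathcal G},\Omega,\mathcal Z)$ (respectively, the complex homogenization of the LB-groupoid $t^\ast L\to\mathcal G$) and invokes Proposition \ref{prop:hcsg}, in which the ``Nijenhuis tensor is a complex structure'' bookkeeping has already been absorbed via Remark \ref{rem:complex} and Proposition \ref{prop:holP}; the $U(1)$-picture is used only for the chain of integrability equivalences (the $U(1)$-quotient of algebroids and groupoids), exactly as in your plan. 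What the paper's route buys is that the de-homogenization from $\widetilde{\mathcal G}$ back to $(\mathcal G,j_{\mathcal G},H)$ is precisely the symplectization correspondence for holomorphic contact structures already set up in the subsection on holomorphic contact groupoids, together with Proposition \ref{prop:mult_Atiyah} and Lemma \ref{lem:mult_H}, so no new reconstruction lemma is needed.

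That reconstruction is exactly where your plan is incomplete, and you correctly flag it as the main obstacle. The paper proves the forward implication (a holomorphic contact groupoid yields a $U(1)$-equivariant contact--Nijenhuis groupoid $(\widehat{\mathcal G},\widehat H,\widehat j_{\mathcal G})$), but not the converse your route requires: that a source-simply connected contact--Nijenhuis groupoid whose Nijenhuis tensor is an integrable complex structure on $D\widehat L_{\mathcal G}$, equivariant under the $U(1)$-action generated by $\widehat j_{\mathcal G}\mathbb 1$, descends to a complex Lie groupoid $\mathcal G\rightrightarrows X$ carrying a multiplicative \emph{holomorphic} contact structure. Concretely one must check that the quotient is again source-simply connected (this does follow, since $\widehat{\mathcal G}$ is an action groupoid over $\mathcal G$, so their source fibers coincide), that $\widehat j_{\mathcal G}$ encodes a holomorphic line bundle structure on $\nu(H)\to\mathcal G$ and hence a multiplicative complex structure $j_{\mathcal G}$ on the base, and that $\widehat H$ is the induced real contact lift of a genuinely holomorphic contact distribution $H$. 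None of this is automatic from Propositions \ref{prop:L^hat}--\ref{prop:hom_jet_compl}, which are stated over a fixed complex manifold rather than reconstructing one. If you wish to keep your route you should prove this groupoid-level reconstruction explicitly; otherwise the cleaner path is the paper's, namely to bypass the two-step descent and work with the $\mathbb C^\times$-homogenization and Proposition \ref{prop:hcsg}.
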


\begin{proof}
Let $(\mathcal G \rightrightarrows X, H)$ be a holomorphic contact groupoid, $X = (M, j)$. The (complex) symplectization $(\widetilde{\mathcal G} \rightrightarrows \widetilde X, \Omega, \mathcal Z)$ of $(\mathcal G, H)$ is a complex homogeneous symplectic groupoid. As usual, the holomorphic homogeneity vector field $\mathcal Z$ comes from a structure of principal $\mathbb C^\times$-bundle, in the category of Lie groupoids:
\[
\begin{array}{c}
\xymatrix{ \widetilde{\mathcal G} \ar@<-2pt>[r] \ar@<+2pt>[r] \ar[d] & \widetilde X \ar[d]\\
                \mathcal G \ar@<-2pt>[r] \ar@<+2pt>[r] & X}
\end{array},
\]
i.e.~$\mathbb C^\times$ acts freely and properly by Lie groupoid automorphisms. From
Proposition \ref{prop:hcsg}, $\widetilde X$ is canonically equipped with a holomorphic homogeneous Poisson structure $(\Pi, Z)$ where $Z$ is the Euler vector field. So $X$ is equipped with a unique holomorphic Jacobi structure $(L, J)$ with the desired properties, and $(\widetilde{\mathcal G}, \operatorname{Re} \Omega, 2 \operatorname{Re} Z)$ is the integration of the induced Jacobi-Nijenhuis manifold $(\widehat M, \widehat L, 4\widehat J, \widehat j)$.

Conversely, let $(X = (M, j), L, J)$ be a holomorphic Jacobi manifold, and let $(\widetilde X, \Pi, Z)$ be 
its Poissonization. The latter is a homogeneous holomorphic Poisson manifold. The holomorphic 
cotangent algebroid $(T^\ast \widetilde X)_\Pi$ is integrable iff its real Lie algebroid $(T^\ast \widetilde 
X)_{4 \operatorname{Re} \Pi}$ is integrable iff the jet algebroid $(\mathfrak J^1_{\mathbb R} \widehat 
L)_{4\widehat J}$ is integrable. Last claim follows from the fact that $(\widetilde X, 4\operatorname{Re} 
\Pi, 2 \operatorname{Re} Z)$ is the homogenization of $(\widehat M, 4\widehat J)$.
 Now suppose that $(\mathfrak J^1 L)_J$ is integrable. As $(\mathfrak J^1_{\mathbb R} \widehat 
 L)_{4\widehat J}$ is the action groupoid corresponding to the action of $(\mathfrak J^1 L)_{J, \operatorname{Re}}$ on $\widehat 
 M$, it follows that it is integrated by the action groupoid. Conversely, let $(\mathfrak J^1_{\mathbb R} 
 \widehat L)_{4\widehat J}$ integrate to the source symply connected Lie groupoid $\widehat{\mathcal G}
 $. Then the action of $U(1)$ on $(\mathfrak J^1_{\mathbb R} \widehat L)_{4\widehat J}$ by Lie algebroid 
 automorphisms, integrates to a free and proper action on $\widehat{\mathcal G}$ by Lie groupoid 
 automorphisms. As $(\mathfrak J^1 L)_{J, \operatorname{Re}}$ is the quotient Lie algebroid $(\mathfrak J^1_{\mathbb R} 
 \widehat L)_{4\widehat J} / U(1)$, then it is integrated by the quotient Lie groupoid $\widehat{\mathcal G} / 
 U(1)$.

Finally, suppose $(\mathfrak J^1 L)_J$ is integrable and let $\mathcal G$ be the source-symply connected complex Lie groupoid integrating it. The action of $(\mathfrak J^1 L)_J$ on $L$ integrates to an action of $\mathcal G$ on $L$, hence on $\widetilde X$. Consider the trivial side bundle complex LB-groupoid $(t^\ast L \rightrightarrows 0_X; \mathcal G \rightrightarrows X)$ corresponding to this action. The complex homogenization $\widetilde{ \mathcal G}$ of the holomorphic line bundle $t^\ast L \to \mathcal G$  is the source symply connected complex Lie groupoid integrating $(T^\ast X)_\Pi$. In particular, from Proposition \ref{prop:hcsg} again, $\widetilde{ \mathcal G}$ is equipped with a canonical multiplicative homogeneous symplectic structure $(\Omega, \mathcal Z)$, where $\mathcal Z$ is the Euler vector field. It follows that $\mathcal G$ is equipped with a unique multiplicative holomorphic contact structure $H$ with all the desired properties, in particular $T^{1,0}\mathcal G / H \simeq t^\ast L$.
\end{proof}

\color{black}

\begin{remark}
A \emph{contact realization} of a Jacobi manifold $(M, L, J)$ is a contact manifold $(P, H)$, together with a surjective and submersive Jacobi map $P \to M$ \cite{ZZ2006}. Similarly, one can define a \emph{holomorphic contact realization} of a holomorphic Jacobi manifold $(X, L, J)$ as a holomorphic contact manifold $(P, H)$ together with a surjective and submersive holomorphic Jacobi map $P \to M$. Theorem \ref{main thm} then immediately implies that integrable holomorphic Jacobi manifolds possess holomorphic contact realizations. It is now natural to wonder whether non-necessarily integrable holomorphic Jacobi manifolds possess holomorphic contact realizations or not. The analogous question for holomorphic Poisson manifolds, and (real) Jacobi manifolds, has been answered in the positive in terms of local Lie groupoids in \cite{LSX2009,BX2015} and \cite{CMS2017a,CMS2017b} respectively. We believe that a similar strategy as that of \cite{BX2015} can be used to prove that holomorphic Jacobi manifolds do always possess holomorphic contact realizations, and to provide an explicit construction. Probably, there is a shorter proof using the construction in \cite[Theorem A]{BX2015} and the homogenization scheme. But this would require a careful analysis of the interaction between that construction and the homogenization scheme in the holomorphic setting (Section \ref{Sec:complex_case}). Such an analysis goes beyond the scopes of the present paper and will be hopefully undertaken in a subsequent work.
\end{remark}

\color{black}

\emph{Acknowledgements.} We thank Alfonso G. Tortorella for providing a proof of Proposition \ref{prop:mult_Atiyah}. We also thank Damien Broka, Mathieu Sti\'enon, and Ping Xu for useful discussions. L.~V.~is member of the GNSAGA, INdAM.

\end{document}